\def\@settitle{\begin{center}%
		\baselineskip14\p@\relax
		\normalfont\LARGE\bfseries
		\@title
	\end{center}%
}
\def\subsection{\@startsection{subsection}{2}%
	\z@{.5\linespacing\@plus.7\linespacing}{.5\linespacing}%
	{\normalfont\bfseries}}
\def\subsubsection{\@startsection{subsubsection}{3}%
	\z@{.5\linespacing\@plus.7\linespacing}{.5\linespacing}%
	{\normalfont\itshape}}
\definecolor{darkblue}{rgb}{0.0, 0.0, 0.45}
\date{\today}
\theoremstyle{plain}
\newtheorem{Thm}{Theorem}[section]
\newtheorem*{TThm}{Theorem}
\newtheorem{Prop}[Thm]{Proposition}
\newtheorem{Lem}[Thm]{Lemma}
\newtheorem{Cor}[Thm]{Corollary}
\newtheorem{Rem}[Thm]{Remark}
\newcommand{\R}{\mathbb{R}}
\newcommand{\N}{\mathbb{N}}
\newcommand{\Let}{\coloneqq}
\newcommand{\diff}{\mathrm{d}}
\newcommand{\ol}[1]{\overline{#1}}
\DeclareMathOperator{\e}{e}
\DeclareMathOperator{\esup}{ess\text{ }sup}
\DeclareMathOperator*{\argmax}{\arg\!\max}
\DeclareMathOperator{\sinc}{sinc}
\def\br{\bar{\rho}}
\title[]{Macroscopic Noisy Bounded Confidence Models \\ with Distributed Radical Opinions}
\author[]{M. A. S. Kolarijani, A. V. Proskurnikov, \lowercase{and} P. Mohajerin Esfahani}
\thanks{M.A.S. Kolarijani and P. Mohajerin Esfahani are with Delft Center for Systems and Control, Delft University of Technology, 
Delft, The Netherlands. Email: \texttt{\{M.A.SharifiKolarijani, P.MohajerinEsfahani\}@tudelft.nl}.}
\thanks{A.V. Proskurnikov is with Department of Electronics and Telecommunications, Politecnico di Torino, Turin, Italy 
and also with the Institute for Problems of Mechanical Engineering, Russian Academy of Sciences (IPME RAS). Email: \texttt{anton.p.1982@ieee.org}.}
\begin{document}

\begin{abstract}
In this article, we study the nonlinear Fokker-Planck (FP) equation that arises as a mean-field (macroscopic) approximation of bounded confidence opinion dynamics, where opinions are influenced by environmental noises and opinions
of radicals (stubborn individuals). 
The distribution of radical opinions serves as an infinite-dimensional exogenous input to the FP~equation, visibly influencing the steady opinion profile. We establish mathematical properties of the FP~equation. 
In particular, we (i)~show the well-posedness of the dynamic equation, (ii)~provide existence result accompanied by a quantitative global estimate for the corresponding stationary solution, and (iii)~establish an explicit lower bound on the noise level that guarantees exponential convergence of the dynamics to stationary state. 
Combining the results in (ii) and (iii) readily yields the \emph{input-output stability} of the system for sufficiently large noises. 

Next, using Fourier analysis, the structure of opinion clusters under the uniform initial distribution is examined. 
Specifically, two numerical schemes for identification of order-disorder transition and characterization of initial clustering behavior are provided. 
The results of analysis are validated through several numerical simulations of the continuum-agent model (partial differential equation) and the corresponding discrete-agent model (interacting stochastic differential equations) for a particular distribution of radicals.
\end{abstract}

\maketitle

\section{Introduction}
\label{sec:intro}

Recent decades have witnessed enormous progress in study of complex systems and their system-theoretic
properties~\cite{BoccalettiReview:2006,LiuBarabasi:2016}. The main effort has been invested
into study of ``self-organization'' and ``spontaneous order'' phenomena~\cite{StrogatzSync} that have inspired
the development of synchronization and consensus theory~\cite{Wu2007,MesbahiEgerBook}. Paradoxically,
these regular behaviors arising from local interactions between subsystems (agents, nodes)
of a complex system are studied much better than various ``irregular'' dynamic effects such as persistent disagreement and clustering, exhibited by many real-world systems. Although some culprits of this asynchrony and dissent (e.g. symmetries and other special structures in the coupling mechanisms, exogenous forces acting on some nodes, heterogeneous
dynamics of nodes, etc.) have been revealed in the literature~\cite{Pogromsky:02,WuZhouChen:09,XiaCao:11,Pecora:14,LuLiBai2017}, only a few mathematical models have been proposed that are sufficiently ``rich'' to capture the diversity
of clustering behaviors in real-world networks and, at the same time, admit rigorous analysis. Long before the
recent ``boom'' in complex systems, the lack of such models was realized in mathematical sociology.
The problem of disclosing mechanisms preventing consensus and maintaining enduring disagreement between
individuals~\cite{Abelson:1964} is nowadays referred to as the community cleavage problem or Abelson's diversity
puzzle~\cite{Nakamura:16,Friedkin:2015}. 
The interdisciplinary area of sociodynamical modeling~\cite{Castellano:2009,XiaWangXuan:2011,AcemogluOzdaglar:2011,Friedkin:2015,ProTempo:2017-1,ProTempo:2018,ProRavazzi:2018,Mastroeni:2019}
has attracted enormous attention of the research community and is primarily concerned with
mechanisms of \emph{opinion formation} under social influence.

Only few models, proposed in the literature to describe opinion formation processes, have been
secured by experimental evidence. Such models, however, play an important role and contribute, in various aspects,
in comprehending complex systems' behaviors such as birth, death and evolution of clusters
in systems of interacting particles, and in developing algorithms for control of these behaviors.
This explains explosion of interest in models of ``opinion formation'' in systems
and control literature. From the control-theoretic prospect, most of these models are simply networks of
interacting agents, obeying the first-order integrator model. However, the term ``opinion'' is now widespread and used
to denote the scalar or multi-dimensional state of an agent, even if this state does not have a clear sociological
interpretation\footnote{From the sociological viewpoint, opinions are cognitive orientations of individuals towards some objects or topics~\cite{Friedkin:2015}.} (belonging, e.g., to an abstract manifold~\cite{Aydogdu:2017}). The opinion is thus
some value of interest, held by an agent and updated, based on displayed opinions of the other agents.

Nowadays, linear models of opinion dynamics, extending the classical French-DeGroot system in various directions (allowing, e.g., stubborn agents, asynchronous interactions and repulsion of opinions~\cite{Acemoglu:2013,Friedkin:2015,ProTempo:2017-1,LiuChenBasar:2017}) have been thoroughly studied.
These models are sufficient to explain consensus and disagreement in social groups, as well as formation of special opinion profiles (e.g., bimodal distributions, standing for opinion polarization), however, general mechanisms leading to emergence and destruction of unequal clusters are still far from being well understood. To explain them, more complicated \emph{nonlinear} models have been proposed, mimicking some important features of social influence. One feature observed in social and biological systems is the \emph{homophily}~\cite{McPherson:2001}, or tendency of individuals to bond with similar ones. Homophily is related to \emph{biased assimilation}~\cite{LordRoss:1979} effects: individuals readily accept opinions consistent with their views and
tend to dismiss and discount opinions contradicting to their own views. Mathematically, coupling between close opinions
is stronger than that of distant opinions, which is modeled by introducing \emph{opinion-dependent} influence
weights. Although the possibility of such nonlinearities in opinion dynamics models was mentioned in
the pioneering work~\cite{Abelson:1964}, substantial progress has been primarily achieved in analysis of
\emph{bounded confidence} models proposed several decades later as extensions of the deterministic~\cite{Krause} and
randomized gossip-based~\cite{Deffuant00} consensus algorithms for multi-agent networks. Bounded confidence models
stipulate that a social actor is insensitive to opinions beyond its bounded \emph{confidence set} (usually, this set is
an open or closed ball, centered at the actor's own opinion), which makes the graph of interactions among the agents
\emph{distance-dependent}. A detailed survey of bounded confidence models and relevant mathematical results can be found in~\cite{ProTempo:2018}. Bounded confidence models exhibit convergence of the opinions to some steady values, which can reach consensus or split into several disjoint clusters. 
If the state-dependent interaction graph of the system is symmetric, this follows from general properties of iterative averaging procedures, and can alternatively be proved by exploring a special Lyapunov function (``kinetic energy'')~\cite{MotschTadmor:2013,ProTempo:2018,Etesami:2019}.
In the general case of \emph{asymmetric} interaction graphs, such a convergence has been proved only in special situations~\cite{Chazelle17,Etesami:2019},
but seems to be a generic behavior~\cite{MirtaBullo:2012,EtesamiBasar:2015,Chazelle17}.

Opinions in real social groups, however, usually do not terminate at steady values yet oscillate, which is usually explained by two factors. The first reason explaining opinion fluctuation is exogenous influence, which can be interpreted as some ``truth'' available to some individuals~\cite{HegselmannKrause:2006} or a position shared by a group of close-minded opinion leaders or stubborn individulas (``radicals'')~\cite{HegselmannKrause:2015,ZhaoZhang:16,Porfiri07}.
Important results on stability of the HK model with radicals and more general ``inertial'' bounded confidence models were obtained in~\cite{Chazelle17}.
Typically, the exogenous signal is supposed to change slowly compared to the opinion evolution and is thus replaced by a constant; the main concern is the dependence between
the constant input and the resulting opinion profile. Numerical results, reported in~\cite{HegselmannKrause:2015,ZhaoZhang:16}
demonstrate high sensitivity of the opinion clusters to the radical's opinion and reveal some
counter-intuitive effects, e.g., an increase in the number of radicals sometimes \emph{decreases} the number of their followers. The second culprit of persistent opinion fluctuation is uncertainty in the opinion dynamics,
usually modeled as a random drift of each opinion.
The presence of a random excitation can be interpreted as ``free will'' and unpredictability of a human's decision~\cite{Pineda09}; besides this, randomized opinion dynamics models
are broadly adopted in statistical physics~\cite{BenNaim:2003,Grauwin12,Pineda13,CarroToral:13}
to study phase transitions in systems of interacting particles.

Even for the classical models from~\cite{Krause,Deffuant00}, disclosing the relation between the initial and the terminal opinion profiles
remains a challenging problem (including, e.g., the $2R$-conjecture~\cite{Blondel:2007,Wang17}).
In presence of noise, the analysis becomes even more difficult; some progress in the study of the interplay between
confidence range and noise level has been achieved in recent works~\cite{HuangMantion:2013,SuChenHong:17}.
One of the important directions in analysis of bounded confidence models is examination of their
\emph{asymptotic} properties as the number of social actors becomes very large $N\to\infty$ and
their individual opinions are replaced by infinitesimal ``elements''. The arising \emph{macroscopic} approximations
of agent-based models describe the evolution of the distribution of opinion (usually supposed to have a density)
and are referred to as density-based~\cite{LorenzSurvey:2007}, continuum-agent~\cite{Blondel:2010,HendrickxOlshevsky:16},
Eulerian~\cite{MirtaJiaBullo:2014,CanutoFagnani:2012}, kinetic~\cite{BoudinSalvarani:2016}, hydrodynamical~\cite{MotschTadmor:2013} or
mean-field~\cite{Wang17,Nordio18} models of opinion formation. In the continuous-time situation, the density obeys a nonlinear \emph{Fokker-Planck} (FP) equation.
To study clustering behavior of the macroscopic bounded confidence models, efficient numerical methods have been
proposed that are based on Fourier analysis~\cite{Pineda13,Garnier17,Wang17}.

From practical viewpoint, it is convenient to consider opinions staying in a predefined interval, e.g., $[0,1]$. The
HK and Deffuant-Weisbuch (DW) models, as well as their continuous-time counterparts~\cite{ProTempo:2018},
imply that starting within the interval, opinions never escape from it. This property, however, is destroyed
by arbitrarily small noises. To keep the opinions bounded, some ``boundary conditions'' are usually introduced.
The \emph{absorbing} boundary condition assumes that the opinions are saturated at the
extreme values $0$ and $1$~\cite{Pineda13,SuChenHong:17}; an important result from~\cite{SuChenHong:17} demonstrates
that arbitrarily small noises in this situation destroy clusters and lead to approximate consensus (the maximal deviation
of opinions is proportional to the noise level). More interesting are opinion dynamics with the \emph{periodic}
boundary condition, wrapping the interval $[0,1]$ into a circle. The opinion density on the circle corresponds to
a \emph{$1$-periodic} solution of the FP equation on the real line~\cite{Garnier17,Chazelle15,Wang17}.
A disadvantage of the simple periodic boundary condition is the merging of two extreme opinion values $0$ and $1$. To distinguish between these extreme opinions, we incorporate an ``almost reflective'' (precisely, an \emph{even $2$-periodic}) boundary condition. Dealing with the macroscopic FP equation, the opinion density is then conveniently represented
by an even $2$-periodic solution on the real line. This paper is primarily concerned with mathematical properties
of such solutions.

\textbf{Main Contributions.} In this paper, we advance the theory of macroscopic modeling of bounded confidence dynamics.
We consider a bounded confidence model with environmental noise which also includes radical opinions, which are not concentrated
at a single point (as in~\cite{MirtaJiaBullo:2014,HegselmannKrause:2015,HegselmannKrause:2006}) but rather distributed
over the interval $[0,1]$. The FP equation acquires an (infinite-dimensional) exogenous \emph{input}, describing the
density and total mass of the radical opinions. 
This setup allows us to consider the interplay between the noise and the distributed radicals concerning the behavior of the system. 
In particular, for the macroscopic FP equation,
\begin{itemize}
\item[(i)] the criteria for the existence, uniqueness, and regularity
of an even periodic solution are establish (Theorem~\ref{wellposed});
\item[(ii)] the existence of stationary solution is studied and a global estimate is provided that bounds the deviation of the stationary state from the uniform distribution (Theorem~\ref{StatinarySolution});
\item[(iii)] a sufficient condition is presented for exponential convergence of the dynamics to stationary state (Theorem~\ref{StabilityStatinarySolution}). Combining this result with the global estimate of item (ii) renders input-output stability of the system (Corollary~\ref{Long-term behavior}). 
\end{itemize}
Developing ideas from~\cite{Pineda13,Garnier17,Wang17}, we then use
Fourier analysis to characterize the clustering behavior of the system under the uniform initial distribution. Specifically, two numerical schemes are presented to analyze
\begin{itemize}
\item[(iv)] the interplay between the relative number (mass) of radical agents (with respect to normal agents) and the critical noise level for order-disorder transition (Section~\ref{sec: order-disorder approx}), and
\item[(v)] the impact of the noise and the radical opinions density on the number and timing of the initial clustering behavior (Section~\ref{sec: initial cluster approx}).
\end{itemize}
These schemes are then validated through several numerical simulations of the large-scale agent-based and macroscopic density-based models for a particular distributions of radical opinions.

The paper in organized as follows. 
Section~\ref{sec:model and results} introduces the macroscopic opinion dynamics model
in question. 
Here, we also present our main theoretical results regarding well-posedness and stability of the model. 
The next two sections are concerned with technical proofs of these results. 
Section~\ref{sec:wellposed} is devoted to the proofs of well-posedness of the dynamics. 
In Section~\ref{sec:stationary and stability}, we examine the properties of the corresponding stationary equation and provide the technical proofs for theoretical results on stability of stationary state. 
In Section~\ref{sec: Fourier}, two numerical schemes are presented for characterization of the clustering behavior of the model. 
These general schemes are then tested in Section~\ref{sec:NumSim} for a particular distribution of the radical opinions.
These results are accompanied by numerical simulations of both the agent-based and the macroscopic models. 
Section~\ref{sec:conclusion} concludes the paper.

\textbf{Notations.} The convolution of two functions $f$ and $g$ is denoted by $f \star g = \int f(x) \ g(y-x) \ \diff y$. 
We note that in our case one of the functions has a compact support, so the integral always exists.
For a function $f(t,x)$ we use $f_x$ (respectively, $f_t$) to denote the derivatives with respect to (w.r.t.) $x$ (respectively, $t$), so that $f_{xx}$ is the second partial derivative w.r.t. $x$. 
We also use the notation $\partial^i_{x} f$ for the $i$-th order derivative w.r.t. $x$.
Let $X = [0,1]$ and $\tilde{X} = [-1,1]$. We use $\mathcal{P}(X)$ to denote the the space of probability densities on $X$. That is, $\rho \in \mathcal{P}(X)$ if $\int_X \rho(x) \ \diff x=1$ and $\rho(x) \geq 0$ for all $x \in X$. 
We also use $\mathcal{P}_{e}(\tilde{X})$ to denote the space of probability densities on $X$, extended evenly to $\tilde{X}$. That is, $\mathcal{P}_{e}(\tilde{X})$ is the space of all functions $\rho: \tilde{X} \rightarrow [0,\infty)$ such that $\int_X \rho(x) \ \diff x=1$  and $\rho(x) = \rho(-x) \geq 0$ for all $x\in \tilde{X}$.
$L^p (\tilde{X})$ denotes the Banach space of all measurable functions $f:\tilde{X} \rightarrow \R$ for which $\Vert f \Vert_{L^p (\tilde{X})} < \infty$. 
$H^k(\tilde{X})$ for $k \in \N$ is used to denote the Sobolev space $W^{k,2}(\tilde{X})$. 
We use the subscripts \emph{per} (respectively, \emph{ep}) to denote the closed subspace of \textit{periodic} (respectively, \textit{even periodic}) functions in the corresponding function space.
We denote the dual space of $H^1_{per}(\tilde{X})$ by $H^{-1}_{per}(\tilde{X})$ and we use $\langle\cdot,\cdot\rangle$ to denote the corresponding paring of $H^1_{per}(\tilde{X})$ and $H^{-1}_{per}(\tilde{X})$. 
We use $\rightarrow$ and $\rightharpoonup$ to denote strong and weak convergences, respectively, in an appropriate Banach space. 
A brief overview of function spaces relevant to this study is provided in Appendix~\ref{Append: prelimin}. 

\section{Model Description and Main Theoretical Results}
\label{sec:model and results}

\subsection{Macroscopic Model of Opinion Formation}
\label{sec:model}

The conventional bounded confidence model describes opinion formation process in a network of $N>1$ agents.
All agents have the same \emph{confidence range} $R>0$. Agent $i$'s opinion at time $t\ge 0$, denoted by $x_i(t)\in\mathbb{R}$,
is (directly) influenced only by the opinions of agents $j$, such that $|x_j(t)-x_j(t)|\le R$.
One of the simplest continuous-time bounded confidence models is as follows~\cite{MotschTadmor:2013}
\begin{equation}\label{eq.classical}
\dot x_i(t)=\frac{1}{N}\sum_{j=1}^Nw\big(x_j(t)-x_i(t)\big),\quad
w(\xi)=\begin{cases} \xi,\,|\xi|\le R\\0,\,|\xi|>R.\end{cases}
\end{equation}
It can be shown~\cite{ProTempo:2018} that the opinions obeying the model~\eqref{eq.classical} always converge
$x_i(t) \to x_i^{s}$ as $t\to\infty$, with $w(x_i^{s}-x_j^{s})=0$ for all $i,j$.
This corresponds to either consensus ($x_i^{s}=x_j^{s}$ for all $i,j$) of the terminal opinions or their splitting into clusters, comprising one or several coincident opinions. In the latter situation, the distance between
each two clusters is greater than $R$.

Dynamics of real opinions (as well as physical processes, portrayed by ``opinion dynamics'' models) often do not exhibit convergence to steady values, and the fluctuation of opinions persists. In order to capture this effect, random uncertainties can be introduced into the model mimicking ``free will'' and unpredictability of a human's decision~\cite{Pineda09}. The simplest of these uncertainties is an additive random noise. The model~\eqref{eq.classical} is then replaced by the system of nonlinear SDE
\begin{equation}\label{eq.classical-sde}
\diff x_i(t)=\frac{1}{N}\sum_{j=1}^Nw\big(x_j(t)-x_i(t)\big) \diff t+\sigma \diff W_i(t),
\end{equation}
where $W_i$ are independent standard Wiener processes and $\sigma>0$ characterizes the noise level.

Since the dynamics of a stochastic system~\eqref{eq.classical-sde} becomes quite complicated as the number of agents grows, the
standard approach to examine it is the \emph{mean-field} (or macroscopic) approximation, considering the opinion profile $(x_i(t))_{i=1}^N$ as a \emph{random sampling} drawn from some (time-varying) probability distribution of the opinion. Precisely, it can be shown~\cite{Dawson83,Oelschlager84,Gartner88} that empirical distributions $N^{-1}\sum_{i=1}^N\delta_{x_i(t)}$ converge (in the weak sense) as $N\to\infty$ to a distribution, whose density $\rho(t,x)$ obeys the FP equation
\begin{eqnarray} \label{pde1+}
\begin{array}{l}	
\rho_t = \big[\rho \ (w \star \rho )\big]_x +\frac{\sigma^2}{2} \rho_{xx},\quad t\ge 0, \ x\in\mathbb{R}.
\end{array}
\end{eqnarray}

A natural extension of the bounded confidence dynamics allows the presence of $N_r\ge 1$ \emph{radicals} (stubborn agents, zealots) that do not assimilate others' opinions, however, influence them directly or indirectly. 
Typically, the radicals' opinions are supposed to be constant (or changing very slowly compared to the opinion formation of ``normal'' agents).
Indexing the ``normal'' individuals $1$ through $N$ and the radicals $(N+1)$ through $N+N_r$, the opinion dynamics becomes as follows
\begin{equation}\label{eq.classical-sde1}
\begin{aligned}
&\diff x_i(t)=\frac{1}{N}\sum_{j=1}^{N+N_r}w\big(x_j(t)-x_i(t)\big)\diff t+\sigma \diff W_i(t),\quad i=1,\ldots,N\\
&\dot x_i(t)=0,\quad i=N+1,\ldots,N+N_r.
\end{aligned}
\end{equation}
Often it is supposed that the radicals share a common opinion $x_i\equiv T$ for $i=N+1,\ldots,N+N_r$, which may also be
considered as some ``truth'' perceived by some individuals~\cite{HegselmannKrause:2006} or, more generally, an exogenous signal~\cite{HegselmannKrause:2015}. The ratio $M=N_r/N$ can be treated as the relative ``weight'' or ``strength'' of this external opinion. More generally, one can assume that the radicals' opinions are spread over $\mathbb{R}$. Supposing that
$N,N_r\to\infty$, the relative mass of the radicals $M$ remains constant, and their empirical distribution
$N_r^{-1}\sum_{i=1}^{N_r}\delta_{x_{N+i}}$ converges (in the weak sense) to a distribution with sufficiently smooth density $\rho_r$, the density of the ``normal'' opinions obeys the modified FP equation as follows
\begin{eqnarray} \label{pde1++}
\begin{array}{l}	
\rho_t = \big[\rho \ (w \star (\rho +M\rho_r))\big]_x + \frac{\sigma^2}{2} \rho_{xx},\quad t\ge 0,\ x\in\mathbb{R}.
\end{array}
\end{eqnarray}

Note that the classical bounded confidence dynamics~\eqref{eq.classical}, being a special case of continuous-time consensus protocol, has an important property: the minimal and maximal opinions $\min_ix_i(t)$ and $\max_ix_i(t)$ are, respectively,
non-decreasing and non-increasing. 
In particular, if the initial opinions are confined to some predefined interval, e.g., $x_i(0)\in [0,1]$, then one has $x_i(t)\in [0,1]$ for all $t\ge 0$. 
The additive noise leads to random drift of the opinion profile, thus destroying the latter important property. 
Since in practice bounded ranges of opinions are usually considered, the dynamics~\eqref{eq.classical-sde},~\eqref{eq.classical-sde1} are usually complemented by \emph{boundary conditions}~\cite{Pineda13}, preventing the opinions from escaping from the predefined range. 

A typical boundary condition is the \emph{periodic} condition, where the opinion domain $[0,1]$ is wrapped on a circle of circumference $1$ 
(formally, replacing a real opinion value $x\in\mathbb{R}$ by its fractional part $\{x\}=x-\lfloor x\rfloor=x\mod 1$). 
A disadvantage of the periodic boundary condition is that there is no distinction between the extreme opinions $0$ and $1$. 
In this paper, we address this issue by considering another type of boundary condition, which we call \emph{even $2$-periodic}. Precisely, a real
opinion $x\in\mathbb{R}$ is replaced by $f(x)$, where $f$ is an even $2$-periodic function, such that
$f(x)=x$ on $[0,1]$ (and hence $f(x)=-x$ for $x\in[-1,0]$, $f(x)=2-x$ for $x\in[1,2]$ and so on). 
In other words, we first \emph{evenly} extend the opinion domain $[0,1]$ into the interval $[-1,1]$ and then wrap it on a circle of circumference $2$ so that the extreme opinions $0$ and $1$ correspond to the antipodes of this circle.
We note that with this even $2$-periodic extension, the ``effective'' boundary condition experienced by the agents is an \emph{almost reflective} one, that is, when an agent leaves the opinion domain from one end, it is reflected back into the domain from the same end. 
This is different from the behavior under simple periodic boundary condition where the agents leaving the domain form one end, enter the domain from the other end. 
However, the introduced boundary condition is ``almost'' reflective since the even extension causes some \emph{boundary effects}: the influence of more extreme neighbors of opinion values in the $R$-neighborhood of extreme opinions~$0$ and $1$ is reinforced. 
This is due to the even extension which introduces more extreme ``artificial'' neighbors; see Fig.~\ref{boundary_effect}. 

\begin{figure}[t]
\centering
\includegraphics[width=3.5in]{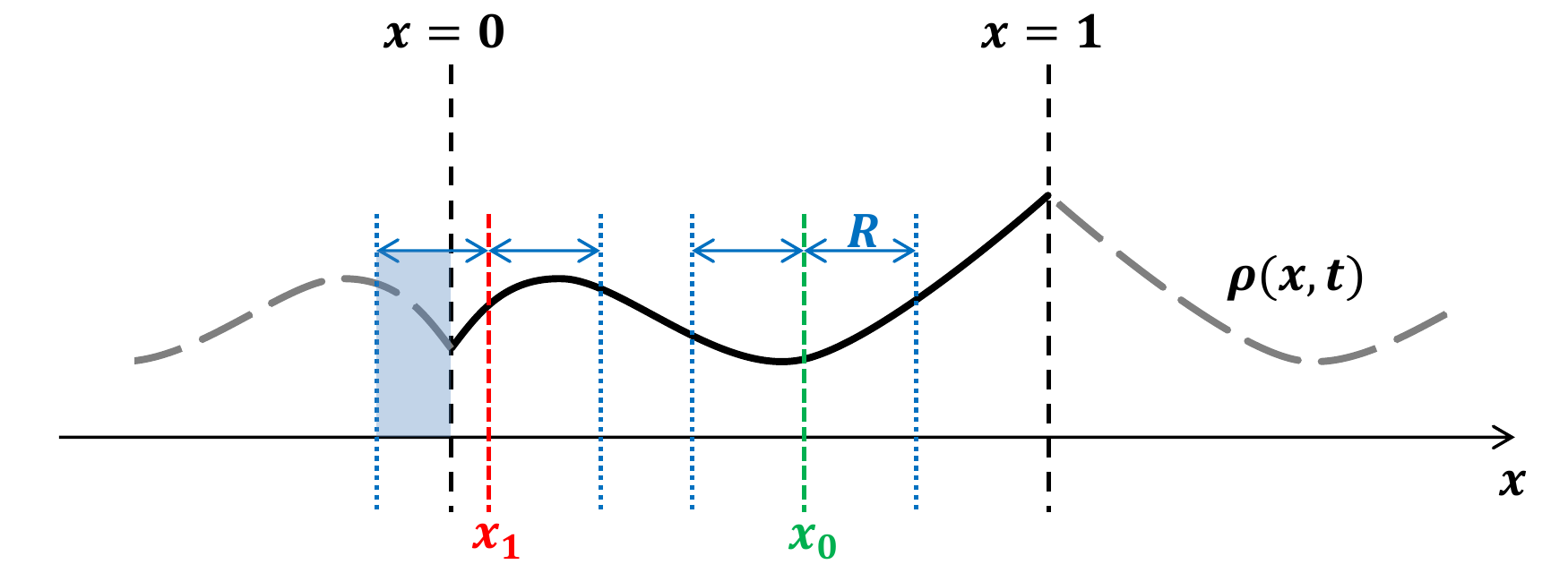}
\caption{The even $2$-periodic extension of the system. The opinion value $x_0 \in [R,1-R]$ effectively experiences a reflective boundary condition, while for the opinion value $x_1 \in [0,R]$ there is also a boundary effect due to the even extension. In particular, the influence of more extreme neighbors of $x_1$ is reinforced by introducing artificial ones (the shaded area in blue). The same boundary effect exists for opinion values in $[1-R, 1]$.}
\label{boundary_effect}
\end{figure}

As discussed in~\cite{Garnier17,Chazelle15,Wang17}, the FP equation~\eqref{pde1+} under the periodic conditions retains its validity, however, $\rho(t,x)$ is not a probability density on $\mathbb{R}$ but a $1$-periodic function $\rho(t,x+1)=\rho(t,x)\ge 0$, such that $\int_0^1\rho(t,x)\diff x=1$ (that is, $\rho(t,\cdot)$ serves as a density on the interval $[0,1]$). 
Similarly, for the even $2$-periodic boundary condition, the equation~\eqref{pde1+} retains its validity when we replace the probability density $\rho(t,x)$ with an even $2$-periodic function, that is, $\rho(t,-x)=\rho(t,x)$ and $\rho(t,x+2)=\rho(t,x)$. 
On the interval $[0,1]$, the function $\rho(t,\cdot)$ again serves as a probability density: $\int_0^1\rho(t,x)\diff x=1$. 
We also assume that the initial density $\rho_0(x)=\rho(0,x)$
and the density of radical opinions $\rho_r(x)$, defined on $[0,1]$, are extended (in the unique possible way) to even $2$-periodic functions on $\mathbb{R}$.

In this study, without loss of generality, we take $X = [0,1]$ and $\tilde{X} = [-1,1]$ to be the bounded opinion domain and its even extension, respectively. 
To summarize the discussion above, the macroscopic model for opinion dynamics considered in this study is fully described by the following PDE 
\begin{eqnarray} \label{pde2}
\left\{
\begin{array}{rcl}
\rho_t = (\rho \ G_{\rho})_x + \frac{\sigma^2}{2}\rho_{xx} \  &\text{in}& \  \tilde{X} \times (0,T) \\
\rho(\cdot + 2,t) = \rho(\cdot,t) \  &\text{on}& \  \partial \tilde{X} \times (0,T) \\
\rho(x,\cdot)=\rho_0(x) \  &\text{on}& \  \tilde{X} \times \{t=0\},
\end{array}
\right.
\end{eqnarray}
where
\begin{equation} \label{G}
G_{\rho}(x,t) := w(x) \star \big(\rho (x,t) + M \rho_r(x)\big).
\end{equation}
Note that in~\eqref{pde2}, we are considering the dynamics over a finite time horizon $T$ for the sake of analysis, however, $T$ can be chosen arbitrarily large. 
We again emphasize that the initial density $\rho_0$ and the radical density $\rho_r$ are the unique even $2$-periodic extensions of the corresponding densities from $X$ to $\tilde{X}$. 
In essence, we are considering the same dynamics as in~\cite{Chazelle15} with the extra requirement for $\rho_0$ (and the newly introduced density $\rho_r$) to be even. 
Finally, we note that~\cite{Carrillo18} also provides a detailed treatment of this dynamics (without radicals) for a class of interaction potentials on a torus in higher dimensions.

\subsection{Main Theoretical Results}
\label{sec:main results}

To recapitulate, we are interested in even $2$-periodic solutions of PDE~\eqref{pde2}, where $\rho_0$ and $\rho_r$ are even $2$-periodic. 
A natural question arises whether the model is well-posed in the sense that every (sufficiently smooth) initial condition $\rho_0$ and input $\rho_r$ correspond to a unique solution. The affirmative answer is given in the following theorem.

\begin{Thm}[Well-posedness of dynamics] \label{wellposed}
Let the initial density of normal opinions and the radical opinions density satisfy $\rho_0 \in H_{ep}^3(\tilde{X}) \cap \mathcal{P}_{e}(\tilde{X})$ and $\rho_r \in H_{ep}^2(\tilde{X}) \cap \mathcal{P}_{e}(\tilde{X})$, respectively. Then, PDE~\eqref{pde2} has a unique, even, strictly positive, classical solution $\rho \in C^1(0,\infty;C_{ep}^2(\tilde{X}))$ such that $\rho(t) \in \mathcal{P}_{e}(\tilde{X})$ for all $t > 0$.
\end{Thm}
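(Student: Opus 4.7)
The plan is to treat PDE~\eqref{pde2} as a quasilinear perturbation of the heat equation on the torus and resolve it by a contraction-mapping argument, then upgrade regularity and symmetry by standard parabolic machinery. First I would fix a short horizon $\tau>0$ and, on the complete metric space $Y_\tau := C([0,\tau];H^1_{ep}(\tilde X))$ endowed with the sup-in-time norm, define the map $\Phi:\bar\rho\mapsto\rho$ where $\rho$ solves the \emph{linear} parabolic equation
\begin{equation*}
\rho_t = \tfrac{\sigma^2}{2}\rho_{xx} + \bigl(\rho\, G_{\bar\rho}\bigr)_x,\qquad G_{\bar\rho}(x,t)=w\star(\bar\rho(\cdot,t)+M\rho_r),
\end{equation*}
with $\rho(0,\cdot)=\rho_0$ and even $2$-periodic boundary data. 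Because $w$ is bounded and Lipschitz and $\bar\rho+M\rho_r$ is uniformly in $L^1(\tilde X)$, the drift $G_{\bar\rho}$ and its spatial derivative $w'\star(\bar\rho+M\rho_r)$ are uniformly bounded; hence existence, uniqueness and continuous dependence for the linear problem follow from the standard Galerkin construction in the cosine basis $\{\cos(n\pi x)\}_{n\ge0}$ together with Lions--Magenes energy estimates. The Lipschitz dependence $\|G_{\bar\rho_1}-G_{\bar\rho_2}\|_{L^\infty}\lesssim \|\bar\rho_1-\bar\rho_2\|_{L^1}$ (which uses only $\|w\|_{L^\infty}<\infty$) then yields, for $\tau$ small enough, a contraction estimate in $Y_\tau$, producing a unique weak solution on $[0,\tau]$.

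Next I would establish global-in-time extension through a priori bounds. Integrating the divergence-form equation over $\tilde X$ and using periodicity gives conservation of mass $\int_{\tilde X}\rho(t,x)\,\diff x=2$; testing against $\rho$ in $H^1_{ep}$ produces the energy inequality
\begin{equation*}
\tfrac{1}{2}\tfrac{\diff}{\diff t}\|\rho\|_{L^2}^2 + \tfrac{\sigma^2}{2}\|\rho_x\|_{L^2}^2 \le \|(G_\rho)_x\|_{L^\infty}\|\rho\|_{L^2}^2,
\end{equation*}
with $\|(G_\rho)_x\|_{L^\infty}$ controlled by $\|w'\|_{L^\infty}(\|\rho\|_{L^1}+M\|\rho_r\|_{L^1})$, which is \emph{uniformly} bounded in time by mass conservation. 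Grönwall then gives an $L^\infty_t H^1_x$ bound independent of any finite horizon, allowing concatenation of the short-time solutions into a unique solution on $[0,\infty)$.

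The third step is bootstrapping the regularity. Since $\rho_0\in H^3_{ep}$ and $\rho_r\in H^2_{ep}$, rewriting the equation as a linear parabolic equation with coefficients determined by $\rho$ itself and applying successive parabolic regularity estimates (differentiating in $x$ and testing, using Sobolev embedding $H^1(\tilde X)\hookrightarrow C^0(\tilde X)$ in one spatial dimension) upgrades $\rho$ successively to $L^\infty_t H^2_x$ and then $L^\infty_t H^3_x$, which by embedding yields $\rho\in C^1((0,\infty);C^2_{ep}(\tilde X))$ and hence a classical solution. Evenness is then automatic: if $\rho(t,x)$ is the unique solution, so is $\rho(t,-x)$ because $w$ is odd and $\rho_0,\rho_r$ are even, so the two must coincide. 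Finally, strict positivity follows from the strong parabolic maximum principle applied to the non-divergence form
\begin{equation*}
\rho_t - \tfrac{\sigma^2}{2}\rho_{xx} - G_\rho\,\rho_x - (G_\rho)_x\,\rho = 0,
\end{equation*}
where the now-classical coefficients $G_\rho,(G_\rho)_x$ are bounded; combined with mass conservation this gives $\rho(t,\cdot)\in\mathcal P_e(\tilde X)$ for every $t>0$.

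The main obstacle I anticipate is the interplay between the nonlocal nonlinearity and the regularity bootstrap: the drift $G_\rho$ is only as smooth as $w\star\rho$ allows, and although convolution with $w$ gains one derivative due to the Lipschitz structure of $w$, the corner of $w$ at $|\xi|=R$ means that $w''$ is merely a signed measure, so one cannot blindly gain two derivatives from $G_\rho$. This is what forces the $H^3$ hypothesis on $\rho_0$ (and $H^2$ on $\rho_r$) rather than something weaker, and it requires carrying out the energy estimates at the level of $\rho_{xx}$ directly rather than differentiating $G_\rho$ twice. Once this subtlety is handled, the remaining ingredients assemble into the stated classical, even, strictly positive, mass-preserving solution.
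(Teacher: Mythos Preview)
Your overall architecture is sound and differs from the paper's route: the paper builds the solution via an iterative linearization $\rho_{n-1}\mapsto\rho_n$ (solving $\partial_t\rho_n=(\rho_nG_{\rho_{n-1}})_x+\tfrac{\sigma^2}{2}\partial_{xx}\rho_n$) and passes to the limit by weak compactness, whereas you run a Banach contraction; for strict positivity the paper invokes the parabolic Harnack inequality while you appeal to the strong maximum principle. Either pair of choices works.

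There is, however, a concrete error that propagates through several of your steps: $w$ is \emph{not} Lipschitz. Since $w(\xi)=\xi\,\textbf{1}_{|\xi|\le R}$ has jump discontinuities at $\xi=\pm R$, the distributional derivative $w'$ carries Dirac masses, so $\|w'\|_{L^\infty}$ does not exist and your bound $\|(G_\rho)_x\|_{L^\infty}\le\|w'\|_{L^\infty}(\|\rho\|_{L^1}+M\|\rho_r\|_{L^1})$ is false. Computing directly,
\[
(G_\rho)_x(x)=\int_{x-R}^{x+R}(\rho+M\rho_r)\,\diff y-R\bigl[\rho(x{+}R)+\rho(x{-}R)+M\rho_r(x{+}R)+M\rho_r(x{-}R)\bigr],
\]
which involves point values of $\rho$ and hence is controlled by $\|\rho\|_{L^\infty}$, not by mass alone. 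Your global $L^2$ energy inequality therefore fails as written; the repair is to estimate instead $\bigl|\int\rho_x\,\rho\,G_\rho\,\diff x\bigr|\le\|G_\rho\|_{L^\infty}\|\rho_x\|_{L^2}\|\rho\|_{L^2}$ and use $\|G_\rho\|_{L^\infty}\le R(\|\rho\|_{L^1}+2M)$, which \emph{is} mass-controlled, then absorb via Young's inequality. Likewise, your claim that convolution with $w$ ``gains one derivative due to the Lipschitz structure of $w$'' reaches the right conclusion for the wrong reason: the gain $\|\partial_x^kG_\rho\|_{L^2}\lesssim\|\rho\|_{H^{k-1}}+\|\rho_r\|_{H^{k-1}}$ does hold and is what drives the bootstrap, but it comes from the explicit shift formula above (each $x$-derivative produces shifted point evaluations of one lower derivative of $\rho$), not from any smoothness of $w$. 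With these corrections your scheme goes through.
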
 

This result implies that $\rho(t):=\rho(t,\cdot)$ is a (strictly positive) probability density on $X = [0,1]$ for all $t > 0$, as required. 
For the  autonomous systems (without radicals),~\cite{Chazelle15, Carrillo18} provide a sufficient condition for exponential convergence of the dynamics towards uniform distribution $\rho = 1$ as an equilibrium of the system.
Unlike those studies, the uniform distribution is not an equilibrium of the model considered in this study. 
However, it is possible to extend this stability result to our model including the exogenous input, i.e., the radicals.
To this end, we first consider the stationary equation corresponding to PDE~\eqref{pde2} given by
\begin{equation} \label{pdes}
\frac{\sigma^2}{2} \rho_{xx}+(\rho \ G_{\rho})_x=0 .
\end{equation} 
We are particularly interested in even stationary solutions $\rho^s \in \mathcal{P}_{e}(\tilde{X})$ of~\eqref{pdes}. Our next result characterizes the stationary state of the system.
 
\begin{Thm} [Stationary behavior] \label{StatinarySolution}
Let $\rho_r \in H_{ep}^1(\tilde{X}) \cap \mathcal{P}_{e}(\tilde{X})$ be the radical opinions density. 
\begin{itemize}
\item \textbf{Existence}: the stationary equation~\eqref{pdes} has an even, strictly positive, classical solution $\rho^s \in C^{2}_{ep}(\tilde{X}) \cap \mathcal{P}_{e}(\tilde{X})$.
\item \textbf{Estimate}: for any $\eta > 0$, if $\sigma^2 > \sigma_b^2 + \eta c_b$, then $\Vert \rho^s - 1 \Vert_{L^2} \leq \frac{1}{\eta} \Vert \rho_r \Vert_{L^2}$, where
\begin{equation} \label{sigma_b and c_b}
\sigma_b^2 := \frac{4R}{\pi} \left( M+\frac{R}{\sqrt{3}}+2 \right) \text{ and } c_b := \frac{4R^2M}{\pi \sqrt{3}}.
\end{equation}
\end{itemize}
\end{Thm}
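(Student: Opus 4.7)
The proof rests on two observations driven by even symmetry. First, integrating \eqref{pdes} from $0$ to $x$ yields $\tfrac{\sigma^2}{2}\rho^s_x+\rho^s G_{\rho^s}=C$; the oddness of $w$ together with the evenness of $\rho^s$ and $\rho_r$ make both $\rho^s_x(0)$ and $G_{\rho^s}(0)$ vanish, so $C=0$, and every admissible even solution has the Gibbs-type form
\[
\rho^s(x)=Z^{-1}\exp\!\Bigl(-\tfrac{2}{\sigma^2}\!\int_0^x G_{\rho^s}(y)\,dy\Bigr),\qquad Z:=\int_0^1\exp\!\Bigl(-\tfrac{2}{\sigma^2}\!\int_0^y G_{\rho^s}(s)\,ds\Bigr)dy.
\]
Second, $w\star 1=0$ on the torus $\tilde{X}$ (because $w$ is odd with support in $[-R,R]\subset\tilde{X}$), a cancellation that is crucial for the estimate.

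For existence, I apply Schauder's fixed-point theorem to the self-consistency map $T:\mathcal{P}_{e}(\tilde{X})\to\mathcal{P}_{e}(\tilde{X})$ obtained by replacing $\rho^s$ with a generic input $\tilde\rho$ in the Gibbs formula above. Continuity of $T$ in $C^0_{ep}(\tilde{X})$ is routine. The uniform bound $\Vert G_{\tilde\rho}\Vert_\infty\le\Vert w\Vert_\infty\Vert\tilde\rho+M\rho_r\Vert_{L^1(\tilde{X})}=2R(1+M)$, valid over the whole probability simplex, places $T(\mathcal{P}_{e}(\tilde{X}))$ in a uniformly $C^1$-bounded family of strictly positive densities, hence in a relatively compact subset of $C^0_{ep}(\tilde{X})$ by Arzel\`a--Ascoli. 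Restricting $T$ to the closed convex hull of its image yields a compact convex self-map, to which Schauder applies. Evenness and strict positivity are immediate from the exponential formula, and the classical regularity $\rho^s\in C^2_{ep}(\tilde{X})$ follows from the $C^1$-regularity of $G_{\rho^s}$, itself a consequence of $\rho^s\in C^0$, $\rho_r\in H^1_{ep}$, and the bounded-variation structure of $w$.

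For the quantitative bound, set $u:=\rho^s-1$, which is even with $\int_{\tilde{X}}u\,dx=0$. Using $w\star 1=0$, the first-order ODE becomes
\[
\tfrac{\sigma^2}{2}u_x=-(1+u)\bigl(w\star u+M\,w\star\rho_r\bigr).
\]
I would take $L^2(\tilde{X})$-norms, bound each of the four resulting terms by Young's convolution inequality in the two forms $\Vert w\star f\Vert_{L^2}\le\Vert w\Vert_{L^1}\Vert f\Vert_{L^2}=R^2\Vert f\Vert_{L^2}$ and $\Vert w\star f\Vert_{L^\infty}\le\Vert w\Vert_{L^2}\Vert f\Vert_{L^2}=R\sqrt{2R/3}\,\Vert f\Vert_{L^2}$, invoke the orthogonality $\Vert\rho_r-1\Vert_{L^2}\le\Vert\rho_r\Vert_{L^2}$ (coming from $\int(\rho_r-1)\,dx=0$) to replace $\rho_r-1$ by $\rho_r$ on the right-hand side, and apply the Wirtinger inequality $\Vert u\Vert_{L^2}\le\tfrac{1}{\pi}\Vert u_x\Vert_{L^2}$ for mean-zero $2$-periodic functions (the origin of the $\tfrac{4R}{\pi}$ prefactor in $\sigma_b^2$). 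The target is a linear inequality of the form $(\sigma^2-\sigma_b^2)\Vert u\Vert_{L^2}\le c_b\Vert\rho_r\Vert_{L^2}$, which under the hypothesis $\sigma^2>\sigma_b^2+\eta c_b$ immediately delivers $\Vert u\Vert_{L^2}\le\tfrac{c_b}{\sigma^2-\sigma_b^2}\Vert\rho_r\Vert_{L^2}\le\tfrac{1}{\eta}\Vert\rho_r\Vert_{L^2}$.

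The principal obstacle is the self-referential appearance of $\Vert u\Vert_{L^2}$ inside the nonlinear term $\Vert u(w\star u+Mw\star\rho_r)\Vert_{L^2}$, which a priori produces $\Vert u\Vert_{L^2}^2$-contributions on the right-hand side and prevents the naive reduction to a linear inequality. I anticipate resolving this either by splitting the offending term via Young's inequality with a well-chosen parameter so that the coefficient of $\Vert u\Vert_{L^2}^2$ can be absorbed on the left under the hypothesis $\sigma^2>\sigma_b^2+\eta c_b$, or by a continuity/bootstrap argument along the one-parameter family $\rho_r^t:=t\rho_r+(1-t)$, $t\in[0,1]$, propagating the bound from the trivial solution $\rho^s\equiv 1$ at $t=0$ to the desired one at $t=1$. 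Matching the explicit constants $\sigma_b^2=\tfrac{4R}{\pi}(M+R/\sqrt{3}+2)$ and $c_b=\tfrac{4R^2M}{\pi\sqrt{3}}$ then amounts to careful algebraic bookkeeping of the factors of $R$ from $\Vert w\Vert_{L^1}=R^2$ and $\Vert w\Vert_{L^2}=R\sqrt{2R/3}$ and their interaction with the Wirtinger constant $1/\pi$.
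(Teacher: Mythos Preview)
Your existence argument is essentially the paper's: the same Gibbs-type fixed-point characterization (with the same reason for $C=0$) followed by Schauder applied to the self-consistency map, using the uniform bound $\Vert G_{\tilde\rho}\Vert_{L^\infty}\le 2R(1+M)$ for compactness. The paper works in $L^2$ and invokes Rellich--Kondrachov rather than $C^0$ and Arzel\`a--Ascoli, but this is cosmetic.

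For the estimate, your strategy (pass to $u=\rho^s-1$, use $w\star 1=0$, combine an energy-type bound with Wirtinger/Poincar\'e with constant $1/\pi$) also matches the paper's. However, the ``principal obstacle'' you identify is not real, and your proposed workarounds (Young with parameter, or a continuation in $t$) are unnecessary. The nonlinear term is handled in one line by bounding $\Vert w\star u\Vert_{L^\infty}$ (and hence $\Vert G_u\Vert_{L^\infty}$) via the $L^1$ norm rather than the $L^2$ norm: since $\rho^s$ and $1$ are both probability densities on $X$, one has a priori $\Vert u\Vert_{L^1(X)}\le \Vert\rho^s\Vert_{L^1(X)}+\Vert 1\Vert_{L^1(X)}=2$, so $\Vert G_u\Vert_{L^\infty}\le 2R(M+2)$ is a fixed constant independent of $\Vert u\Vert_{L^2}$. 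This immediately turns the offending product $\Vert u\,G_u\Vert_{L^2}\le\Vert G_u\Vert_{L^\infty}\Vert u\Vert_{L^2}$ into a term linear in $\Vert u\Vert_{L^2}$, and the inequality $(\sigma^2-\sigma_b^2)\Vert u\Vert_{L^2}\le c_b\Vert\rho_r\Vert_{L^2}$ follows directly. Your bounds $\Vert w\star f\Vert_{L^\infty}\le\Vert w\Vert_{L^2}\Vert f\Vert_{L^2}$ are the wrong tool here; the relevant estimate is $\Vert w\star f\Vert_{L^\infty}\le\Vert w\Vert_{L^\infty}\Vert f\Vert_{L^1}=R\Vert f\Vert_{L^1}$. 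Incidentally, the paper arrives at the same first-order inequality not by taking $L^2$ norms of the ODE but by multiplying the second-order stationary equation by $\psi$ and integrating by parts, then dividing through by $\Vert\psi_x\Vert_{L^2}$; the two routes are equivalent.
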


Notice how the global estimate in Theorem~\ref{StatinarySolution} bounds the difference between the stationary solution and the uniform distribution. This result shows that, even in presence of radical opinions, the stationary solution can be made arbitrarily close to the uniform distribution by increasing the noise level beyond a minimum level~$\sigma_b$. 
We note that the minimum noise level $\sigma_b$ is directly related to the confidence range $R$ and the relative mass $M$. 
Also, as the ``energy'' $M \Vert \rho_r \Vert_{L^2}$ of the radicals increases, in order to counteract their effect and keep the stationary profile in a somewhat uniform state, one must increase the noise level further beyond~$\sigma_b$.

With this result in hand, we can now consider the asymptotic stability of stationary state. 
The next result provides a sufficient condition for exponential convergence of the dynamics to stationary state for arbitrary (and sufficiently smooth) initial density $\rho_0$ and radical density $\rho_r$.
 
\begin{Thm} [Stability] \label{StabilityStatinarySolution}
Let $\rho_0 \in H_{ep}^3(\tilde{X}) \cap \mathcal{P}_{e}(\tilde{X})$ be the initial density of normal opinions and $\rho_r \in H_{ep}^2(\tilde{X}) \cap \mathcal{P}_{e}(\tilde{X})$ be the radical opinions density. Also, let $\rho \in C^1(0,\infty;C_{ep}^2(\tilde{X}))$ with $\rho(t) \in \mathcal{P}_{e}(\tilde{X})$ be the solution to the dynamic equation~\eqref{pde2}. Then, $\rho(t)$ converges to a stationary state $\rho^s \in C^{2}_{ep}(\tilde{X}) \cap \mathcal{P}_{e}(\tilde{X})$ exponentially in $L^2$ as $t \rightarrow \infty$ if $\sigma > \sigma_s$, where $\sigma_s > 0$ uniquely solves
\begin{equation} \label{sigma_s}
\sigma_s^2 = \frac{4R(3+M)}{\pi} + \frac{4R^2}{\pi \sqrt{3}} \ \exp \left( \frac{8R(1+M)}{\sigma_s^2} \right).
\end{equation}
\end{Thm}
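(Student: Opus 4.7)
My plan is an $L^2$ energy estimate on the deviation $u(t,\cdot):=\rho(t,\cdot)-\rho^s$, combined with an explicit Gibbs-type bound on $\|\rho^s\|_{L^\infty}$. Because both $\rho(t)$ and $\rho^s$ lie in $\mathcal{P}_e(\tilde{X})$ and are even $2$-periodic, $u$ inherits evenness, $2$-periodicity, and vanishing mean over $\tilde{X}$, so the sharp Poincar\'e inequality $\|u\|_{L^2}\le \pi^{-1}\|u_x\|_{L^2}$ (from a Fourier cosine expansion) is available. Subtracting~\eqref{pdes} from~\eqref{pde2} and using the algebraic identity
\[
\rho G_\rho-\rho^s G_{\rho^s}=\rho\,(w\star u)+u\,G_{\rho^s},
\]
I obtain $u_t=\tfrac{\sigma^2}{2}u_{xx}+[\rho(w\star u)+uG_{\rho^s}]_x$. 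Testing against $u$ and integrating by parts over $\tilde{X}$ (boundary terms vanish by even $2$-periodicity) yields
\[
\tfrac{1}{2}\tfrac{\diff}{\diff t}\|u\|_{L^2}^2=-\tfrac{\sigma^2}{2}\|u_x\|_{L^2}^2-\int_{\tilde{X}}u_x\,\rho\,(w\star u)\,\diff x-\int_{\tilde{X}}u_x\,u\,G_{\rho^s}\,\diff x.
\]

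Next I would estimate the two non-diffusive integrals. Young's convolution inequality gives $\|w\star u\|_{L^2}\le \|w\|_{L^1}\|u\|_{L^2}=R^2\|u\|_{L^2}$, while a direct bound yields $\|G_{\rho^s}\|_{L^\infty}\le \|w\|_{L^\infty}(\|\rho^s\|_{L^1(\tilde{X})}+M\|\rho_r\|_{L^1(\tilde{X})})=2R(1+M)$. Writing $\rho=\rho^s+u$ in the first integral separates a quadratic piece, controlled by $\|\rho^s\|_{L^\infty}$, from a cubic piece $\int u_x u(w\star u)\,\diff x$; the latter is handled via $\|w\star u\|_{L^\infty}\le \|w\|_{L^2}\|u\|_{L^2}$ with $\|w\|_{L^2}^2=2R^3/3$, which is where the constant $\tfrac{4R^2}{\pi\sqrt{3}}$ in~\eqref{sigma_s} originates. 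The $\|\rho^s\|_{L^\infty}$ factor comes from a Gibbs representation: integrating~\eqref{pdes} once and noting that both $\rho^s_x$ and $G_{\rho^s}$ vanish at $0$ (by evenness of $\rho^s$, evenness of $\rho_r$, and oddness of $w$ on $[-R,R]$) forces the flux $\tfrac{\sigma^2}{2}\rho^s_x+\rho^s G_{\rho^s}$ to be identically zero, so $\rho^s=C\exp(-2V_{\rho^s}/\sigma^2)$ for any primitive $V_{\rho^s}$ of $G_{\rho^s}$. Since $\mathrm{osc}(V_{\rho^s})\le 2\|G_{\rho^s}\|_{L^\infty}\le 4R(1+M)$ and the normalization $\int_X\rho^s=1$ forces $\min_{\tilde{X}}\rho^s\le 1$, I obtain the crucial estimate
\[
\|\rho^s\|_{L^\infty}\le \exp\!\left(\tfrac{8R(1+M)}{\sigma^2}\right),
\]
exactly the exponential factor appearing in~\eqref{sigma_s}.

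Assembling the bounds, applying Young's inequality to each cross-product $\|u_x\|_{L^2}\|u\|_{L^2}$ with weights that return $\tfrac{\sigma^2}{2}\|u_x\|_{L^2}^2$ to the dissipation, and then invoking the Poincar\'e inequality, leads to a differential inequality of the form $\tfrac{\diff}{\diff t}\|u\|_{L^2}^2\le -2\gamma(\sigma)\|u\|_{L^2}^2$, where strict positivity of $\gamma(\sigma)$ is precisely the relation $\sigma^2>\tfrac{4R(3+M)}{\pi}+\tfrac{4R^2}{\pi\sqrt{3}}\exp(8R(1+M)/\sigma^2)$. A short monotonicity argument shows that the scalar equation~\eqref{sigma_s} admits a unique positive root $\sigma_s$, after which Gronwall's lemma delivers the claimed exponential $L^2$ convergence of $\rho(t)$ to $\rho^s$. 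The main technical obstacle is the cubic integral $\int u_x u(w\star u)\,\diff x$: unlike the two quadratic nonlinearities, it cannot be dispatched by a single Cauchy--Schwarz/Young step but requires either a uniform-in-time control of $\|u(t)\|_{L^2}$ (obtained by bootstrapping from the coercivity once established, or via a maximum-principle argument on the conservative form of~\eqref{pde2}) or a Gagliardo--Nirenberg interpolation between $\|u\|_{L^2}$ and $\|u_x\|_{L^2}$. Reconciling this cubic term with the precise constants in~\eqref{sigma_s}---in particular accounting for the extra $\tfrac{8R}{\pi}$ beyond the na\"ive $\tfrac{4R(1+M)}{\pi}$ coming from the $G_{\rho^s}$ term---is where the bulk of the delicate bookkeeping lies.
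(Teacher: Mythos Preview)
Your overall architecture matches the paper's proof: $L^2$ energy estimate on $u=\rho-\rho^s$, the same three-term splitting $u(w\star u)$, $uG_{\rho^s}$, $\rho^s(w\star u)$, the Gibbs bound $\|\rho^s\|_{L^\infty}\le\exp(8R(1+M)/\sigma^2)$, then Young, Poincar\'e, and Gronwall. The bound $\|G_{\rho^s}\|_{L^\infty}\le 2R(1+M)$ is also exactly what the paper uses.

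However, you have misattributed the constants and, as a consequence, manufactured an obstacle that is not there. The factor $\tfrac{4R^2}{\pi\sqrt{3}}\exp(8R(1+M)/\sigma^2)$ in~\eqref{sigma_s} does \emph{not} come from the cubic term; it comes from the \emph{quadratic} piece $\int u_x\,\rho^s(w\star u)\,\diff x$, estimated via $\|\rho^s\|_{L^\infty}$ together with $\|w\star u\|_{L^2}\le \tfrac{2R^2}{\sqrt{3}}\|u\|_{L^2}$ (Cauchy--Schwarz inside the convolution, as in~\eqref{boundG21}--\eqref{boundG22} of the paper). The cubic integral $\int u_x\,u\,(w\star u)\,\diff x$ is dispatched in one line by the $L^1$ bound you overlooked: since $\rho(t),\rho^s\in\mathcal{P}_e(\tilde X)$ one has $\|u\|_{L^1(X)}\le\|\rho\|_{L^1(X)}+\|\rho^s\|_{L^1(X)}=2$, hence
\[
\|w\star u\|_{L^\infty}\le R\,\|u\|_{L^1(\tilde X)}=2R\,\|u\|_{L^1(X)}\le 4R,
\]
uniformly in $t$, and therefore $\bigl|\int u_x\,u\,(w\star u)\bigr|\le 4R\,\|u_x\|_{L^2}\|u\|_{L^2}$. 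Combined with the $2R(1+M)$ from the $G_{\rho^s}$ term this produces $2R(3+M)\|u_x\|_{L^2}\|u\|_{L^2}$, and after Poincar\'e exactly the $\tfrac{4R(3+M)}{\pi}$ in~\eqref{sigma_s}---your ``missing'' $\tfrac{8R}{\pi}$ is precisely $4R\cdot\tfrac{2}{\pi}$. No bootstrapping, no Gagliardo--Nirenberg, no maximum principle is needed; your proposed route through $\|w\star u\|_{L^\infty}\le\|w\|_{L^2}\|u\|_{L^2}$ would leave a genuinely cubic inequality that does not close and would not recover the stated threshold.
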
 
 
An immediate result of Theorems~\ref{StatinarySolution} and~\ref{StabilityStatinarySolution} is that for sufficiently large noises, the dynamics will converge to a stationary state that can be made arbitrarily close to uniform distribution by increasing the noise level.

\begin{Cor}[Input-output stability] \label{Long-term behavior}
For any $\eta > 0$, if 
$
\sigma^2 > \max \{ \sigma_b^2 + \eta c_b,\sigma_s^2 \},
$
where $\sigma_b$ and $c_b$ are defined in~\eqref{sigma_b and c_b} and $\sigma_s > 0$ uniquely solves~\eqref{sigma_s}, then it holds that 
\begin{align} \label{main result}
\Vert \rho (t) - 1 \Vert_{L^2} \leq \beta e^{-\lambda t} +  \frac{1}{\eta} \Vert \rho_r \Vert_{L^2},
\end{align}
where the constant $\beta> 0$ depends on $\rho_0$ and $\rho_r$ and the convergence rate $\lambda >0$ depends on $\sigma$, $R$, and $M$.
\end{Cor}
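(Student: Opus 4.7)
The plan is very short: Corollary~\ref{Long-term behavior} is a direct consequence of the two preceding theorems combined via the triangle inequality in $L^2(\tilde{X})$, so no new machinery should be needed.

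First, I would invoke Theorem~\ref{StabilityStatinarySolution}. Since by assumption $\sigma > \sigma_s$, and the hypotheses on $\rho_0$ and $\rho_r$ (namely $\rho_0 \in H_{ep}^3(\tilde{X}) \cap \mathcal{P}_e(\tilde{X})$ and $\rho_r \in H_{ep}^2(\tilde{X}) \cap \mathcal{P}_e(\tilde{X})$) are exactly those required by the well-posedness theorem as well, the solution $\rho(t)$ to~\eqref{pde2} exists, is unique, and converges exponentially in $L^2$ to some stationary state $\rho^s \in C^2_{ep}(\tilde{X}) \cap \mathcal{P}_e(\tilde{X})$. Concretely, there exist constants $\beta > 0$ (depending on $\rho_0$ and $\rho_r$ through the initial distance to $\rho^s$) and $\lambda > 0$ (depending on $\sigma, R, M$ through the contraction rate produced in the proof of Theorem~\ref{StabilityStatinarySolution}) such that
\begin{equation*}
\Vert \rho(t) - \rho^s \Vert_{L^2} \leq \beta\, e^{-\lambda t}.
\end{equation*}

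Second, I would apply Theorem~\ref{StatinarySolution} to this very $\rho^s$. Since $\rho^s$ is an even stationary solution in $\mathcal{P}_e(\tilde{X})$, the global estimate applies: under the hypothesis $\sigma^2 > \sigma_b^2 + \eta c_b$, one has
\begin{equation*}
\Vert \rho^s - 1 \Vert_{L^2} \leq \frac{1}{\eta}\, \Vert \rho_r \Vert_{L^2}.
\end{equation*}
Note that Theorem~\ref{StatinarySolution} only requires $\rho_r \in H_{ep}^1(\tilde{X}) \cap \mathcal{P}_e(\tilde{X})$, which is weaker than what we have here, so the hypothesis is satisfied.

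Finally, the triangle inequality in $L^2(\tilde{X})$ gives
\begin{equation*}
\Vert \rho(t) - 1 \Vert_{L^2} \leq \Vert \rho(t) - \rho^s \Vert_{L^2} + \Vert \rho^s - 1 \Vert_{L^2} \leq \beta\, e^{-\lambda t} + \frac{1}{\eta}\, \Vert \rho_r \Vert_{L^2},
\end{equation*}
which is precisely~\eqref{main result}. There is no real obstacle; the only mild subtlety is that the estimate in Theorem~\ref{StatinarySolution} must apply to the \emph{specific} stationary state produced as the limit in Theorem~\ref{StabilityStatinarySolution}, but since that estimate is stated for any even stationary solution in $\mathcal{P}_e(\tilde{X})$, this alignment is automatic.
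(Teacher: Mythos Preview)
Your proposal is correct and follows exactly the approach the paper intends: the corollary is stated there as an ``immediate result'' of Theorems~\ref{StatinarySolution} and~\ref{StabilityStatinarySolution}, with no separate proof given, and the only way to combine the two bounds is precisely the triangle inequality you wrote down. Your observation that the estimate in Theorem~\ref{StatinarySolution} applies to \emph{any} even stationary solution (and hence in particular to the limit $\rho^s$ produced by Theorem~\ref{StabilityStatinarySolution}) is the one point worth making explicit, and you handled it correctly.
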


\begin{Rem}[Connection to existing works]\label{connection to stability result}
The stability result of Corollary~\ref{Long-term behavior} corresponds to the result reported in~\cite[Theorem 2.3]{Chazelle15} on global stability of uniform distribution $\rho = 1$ for sufficiently large noises in the autonomous system \emph{without radicals}. 
In particular, by setting $M=0$ in the estimate given in Theorem~\ref{StatinarySolution}, one has $c_b = 0$, hence  $\rho^s = 1$ is the unique stationary state of the system for $\sigma^2 > \sigma_b^2 = \frac{4R}{\pi} \left( 2+R/\sqrt{3} \right)$. 
We note that $\sigma_b$ is the same minimum noise level given in~\cite[Theorem 2.3]{Chazelle15}, taking into account a multiplicative factor of two due to the even extension considered in this study. 
However, direct application of Theorem~\ref{StabilityStatinarySolution} for stability of $\rho^s = 1$ leads to a sufficient minimum noise level $\sigma_s > \sigma_b$. This is due to the fact that this result is based on conservative estimates for $\rho^s$. 
Indeed, if one incorporates the fact that $\rho^s = 1$ and modifies some of the arguments provided in the proof of Theorem~\ref{StabilityStatinarySolution} in Section~\ref{sec:longterm}, then one can show that, in the absence of radical agents, the uniform distribution $\rho^s = 1$ is also globally exponentially stable for $\sigma > \sigma_b$, reproducing the result of \cite[Theorem 2.3]{Chazelle15}.
\end{Rem}

Finaly, we note that, based on the results provided in~\cite{Carrillo18}, the input-output stability result of Corollary~\ref{Long-term behavior} can be generalized to multi-dimensional first-order stochastic interacting particle systems for a particular class of interaction potentials. 

The next two sections are mainly concerned with the technical proofs of the theoretical results listed above. 

\section{Well-posedness of Dynamics}
\label{sec:wellposed}

This section is devoted to the proof of Theorem~\ref{wellposed} concerning the well-posedness of the dynamics~\eqref{pde2}. 
Throughout this section, all the norms are w.r.t. $\tilde{X} = [-1,1]$ (as opposed to $X = [0,1]$), unless indicated otherwise. 
We use $C, C_0, C_1, \ldots$ to represent a generic constant (depending on the model parameters) which actual values may change from line to line. 
In case these constants depend on a particular object of interest, say $\theta$, this dependence is explicitly indicated by $C[\theta]$.

Let us first note that because of periodicity, the mass is preserved in~\eqref{pde2}, that is, 
$$\int_{\tilde{X}} \rho(x,t) \ \diff x= \int_{\tilde{X}} \rho_0(x) \ \diff x = 2,$$
for all $t\geq 0$. 
In particular, we have 
$$
\Vert \rho (t) \Vert_{L^1} \geq \int_{\tilde{X}} \rho(x,t) \ \diff x = 2 > 0.
$$
We will be using this property in the sequel.

We start by presenting some useful estimates for the object $G_{\rho}$ defined in~\eqref{G} that make it possible to extend the results provided by~\cite{Chazelle15} to our model.
\begin{Lem}[Estimates for $G_{\rho}$] \label{Lem: G}
Let $G_{\rho}$ be the function defined in~\eqref{G} with $\rho_r \in \mathcal{P}_{e}(\tilde{X})$. 
If $\rho(t) \in L_{per}^1 (\tilde{X})$, then 
\begin{equation} \label{G1}
\Vert G_{\rho} \Vert_{L^{\infty}} \leq R \left( \Vert \rho (t) \Vert_{L^1} + 2M \right).
\end{equation}
If, moreover, $\Vert \rho (t) \Vert_{L^1} > 0$ , then
\begin{equation} \label{G2}
\Vert G_{\rho} \Vert_{L^{\infty}} \leq C \ \Vert \rho (t) \Vert_{L^1} \leq C \ \Vert \rho (t) \Vert_{L^2}.
\end{equation}
\end{Lem}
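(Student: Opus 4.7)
The plan is to exploit two elementary facts about the integrand of $G_\rho$: the interaction kernel $w$ is uniformly bounded by $R$ (its support is $[-R,R]$ and it equals the identity on that interval), and the even extension of $\rho_r$ has a prescribed $L^1$-mass on $\tilde X$.

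For \eqref{G1}, I would start directly from the convolution formula for $G_\rho$, pull out the pointwise bound $\|w\|_{L^\infty}\le R$, and split the integrand via the triangle inequality to obtain
$$\|G_\rho\|_{L^\infty}\le R\bigl(\|\rho(t)\|_{L^1}+M\|\rho_r\|_{L^1(\tilde X)}\bigr).$$
The only observation needed beyond this is that, by the very definition of $\mathcal{P}_e(\tilde X)$, the function $\rho_r$ integrates to $1$ over $X=[0,1]$; its even extension to $\tilde X=[-1,1]$ therefore has $L^1$-norm equal to $2$, which accounts for the factor $2M$ appearing in the stated bound.

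For \eqref{G2}, the idea is that once $\|\rho(t)\|_{L^1}>0$, the constant $2RM$ from \eqref{G1} can be absorbed into a multiple of $\|\rho(t)\|_{L^1}$. This is where the mass-preservation observation at the beginning of this section enters: any solution of \eqref{pde2} satisfies $\|\rho(t)\|_{L^1}=2$ uniformly in $t$, so $2RM = RM\cdot\|\rho(t)\|_{L^1}$ and one may take the generic constant as $C = R(1+M)$, depending only on the model parameters. The final comparison $\|\rho(t)\|_{L^1}\le C\|\rho(t)\|_{L^2}$ is then Cauchy--Schwarz on the bounded domain $\tilde X$, which contributes a harmless factor $|\tilde X|^{1/2}=\sqrt{2}$ to $C$.

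There is no real obstacle here; the whole lemma is essentially bookkeeping, and its role is just to package the two estimates that feed into the subsequent well-posedness arguments. The single point that benefits from care is tracking the normalization convention for $\rho_r$: the convention $\int_X\rho_r=1$, together with the even extension onto $\tilde X$, is what produces the factor $2$ in \eqref{G1}. Overlooking it would shift the effective radical weight by a factor of two in every energy estimate that later invokes this lemma.
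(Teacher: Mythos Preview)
Your proposal is correct and follows essentially the same approach as the paper: bound $|w|\le R$ pointwise, apply the triangle inequality, and use $\|\rho_r\|_{L^1(\tilde X)}=2$ for \eqref{G1}; then absorb the constant $2RM$ into $C\|\rho(t)\|_{L^1}$ and invoke the finite-measure embedding $L^2\hookrightarrow L^1$ for \eqref{G2}. If anything, you are slightly more explicit than the paper about why the generic constant in \eqref{G2} can be taken independent of $\rho(t)$ (via the mass normalization $\|\rho(t)\|_{L^1}=2$), whereas the paper simply says this ``follows from \eqref{G1} and the assumption $\|\rho(t)\|_{L^1}>0$.''
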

\begin{proof}
Notice
\begin{align*}
|G_{\rho}(x,t)| &= \left| \int (x-y) \ \textbf{1}_{|x-y|\leq R} \ (\rho(y,t) + M \rho_r(y)) \ \diff y  \right| \nonumber \\
&\leq \int |x-y| \ \textbf{1}_{|x-y|\leq R} \ |\rho(y,t) + M \rho_r(y)| \ \diff y \nonumber \\
&\leq R \int_{\tilde{X}} |\rho(y,t) + M \rho_r(y)| \ \diff y \nonumber \\
&\leq R \left( \int_{\tilde{X}} |\rho(y,t) | \ \diff y + 2M \right),
\end{align*}
from which we can conclude the inequality~\eqref{G1}. The first inequality in~\eqref{G2} then immediately follows from~\eqref{G1} and the assumption $\Vert \rho (t) \Vert_{L^1} > 0$. For the second inequality in~\eqref{G2} notice that since $\tilde{X}$ is of finite measure $\mu (\tilde{X}) = \mu ([-1,1]) = 2$, for any measurable function $v$ we have
\begin{equation} \label{finmeasure}
\Vert v \Vert_{L^p(\tilde{X})} \leq \mu (\tilde{X})^{\frac{1}{p} - \frac{1}{q}} \ \Vert v \Vert_{L^q(\tilde{X})},
\end{equation}
where $1 \leq p \leq q \leq \infty$. 
\end{proof}

Using the estimate~\eqref{G2} in Lemma~\ref{Lem: G}, one can follow similar arguments as in~\cite[Lemma 2.1]{Chazelle15} to show $\Vert \rho(t) \Vert_{L^1} = 2$ and $\rho(t) \geq 0$ for all $t \geq 0$; see also~\cite[Corollary 2.2]{Chazelle15}. 
Specifically, assuming PDE~\eqref{pde2} has a solution $\rho \in C^1(0,T;C_{per}^2(\tilde{X}))$, one can derive a priori estimate which in turn implies that the solution is non-negative so that $\rho(t)$ is a probability distribution on $X=[0,1]$ for all $t \geq 0$.

\begin{Lem}[Estimates for $\partial_x^k G_{\rho}$] \label{Lem: Gx}
Let $G_{\rho}$ be the function defined in~\eqref{G} with $\rho_r \in \mathcal{P}_{e}(\tilde{X})$. 
\begin{itemize}
\item[(i)] For $1 \leq p \leq \infty$, if $\rho(t), \rho_r  \in L_{per}^p (\tilde{X})$ with $\Vert \rho (t) \Vert_{L^1} > 0$, then
\begin{equation} \label{Gx1}
\Vert (G_{\rho})_x \Vert_{L^p} \leq C_1 \ \Vert \rho (t) \Vert_{L^p} + C_2 \ \Vert \rho_r \Vert_{L^p} \leq C[\Vert \rho_r \Vert_{L^p}] \ \Vert \rho (t) \Vert_{L^p}.
\end{equation}
\item[(ii)] For $k \geq 2$, if $\rho(t), \rho_r  \in H_{per}^{k-1} (\tilde{X})$ with $\Vert \rho (t) \Vert_{L^1} > 0$, then  
\begin{equation} \label{Gx2}
\Vert \partial_x^k G_{\rho}\Vert_{L^2} \leq C[\Vert \rho_r \Vert_{H^{k-1}}] \ \Vert \rho (t) \Vert_{H^{k-1}}.
\end{equation}
\end{itemize}
\end{Lem}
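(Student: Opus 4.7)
The plan is to obtain an explicit pointwise formula for derivatives of $G_\rho = w \star (\rho + M\rho_r)$ and then apply standard convolution estimates together with the mass-preservation property. Concretely, $w(\xi) = \xi \mathbf{1}_{|\xi|\leq R}$ has compact support, so for any $f \in L^p_{per}(\tilde{X})$ the convolution can be written as $(w \star f)(x) = \int_{x-R}^{x+R}(x-y)f(y)\,\diff y$. Differentiating via Leibniz's rule I get
\begin{equation*}
(w\star f)_x(x) = \int_{x-R}^{x+R} f(y)\,\diff y - R\, f(x+R) - R\, f(x-R),
\end{equation*}
which also matches the distributional identity $w_x(\xi) = \mathbf{1}_{(-R,R)}(\xi) - R\,\delta_{R}(\xi) - R\,\delta_{-R}(\xi)$. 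For merely $L^p$ integrands, this identity is understood in $L^p$ via mollification, which is legitimate because every term on the right is a continuous operator on $L^p_{per}$.

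For part (i), I apply this formula with $f = \rho(t) + M\rho_r$. The averaging term is a convolution against $\mathbf{1}_{[-R,R]}$, so by Young's inequality its $L^p$ norm is at most $2R\|f\|_{L^p}$. The two shift terms each have $L^p$ norm equal to $R\|f\|_{L^p}$ by translation invariance of $L^p$ on the periodic domain $\tilde{X}$. Combining via the triangle inequality gives
\begin{equation*}
\Vert (G_\rho)_x \Vert_{L^p} \leq 4R\, \Vert \rho(t)+M\rho_r \Vert_{L^p} \leq 4R\,\Vert \rho(t) \Vert_{L^p} + 4RM\,\Vert \rho_r \Vert_{L^p},
\end{equation*}
which is the first inequality with $C_1=4R$ and $C_2=4RM$. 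To get the second (collapsed) inequality, I use the hypothesis $\Vert \rho(t) \Vert_{L^1} > 0$ together with the bound~\eqref{finmeasure} applied with exponents $1\leq p$: $\Vert \rho(t) \Vert_{L^p} \geq 2^{1/p-1}\,\Vert \rho(t) \Vert_{L^1} > 0$. This allows $\Vert \rho_r \Vert_{L^p}$ to be absorbed into the $\Vert \rho(t)\Vert_{L^p}$ term at the cost of a multiplicative constant depending only on $\Vert \rho_r \Vert_{L^p}$ (and on the fixed parameters $R, M, \Vert\rho(t)\Vert_{L^1}$).

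For part (ii), the observation is that for $k \geq 2$ and $f \in H^{k-1}_{per}(\tilde{X})$, differentiation commutes with convolution in the sense $\partial_x^k(w \star f) = w_x \star \partial_x^{k-1}f$, so the pointwise formula above applies with $f$ replaced by $\partial_x^{k-1} f \in L^2_{per}$. The same $L^2$ estimate then yields
\begin{equation*}
\Vert \partial_x^k (w\star f)\Vert_{L^2} \leq 4R\,\Vert \partial_x^{k-1} f\Vert_{L^2} \leq 4R\,\Vert f\Vert_{H^{k-1}}.
\end{equation*}
Specialising to $f = \rho(t)+M\rho_r$ gives the linear bound $\Vert \partial_x^k G_\rho\Vert_{L^2} \leq 4R\,\Vert \rho(t)\Vert_{H^{k-1}} + 4RM\,\Vert \rho_r\Vert_{H^{k-1}}$, and the collapsed form follows exactly as before, using $\Vert \rho(t)\Vert_{H^{k-1}} \geq \Vert \rho(t)\Vert_{L^2} \geq 2^{-1/2}\,\Vert \rho(t)\Vert_{L^1} > 0$ to absorb $\Vert \rho_r\Vert_{H^{k-1}}$ into the constant.

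The only delicate point—quite mild—is justifying the pointwise formula for $(w\star f)_x$ in $L^p$ without extra regularity on $f$; this is handled by approximating $f$ by smooth periodic functions and observing that all three operators in the identity are bounded on $L^p_{per}$. Once this is in place, everything reduces to Young's inequality, translation invariance of $L^p_{per}$, and the positivity of $\Vert \rho(t)\Vert_{L^1}$.
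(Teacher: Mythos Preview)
Your proof is correct and follows essentially the same route as the paper: both compute the explicit formula
\[
(G_\rho)_x(x) = \int_{x-R}^{x+R} f(y)\,\diff y - R\,f(x+R) - R\,f(x-R), \qquad f=\rho(t)+M\rho_r,
\]
and then estimate each term in $L^p$, collapsing the $\rho_r$ contribution into the constant via the uniform lower bound on $\Vert\rho(t)\Vert_{L^p}$ coming from $\Vert\rho(t)\Vert_{L^1}>0$ and~\eqref{finmeasure}.

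The only organisational difference is in part~(ii). The paper differentiates the explicit formula once more, which turns the averaging integral into a difference of point evaluations and produces terms of order $k-1$ \emph{and} $k-2$; it then bounds both by the $H^{k-1}$ norm. You instead invoke the commutation $\partial_x^k(w\star f)=w_x\star\partial_x^{k-1}f$ and reapply the part~(i) bound directly to $\partial_x^{k-1}f$, yielding only the top-order term. Your version is a bit cleaner and makes the structure (a fixed bounded operator $w_x\star(\cdot)$ acting on derivatives of $f$) more transparent; the paper's version is more hands-on but equivalent. Either way the conclusion and the constants are the same up to harmless factors.
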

\begin{proof}
We have
\begin{align} \label{diffG}
\big(G_{\rho}(x,t)\big)_x = & \partial_x \left( \int (x-y) \ \textbf{1}_{|x-y|\leq R} \ \big(\rho(y,t) + M \rho_r(y)\big) \ \diff y \right) \nonumber \\
= &\partial_x \left( \int_{x-R}^{x+R} (x-y) \ \big(\rho(y,t) + M \rho_r(y)\big) \ \diff y \right) \nonumber \\
= &-R \big( \rho(x + R,t)+\rho(x- R,t) + M \rho_r(x + R) +  M \rho_r(x - R) \big) \nonumber \\ &+ \int_{x-R}^{x+R} \big(\rho(y,t) + M \rho_r(y,t)\big) \ \diff y,
\end{align}
which leads to the first inequality in~\eqref{Gx1}. Using the fact that $\Vert \rho (t) \Vert_{L^2} \geq  C \ \Vert \rho (t) \Vert_{L^1} > 0$ (see~\eqref{finmeasure}), we have the second inequality in~\eqref{Gx1}.
Computing the higher order derivatives w.r.t. $x$, we obtain for $k \geq 2$
\begin{align*}
\partial^k_x G_{\rho} = &
-R \left( \partial^{k-1}_x \rho(x+R,t) + \partial^{k-1}_x \rho(x-R,t) + M \partial^{k-1}_x \rho_r(x+R,t) + M \partial^{k-1}_x \rho_r(x-R,t) \right) \\
&+ \partial^{k-2}_x \rho(x+R,t) - \partial^{k-2}_x \rho(x-R,t) + M \partial^{k-2}_x \rho_r(x+R,t) - M \partial^{k-2}_x \rho_r(x-R,t). 
\end{align*}
Hence,
\begin{align*} \label{reg}
\Vert \partial^k_x G_{\rho} \Vert_{L^2} &\leq C \left( \Vert \partial^{k-1}_x \rho(t) \Vert_{L^2} + \Vert \partial^{k-2}_x \rho(t) \Vert_{L^2} + \Vert \partial^{k-1}_x \rho_r \Vert_{L^2} + \Vert \partial^{k-2}_x \rho_r \Vert_{L^2} \right) \nonumber \\
&\leq C \left( \Vert \rho(t) \Vert_{H^{k-1}} + \Vert \rho_r \Vert_{H^{k-1}} \right) \nonumber \\
&\leq C[\Vert \rho_r \Vert_{H^{k-1}}] \ \Vert \rho(t) \Vert_{H^{k-1}},
\end{align*}
where for the last inequality we used the fact that $ \Vert \rho(t) \Vert_{H^{k-1}} \geq \Vert \rho (t) \Vert_{L^2} \geq  C \ \Vert \rho (t) \Vert_{L^1} > 0$.
\end{proof}

\begin{Lem} [More estimates for $G_{\rho}$] \label{Lem: pG}
Let $\nu \in H^{k}_{per}(\tilde{X})$, $\rho_r \in H^{k-1}_{per}(\tilde{X}) \cap \mathcal{P}_e(X)$, and $\rho(t) \in H^{k-1}_{per}(\tilde{X})$ with $\Vert \rho (t) \Vert_{L^1} > 0$. Then for $k \geq 2$
\begin{equation}
\Vert \nu G_{\rho} \Vert_{H^k} \leq C[\Vert \rho_r \Vert_{H^{k-1}}] \ \Vert \nu \Vert_{H^k} \ \Vert \rho(t) \Vert_{H^{k-1}}.
\end{equation}
\end{Lem}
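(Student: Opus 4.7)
The plan is to reduce the claim to an $H^k$ bound on $G_\rho$ alone, and then exploit the fact that $H^k_{per}(\tilde{X})$ is a Banach algebra in one spatial dimension for $k \geq 1$ (and certainly for the case $k \geq 2$ at hand), so that one has $\|\nu G_\rho\|_{H^k} \leq C_k \|\nu\|_{H^k}\,\|G_\rho\|_{H^k}$. The central intermediate goal is therefore to establish the auxiliary estimate
\[
\|G_\rho\|_{H^k} \leq C[\|\rho_r\|_{H^{k-1}}]\,\|\rho(t)\|_{H^{k-1}}.
\]

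To obtain this auxiliary estimate, I would decompose $\|G_\rho\|_{H^k}^2 = \sum_{j=0}^{k}\|\partial^j_x G_\rho\|_{L^2}^2$ and invoke the preceding two lemmas term by term. For $j=0$, Lemma~\ref{Lem: G} gives $\|G_\rho\|_{L^\infty} \leq C\,\|\rho(t)\|_{L^2}$, which combined with the finite measure of $\tilde{X}$ yields $\|G_\rho\|_{L^2} \leq C\,\|\rho(t)\|_{L^2}$. For $j=1$, Lemma~\ref{Lem: Gx}(i) with $p=2$ gives $\|(G_\rho)_x\|_{L^2} \leq C[\|\rho_r\|_{L^2}]\,\|\rho(t)\|_{L^2}$. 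For $2 \leq j \leq k$, Lemma~\ref{Lem: Gx}(ii) gives $\|\partial^j_x G_\rho\|_{L^2} \leq C[\|\rho_r\|_{H^{j-1}}]\,\|\rho(t)\|_{H^{j-1}}$. Summing these contributions and using the monotonicity of Sobolev norms in the regularity index delivers the advertised bound.

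As an alternative to invoking the algebra property as a black box, the same conclusion can be reached by a direct Leibniz expansion
\[
\partial^j_x(\nu G_\rho) = \sum_{i=0}^{j}\binom{j}{i}(\partial^i_x \nu)\,(\partial^{j-i}_x G_\rho),
\]
followed by H\"older's inequality termwise, with the one-dimensional Sobolev embedding $H^1(\tilde{X}) \hookrightarrow L^\infty(\tilde{X})$ used to place the factor carrying strictly fewer than $k$ derivatives in $L^\infty$. The only potentially borderline case is the $i=j=k$ term $(\partial^k_x \nu)\,G_\rho$, where the second factor cannot absorb a derivative onto $\nu$; this is resolved directly by Lemma~\ref{Lem: G}, which guarantees $G_\rho \in L^\infty$ without costing derivatives.

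I do not expect a genuine obstacle here: the lemma is essentially a packaging step on top of Lemmas~\ref{Lem: G} and~\ref{Lem: Gx}. The point requiring the most care is the matching of regularity indices, since the statement only assumes $\rho(t) \in H^{k-1}$, not $H^k$, so one must verify that no term in the Leibniz expansion demands $k$ derivatives of $\rho(t)$. This is exactly the structural content of Lemma~\ref{Lem: Gx}(ii): convolution against the compactly supported kernel $w$ ``gains'' one derivative of Sobolev regularity, so $\partial^k_x G_\rho$ costs only $k-1$ derivatives of $\rho$. This derivative headroom is what makes the estimate close cleanly.
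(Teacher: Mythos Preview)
Your proposal is correct. Your alternative route---the direct Leibniz expansion with the one-dimensional embedding $H^1 \hookrightarrow L^\infty$ to place the lower-order factor in $L^\infty$---is exactly what the paper does: it bounds $\|\nu G_\rho\|_{H^k}$ by $\|\nu G_\rho\|_{L^2} + \|\partial_x^k(\nu G_\rho)\|_{L^2}$, expands the top term via Leibniz, and applies Morrey's inequality $\|\partial_x^{k-i}\nu\|_{L^\infty} \leq C\|\partial_x^{k-i}\nu\|_{H^1}$ for $i \geq 1$, reserving the $L^\infty$ bound on $G_\rho$ from Lemma~\ref{Lem: G} for the $i=0$ term.

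Your primary route is a mild but genuine repackaging: rather than expanding $\partial_x^k(\nu G_\rho)$, you first establish $\|G_\rho\|_{H^k} \leq C[\|\rho_r\|_{H^{k-1}}]\,\|\rho(t)\|_{H^{k-1}}$ as a standalone estimate and then invoke the Banach algebra property of $H^k_{per}(\tilde X)$ for $k \geq 1$ in one dimension. This is arguably cleaner, since the smoothing effect of the convolution (one derivative gained on $G_\rho$) is isolated in a single inequality, and the product structure is handled once and for all by the algebra property rather than term by term. The paper's version avoids naming the algebra property explicitly but is otherwise doing the same work inside the Leibniz sum. Both approaches rest on the same two ingredients---Lemmas~\ref{Lem: G} and~\ref{Lem: Gx}---and neither requires more than $H^{k-1}$ regularity on $\rho(t)$, which you correctly flag as the point to watch.
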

\begin{proof}
Notice
\begin{equation} \label{R1}
\Vert \nu G_{\rho} \Vert_{H^k} \leq C \left( \Vert \nu G_{\rho} \Vert_{L^2} +\Vert \partial^k_x (\nu G_{\rho}) \Vert_{L^2} \right).
\end{equation}
For the first term on the right hand side (r.h.s.) of~\eqref{R1}, we have
\begin{equation*}
\Vert \nu G_{\rho} \Vert_{L^2} \leq \Vert \nu \Vert_{L^2} \ \Vert G_{\rho} \Vert_{L^{\infty}} \leq C \ \Vert \nu \Vert_{L^2} \ \Vert \rho(t) \Vert_{L^2} \leq C \ \Vert \nu \Vert_{H^k} \ \Vert \rho(t) \Vert_{H^{k-1}},
\end{equation*}
where for the second inequality we used~\eqref{G2}. Also, using Leibniz rule, for the second term on the r.h.s. of~\eqref{R1} we can write 
\begin{align*}
\Vert \partial^k_x (\nu G_{\rho}) \Vert^2_{L^2} &= \left\Vert \sum_{i=0}^k C_i \ \partial^{k-i}_x \nu \ \partial^{i}_x G_{\rho} \right\Vert^2_{L^2} \nonumber \\
&\leq C_0 \ \Vert \partial^k_x \nu \Vert^2_{L^2} \ \Vert G_{\rho} \Vert^2_{L^{\infty}} + \sum_{i=1}^k C_i \ \Vert \partial^{k-i}_x \nu \Vert^2_{L^{\infty}} \ \Vert \partial^i_x G_{\rho} \Vert^2_{L^2} \nonumber \\
&\leq C_0 \ \Vert \nu \Vert^2_{H^k} \ \Vert \rho \Vert^2_{L^2} + \sum_{i=1}^k C_i \ \Vert \partial^{k-i}_x \nu \Vert^2_{H^1} \ \Vert \partial^i_x G_{\rho} \Vert^2_{L^2},
\end{align*}
where for the last inequality we used Morrey's inequality which implies
$$
\Vert \partial^{k-i}_x \nu \Vert_{L^{\infty}} \leq C \ \Vert \partial^{k-i}_x \nu \Vert_{H^1}.
$$
Now, from~\eqref{Gx1} we have for $i=1$
\begin{equation*}
\Vert \partial^i_x G_{\rho} \Vert^2_{L^2} \leq C[\Vert \rho_r \Vert_{L^2}] \ \Vert \rho (t) \Vert^2_{L^2},
\end{equation*}
and from~\eqref{Gx2} we have for $i \geq 2$
\begin{align*}
\Vert \partial_x^i G_{\rho}\Vert^2_{L^2} \leq C[\Vert \rho_r \Vert_{H^{i-1}}] \ \Vert \rho (t) \Vert^2_{H^{i-1}}. 
\end{align*}
Putting all these estimates together while keeping only the highest Sobolev norms, we obtain
\begin{align*}
\Vert \nu G_{\rho} \Vert_{H^k} &\leq C_1 \ \Vert \nu \Vert_{H^k} \ \Vert \rho(t) \Vert_{H^{k-1}} + C_2 \ \Vert \nu \Vert_{H^k} \ \Vert \rho(t) \Vert_{L^2} + C_3[\Vert \rho_r \Vert_{H^{k-1}}] \ \Vert \nu \Vert_{H^1} \ \Vert \rho(t) \Vert_{H^{k-1}} \nonumber \\
&\leq C[\Vert \rho_r \Vert_{H^{k-1}}] \ \Vert \nu \Vert_{H^k} \ \Vert \rho(t) \Vert_{H^{k-1}}.
\end{align*}
\end{proof}

\begin{Rem} [Connection to existing works]
Lemma~\ref{Lem: pG} is an extension of~\cite[Proposition 4.1]{Chazelle15}.
\end{Rem}

With these estimates in hand, we can follow the same arguments as in~\cite{Chazelle15} to show well-posedness of the dynamics described by PDE~\eqref{pde2}.

\begin{proof}[Sketch of proof of Theorem~\ref{wellposed}]
Consider the following sequence of PDEs 
\begin{eqnarray} \label{pdeseq}
\left\{
\begin{array}{rcl}
\partial_t \rho_n = \partial_x (\rho_n \ G_{\rho_{n-1}} ) + \frac{\sigma^2}{2} \partial_{xx} \rho_n \  &\text{in}& \  \tilde{X} \times (0,T)  \\
\rho_n(\cdot + 2,t) = \rho_n(\cdot,t) \  &\text{on}& \  \partial \tilde{X} \times (0,T)  \\
\rho_n(x,\cdot)=\rho_0(x) \  &\text{on}& \  \tilde{X} \times \{t=0\},
\end{array}
\right.
\end{eqnarray}
with smooth initial and radical distributions $\rho_0, \rho_r \in C_{per}^{\infty}(\tilde{X}) \cap \mathcal{P}_{e}(\tilde{X})$ for now. 
By standard results on linear parabolic PDEs~\cite[Chapter 7]{Evans}, there exists a sequence $\{\rho_n : \ n\geq 0 \}$ in $C^{\infty}(0,T;C_{per}^{\infty}(\tilde{X}))$ that satisfies~\eqref{pdeseq}. 
Furthermore, using the estimate~\eqref{G2} in Lemma~\ref{Lem: G}, one can follow the same procedure provided in~\cite[Proposition 3.1]{Chazelle15} to show $\Vert \rho_n (t) \Vert_{L^1} = \Vert \rho_n (0) \Vert_{L^1} = 2$, and hence, $\rho_n(t) \geq 0$ for all $n \geq 1$ and $t \geq 0$; see also~\cite[Corollary 3.2]{Chazelle15}.

\begin{Rem}[Evenness of $\rho_n$] 
One can use the evenness of $\rho_0$ and $\rho_r$ to show that the unique solutions $\rho_n$ to PDEs~\eqref{pdeseq} are also even in $x$ for all $t \geq 0$. 
However, since this property will not be used for existence, uniqueness and regularity results provided below, we will postpone this argument to later when we deal with the evenness of the unique solution to PDE~\eqref{pde2}. 
\end{Rem}

\textit{Existence with smooth data.} Using Lemmas~\ref{Lem: G} and~\ref{Lem: Gx} and following a similar idea as in~\cite[Lemmas 3.5 and 3.7]{Chazelle15}, we can obtain the following convergence results for a limiting object $\br$
\begin{eqnarray} \label{converg}
\rho_n \rightarrow \br \ &\text{in}& \  L^1(0,T;L^1_{per}(\tilde{X})), \label{converg1}  \\
\rho_{n_k} \rightharpoonup \br  \ &\text{in}& \  L^2(0,T;H^1_{per}(\tilde{X})), \label{converg2} \\
\partial_t \rho_{n_k} \rightharpoonup \br_t \ &\text{in}& \  L^2(0,T;H^{-1}_{per}(\tilde{X})), \label{converg3}
\end{eqnarray}
where $n_k$ denotes a subsequence. Moreover, we have the following estimate for $\{\rho_n: \ n \geq 1\}$ and $\br$
\begin{equation} \label{estim}
\Vert \rho \Vert_{L^{\infty}(0,T;L^2)} + \Vert \rho \Vert_{L^2(0,T;H^1)} + \Vert \rho_t \Vert_{L^2(0,T;H^{-1})} \leq C[T] \ \Vert \rho_0 \Vert_{L^2}.
\end{equation}
We claim that $\br$ is the unique weak solution to~\eqref{pde2}. 
That is, $\br$ solves the weak formulation of~\eqref{pde2} defined as
\begin{equation} \label{pdeweak}
\int_0^T \langle \eta, \rho_t \rangle \ \diff t + \int_0^T \int_{\tilde{X}} \left( \frac{\sigma^2}{2} \rho_x + \rho \ G_{\rho} \right) \ \eta_x \ \diff x \diff t = 0,
\end{equation}
for any $\eta \in L^2(0,T;H^1_{per}(\tilde{X}))$. To show this, we multiply~\eqref{pdeseq} by $\eta$ with $n = n_k$ and integrate to obtain
\begin{equation} \label{E1}
\int_0^T \langle \eta , \partial_t \rho_{n_k} \rangle \ \diff t + \frac{\sigma^2}{2} \int_0^T \int_{\tilde{X}} \partial_x \rho_{n_k} \ \eta_x \ \diff x \diff t + \int_0^T \int_{\tilde{X}} \rho_{n_k} \ G_{\rho_{n_k-1}} \ \eta_x \ \diff x \diff t = 0.
\end{equation}
For the first two terms in~\eqref{E1}, using convergence results~\eqref{converg3} and~\eqref{converg2}, we have as $k \rightarrow \infty$
\begin{align*}
\int_0^T \langle \eta , \partial_t \rho_{n_k} \rangle \ \diff t &\rightarrow \int_0^T  \langle \eta, \br_t \rangle \ \diff t,
\end{align*}
and
\begin{align*}
\int_0^T \int_{\tilde{X}} \partial_x \rho_{n_k} \ \eta_x \ \diff x \diff t \rightarrow  \int_0^T \int_{\tilde{X}} \br_x \eta_x \ \diff x \diff t.
\end{align*}
Also, the last term in~\eqref{E1} can be written as
\begin{align} \label{E2}
&\int_0^T \int_{\tilde{X}} (\rho_{n_k} - \br) \ G_{\rho_{n_k-1}} \ \eta_x \ \diff x \diff t + \int_0^T \int_{\tilde{X}} \br \ (w \star ( \rho_{n_k-1} -\br)) \ \eta_x \ \diff x \diff t + \int_0^T \int_{\tilde{X}} \br \ G_{\br} \ \eta_x \ \diff x \diff t.
\end{align}
The limit of the first integral in~\eqref{E2} is zero as $k \rightarrow \infty $. 
Indeed, we know $G_{\rho_{n_k-1}}$ is bounded by the inequality~\eqref{G1} in Lemma~\ref{Lem: G}, hence, $\eta_xG_{\rho_{n_k-1}} \in L^2(0,T;L^2_{per}(\tilde{X}))$. Also,~\eqref{converg2} implies $\rho_{n_k} \rightharpoonup \br$ in $L^2(0,T;L^2_{per}(\tilde{X}))$. 
The limit of the second integral in~\eqref{E2} is also zero as $k \rightarrow \infty$. For this integral, we have
\begin{align*}
\int_0^T \int_{\tilde{X}} \br \ (w \star ( \rho_{n_k-1} -\br)) \ \eta_x \ \diff x \diff t &\leq \Vert \br \Vert_{L^{\infty}(0,T;L^2)} \ \Vert \eta_x \Vert_{L^2(0,T;L^2)} \ \Vert w \star ( \rho_{n_k-1} -\br) \Vert_{L^2(0,T;L^2)} \nonumber \\
&\leq C[T] \ \Vert \rho_0 \Vert_{L^2} \ \Vert \eta \Vert_{L^2(0,T;H^1)} \ \left( \int_0^T \Vert \rho_{n_k-1} -\br \Vert^2_{L^1} \ \diff t \right)^{\frac{1}{2}},
\end{align*} 
where for the second inequality we used~\eqref{estim} and the fact that 
$|w \star ( \rho_{n_k-1} -\br)| \leq C \ \Vert \rho_{n_k-1} -\br \Vert_{L^1}$  
by Lemma~\ref{G} (set $M=0$ in~\eqref{G1}). Now, notice 
$\Vert \rho_{n_k-1} -\br \Vert_{L^1(\tilde{X})} \leq \Vert \rho_{n_k-1} \Vert_{L^1(\tilde{X})} + \Vert \br \Vert_{L^1(\tilde{X})} \leq 4$. 
Hence,
\begin{align*}
\int_0^T \Vert \rho_{n_k-1} -\br \Vert^2_{L^1} \ \diff t &\leq 4 \ \int_0^T \Vert \rho_{n_k-1} -\br \Vert_{L^1} \ \diff t = 4 \ \Vert \rho_{n_k-1} -\br \Vert_{L^1(0,T;L^1)} \rightarrow 0,
\end{align*}
as $k \rightarrow \infty$ by the strong convergence~\eqref{converg1}. Putting all these results together, we see that $\br$ indeed satisfies the weak formulation~\eqref{pdeweak}.

To complete the existence result, we have to show $\br(x,0) = \rho_0(x)$. This condition makes sense since $\br \in C(0,T;L^2_{per}(\tilde{X}))$ by~\cite[Theorem 3.8]{Chazelle15} and the convergence results~\eqref{converg2} and~\eqref{converg3}.
Pick some $\eta \in C^1(0,T;H^1_{per}(\tilde{X}))$ with $\eta(T) = 0$ and rewrite the weak formulation~\eqref{pdeweak} as
\begin{equation} \label{E4}
-\int_0^T \langle \br, \eta_t \rangle \ \diff t + \int_0^T \int_{\tilde{X}} \left( \frac{\sigma^2}{2} \br_x + \br \ G_{\br}\right) \ \eta_x \ \diff x \diff t = \int_{\tilde{X}} \br (x,0) \ \eta (x,0) \ \diff x. 
\end{equation}
Similarly, since $\rho_{n_k}(x,0) = \rho_0(x)$, we have
\begin{equation} \label{E3}
-\int_0^T \langle \rho_{n_k}, \eta_t \rangle \ \diff t + \int_0^T \int_{\tilde{X}} \left( \frac{\sigma^2}{2} \partial_x \rho_{n_k} + \rho_{n_k} G_{\rho_{n_k}} \right) \ \eta_x \ \diff x \diff t = \int_{\tilde{X}} \rho_0 (x) \ \eta (x,0) \ \diff x. 
\end{equation}
Let $k \rightarrow \infty$ in~\eqref{E3}, so for arbitrary $\eta (x,0)$ we obtain from~\eqref{E3} and~\eqref{E4} that 
\begin{equation*}
\int_{\tilde{X}} \br (x,0) \ \eta (x,0) \ \diff x  = \int_{\tilde{X}} \rho_0 (x) \ \eta (x,0) \ \diff x,
\end{equation*}
which implies $\br (x,0) = \rho_0 (x)$.

\textit{Relaxed regularity on data.} 
In order to relax regularity assumption on data to $\rho_0, \rho_r \in L_{per}^2(\tilde{X}) \cap \mathcal{P}_{e}(\tilde{X})$, we can use the mollified version of the distributions $\rho^{\epsilon}_0 = \phi_{\epsilon} \star \rho_0$ and $\rho^{\epsilon}_r = \phi_{\epsilon} \star \rho_r$ with the standard positive mollifier $\phi_{\epsilon}$, follow the same procedure and take the limit $\epsilon \rightarrow 0$ at the end. 
See also~\cite[Theorem 3.12]{Chazelle15} for the details of this process.

\textit{Uniquness.} Let $\xi = \br_1 - \br_2$ where $\br_1$ and $\br_2$ are two weak solutions to~\eqref{pde2} with $\rho_0, \rho_r \in L_{per}^2(\tilde{X}) \cap \mathcal{P}_{e}(\tilde{X})$. Then, for every $\eta \in L^2(0,T;H^1_{per}(\tilde{X}))$ we have
\begin{align*}
\int_0^T \langle \eta , \xi_t \rangle \ \diff t &+ \frac{\sigma^2}{2} \int_0^T \int_{\tilde{X}} \xi_x \ \eta_x \ \diff x \diff t + \int_0^T \int_{\tilde{X}} ( \br_1 \ G_{\br_1} - \br_2 \ G_{\br_2} ) \ \eta_x \ \diff x \diff t = 0.
\end{align*} 
We can rewrite the last integrand as
\begin{align*}
\br_1 \ G_{\br_1} - \br_2 \ G_{\br_2} & = \br_1 (w \star ( \br_1 + M \rho_r)) -  \br_2 (w \star ( \br_2 + M \rho_r)) \\
& = (\br_1 - \br_2)(w \star ( \br_1 + M \rho_r)) + \br_2 (w \star (\br_1 - \br_2)) \\
&=  \xi \ G_{\br_1} + \br_2 \ (w \star \xi),
\end{align*}
to obtain
\begin{equation} \label{U1}
\int_0^T \langle \eta , \xi_t \rangle \ \diff t + \frac{\sigma^2}{2} \int_0^T \int_{\tilde{X}} \xi_x \ \eta_x \ \diff x \diff t = - \int_0^T \int_{\tilde{X}} \xi \ G_{\br_1} \ \eta_x \ \diff x \diff t - \int_0^T \int_{\tilde{X}} \br_2 \ (w \star \xi) \ \eta_x \ \diff x \diff t .
\end{equation}
Now, for the first integral on the r.h.s. of~\eqref{U1}, we have
\begin{align} \label{U2}
\left| \int_0^T \int_{\tilde{X}} \xi \ G_{\br_1} \ \eta_x \ \diff x \diff t \right| &\leq 2R(1+M) \ \Vert \xi \Vert_{L^2(0,T;L^2)} \ \Vert \eta_x \Vert_{L^2(0,T;L^2)} \nonumber \\
&\leq \frac{\sigma^2}{4} \ \Vert \eta_x \Vert^2_{L^2(0,T;L^2)} + C_1 \ \Vert \xi \Vert^2_{L^2(0,T;L^2)},
\end{align}
where for the first inequality we used~\eqref{G1} in Lemma~\ref{Lem: G} and Cauchy-Schwarz inequality, and for the second inequality we used Young's inequality. 
Similarly, for the the second integral on the r.h.s. of~\eqref{U1}, we have
\begin{align} \label{U3}
\left| \int_0^T \int_{\tilde{X}} \br_2 \ (w \star \xi) \ \eta_x \ \diff x \diff t \right| &\leq \Vert \br_2 \Vert_{L^{\infty}(0,T;L^2)} \ \Vert \eta_x \Vert_{L^2(0,T;L^2)} \ \Vert w \star \xi \Vert_{L^2(0,T;L^2)} \nonumber \\
&\leq C_2[T] \ \Vert \rho_0 \Vert_{L^2} \ \Vert \eta_x \Vert_{L^2(0,T;L^2)} \ \Vert \xi \Vert_{L^2(0,T;L^2)} \nonumber \\
&\leq \frac{\sigma^2}{4} \ \Vert \eta_x \Vert^2_{L^2(0,T;L^2)} + C_2[T] \ \Vert \rho_0 \Vert^2_{L^2} \ \Vert \xi \Vert^2_{L^2(0,T;L^2)},
\end{align}
where for the second inequality we used~\eqref{estim} and Lemma~\ref{Lem: G} (see~\eqref{G2} and~\eqref{finmeasure}). 
Using~\eqref{U2} and~\eqref{U3} for~\eqref{U1} and setting $\eta = \xi$, we obtain
\begin{equation*}
\int_0^T \langle \xi , \xi_t \rangle \ \diff t \leq \left( C_1+C_2[T] \ \Vert \rho_0 \Vert^2_{L^2} \right)  \ \Vert \xi \Vert^2_{L^2(0,T;L^2)}.
\end{equation*}
By~\cite[Theorem 3.8]{Chazelle15}, we know 
$$ \langle \xi , \xi_t \rangle = \frac{1}{2} \frac{\diff}{\diff t} \Vert \xi (t) \Vert^2_{L^2}.$$
Thus, for all $T$, we have
\begin{equation*}
\frac{1}{2} \int_0^T \frac{\diff}{\diff t} \Vert \xi (t) \Vert^2_{L^2} \ \diff t \leq \left( C_1+C_2[T] \ \Vert \rho_0 \Vert^2_{L^2} \right)  \ \int_0^T \Vert \xi (t) \Vert^2_{L^2} \diff t.
\end{equation*}
This implies, for a.e. $t \in [0,T]$
\begin{equation*}
\frac{\diff}{\diff t} \Vert \xi (t) \Vert^2_{L^2} \leq C[T, \rho_0] \ \Vert \xi (t) \Vert^2_{L^2}.
\end{equation*}
Hence, by  Gr\"{o}nwall's inequality, 
\begin{equation*} \label{U4}
\Vert \xi (t) \Vert^2_{L^2} \leq C[T, \rho_0] \ \Vert \xi (0) \Vert^2_{L^2}.
\end{equation*}
This implies $\Vert \xi (t) \Vert_{L^2} = \Vert \br_1(t) - \br_2(t) \Vert_{L^2} = 0$ since $\xi (0) = \rho_0 - \rho_0 = 0$. 
Then, from continuity of $\br_1$ and $\br_2$ in time (by~\cite[Theorem 3.8]{Chazelle15}), we obtain uniqueness. That is, $\br_1 = \br_2$ for all $t \in [0,T]$.

\textit{Regularity.} Here, we first mollify the problem data $\rho_0$ and $\rho_r$ with the standard positive mollifier $\phi_{\epsilon}$ so that the solutions $\{\rho_n : \ n\geq 0 \}$ to~\eqref{pdeseq} are all smooth. This allows us to take derivatives of~\eqref{pdeseq} to any order. We then take the limit $\epsilon \rightarrow \infty$ at the end. For simplicity, we omit the arguments for this last step and drop the subscript $\epsilon$. 

Employing Lemma~\ref{Lem: pG}, we can extend the improved regularity results in space in~\cite{Chazelle15} (Theorem 4.2). 
That is, for $\rho_0 \in H_{per}^k(\tilde{X}) \cap \mathcal{P}_{e}(\tilde{X})$ and $\rho_r \in H_{per}^{k-1}(\tilde{X}) \cap \mathcal{P}_{e}(\tilde{X})$, we have
\begin{equation} \label{R2}
\br \in L^2(0,T;H^{k+1}_{per}(\tilde{X})) \cap L^{\infty}(0,T;H^{k}_{per}(\tilde{X})).
\end{equation}
Moreover, since $\rho_r$ is constant in time, we can also employ the results on improved regularity in time provided by~\cite[Theorem 4.3]{Chazelle15} for our model. This means, for $\rho_0 \in H_{per}^{2k}(\tilde{X}) \cap \mathcal{P}_{e}(\tilde{X})$ and $\rho_r \in L_{per}^{2}(\tilde{X}) \cap \mathcal{P}_{e}(\tilde{X})$, we have for $i \leq k$
\begin{equation} \label{R3}
\partial^i_t \br \in L^2(0,T;H^{2k-2i+1}_{per}(\tilde{X})) \cap L^{\infty}(0,T;H^{2k-2i}_{per}(\tilde{X})),
\end{equation}
and
\begin{equation} \label{R4}
\partial^{k+1}_t \br \in L^2(0,T;H^{-1}_{per}(\tilde{X})).
\end{equation}
With these regularity results in space and time, we can derive the required regularity on the solution as stated in Theorem~\ref{wellposed}. 
Let $\rho_0 \in H_{per}^3(\tilde{X}) \cap \mathcal{P}_{e}(\tilde{X})$ and $\rho_r \in H_{per}^{2}(\tilde{X}) \cap \mathcal{P}_{e}(\tilde{X})$ and  $\br$ be the unique weak solution to PDE~\eqref{pde2}. 
By~\eqref{R2}, we have $\br \in L^{\infty}(0,T;H^{3}_{per}(\tilde{X}))$. 
Hence, by Sobolev embedding theorem~\cite[Section 4.12]{SobolevAdams}, we have $\br(t) \in C_{per}^2(\bar{\tilde{X}})$ (after possibly being redefined on a set of measure zero). This gives the required regularity in space. 
Also,~\eqref{R3} and~\eqref{R4} imply that $\br_t \in L^2(0,T;H^{1}_{per}(\tilde{X}))$ and $\br_{tt} \in L^2(0,T;H^{-1}_{per}(\tilde{X}))$. 
Hence, by~\cite[Theorem 3.8]{Chazelle15}, we have $\br_t \in C(0,T;L^2_{per}(\tilde{X}))$ (after possibly being redefined on a set of measure zero). This gives the required regularity in time.
Putting these results together, we have $\br \in C^1(0,T;C_{per}^2(\tilde{X}))$.

\textit{Evenness.} The evenness imposed on $\rho_0$ and $\rho_r$ implies that if $\rho(x,t)$ is a solution of~\eqref{pde2}, then $\rho(-x,t)$ is also a solution. Indeed, from~\eqref{pde2} we obtain
\begin{align*}
\partial_t \rho (-x,t) - \frac{\sigma^2}{2} \partial^2_{x} \rho (-x,t) &= \partial_x \left( \rho (-x,t) \int w(-x-y) \ (\rho(y,t) + M \rho_r(y)) \ \diff y \right) \\
&= \partial_x \left( \rho (-x,t) \int w(-x+y) \ (\rho(-y,t) + M \rho_r(-y)) \ (-\diff y) \right) \\
&= \partial_x \left( \rho (-x,t) \int -w(-x+y) \ (\rho(-y,t) + M \rho_r(y)) \ \diff y \right) \\
&= \partial_x \left( \rho (-x,t) \int w(x-y) \ (\rho(-y,t) + M \rho_r(y)) \ \diff y \right),
\end{align*}
where for that last equality we used the fact that $w$ is an odd function.
Then, assuming $\rho_0 \in H_{ep}^3(\tilde{X}) \cap \mathcal{P}_{e}(\tilde{X})$ and $\rho_r \in H_{ep}^{2}(\tilde{X}) \cap \mathcal{P}_{e}(\tilde{X})$ (notice that $H_{ep}^{k}(\tilde{X}) \subset H_{per}^{k}(\tilde{X})$), the \emph{uniqueness} of the solution $\br \in C^1(0,T;C_{per}^2(\tilde{X}))$ to PDE~\eqref{pde2} implies that the solution is even, that is, $\br \in C^1(0,T;C_{ep}^2(\tilde{X}))$.

\textit{Positivity.} Using the same approach as in~\cite{Carrillo18}, we consider the following version of~\eqref{pde2} in the unknown function $\rho$ with $\br$ being the non-negative weak solution
\begin{equation*}
\rho_t = (\rho \ G_{\br})_x+\frac{\sigma^2}{2} \rho_{xx}.
\end{equation*}
This is a linear parabolic PDE with smooth and bounded coefficients (by Lemmas~\ref{Lem: G} and~\ref{Lem: Gx}) for which $\br$ is a classic non-negative solution. 
Thus, by parabolic Harnack inequality~\cite[Section 7.1.4, Theorem 10]{Evans}, we have
$$\sup_{x \in \tilde{X}} \br(x,t_1) \leq c \inf_{x \in \tilde{X}} \br(x,t_2),$$
for $0 < t_1 < t_2 < \infty$ and some positive constant $c$.
Non-negativity of $\br(x,t)$ implies that $\inf_{x \in \tilde{X}} \br (x,t)$ and hence $\br(x,t)$ is strictly positive for all $t>0$.
\end{proof}
\section{Stationary Behavior and Stability} 
\label{sec:stationary and stability}

\subsection{Existence of Stationary Solution} 
\label{sec:stationary existence}

This section mainly concerns the proof of \emph{existence} result in Theorem~\ref{StatinarySolution} for stationary equation~\eqref{pdes}. 
All the norms in this section are w.r.t. $X = [0,1]$  (as opposed to $\tilde{X} = [-1,1]$), unless indicated otherwise. We note that norms on the even $2$-periodic spaces computed w.r.t. to $X$ and $\tilde{X}$
differ by a multiplicative constant, e.g., $\Vert u \Vert_{L^p(\tilde{X})} = 2^{\frac{1}{p}} \Vert u \Vert_{L^p(X)}$. 
We again use $C, C_0, C_1, \ldots$ to represent a generic constant (depending on the model parameters) which actual values may change from line to line. 
In case these constants depend on a particular object of interest, say $\theta$, this dependence is explicitly indicated by $C[\theta]$.

Let us begin with providing a fixed point characterization of the solution to stationary equation~\eqref{pdes}. We note that, corresponding to the solution to dynamic equation~\eqref{pde2}, we are particularly interested in \emph{even} solutions $\rho^s \in \mathcal{P}_{e}(\tilde{X})$ of stationary equation~\eqref{pdes}.

\begin{Lem} [Fixed point characterization] \label{Lem: fixed point}
$\rho^s \in C^{2}_{ep}(\tilde{X}) \cap \mathcal{P}_{e}(\tilde{X})$ is a solution of stationary equation~\eqref{pdes} if and only if $\rho^s$ is a fixed point of the operator $\mathcal{T}: \mathcal{P}_{e}(\tilde{X}) \rightarrow \mathcal{P}_{e}(\tilde{X})$ defined by
\begin{equation} \label{operator}
\mathcal{T} \rho := \frac{1}{K} \exp \left( -\frac{2}{\sigma^2}\int_{0}^{x} G_{\rho}(z) \ \diff z \right),
\end{equation}
where the constant $K$ is determined by the normalizing condition
$$
K = \int_{0}^{1} \exp \left(-\frac{2}{\sigma^2}\int_{0}^{x} G_{\rho}(z) \ \diff z \right) \diff x.
$$
\end{Lem}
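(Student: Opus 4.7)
The plan is to integrate the stationary equation~\eqref{pdes} once in $x$, to identify the constant of integration using the symmetry of the problem at the origin, and then to recognize the resulting first-order linear ODE in $\rho^s$ as one whose unique normalized solution is precisely the image under $\mathcal{T}$.

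\textbf{Forward direction.} Suppose $\rho^s \in C^{2}_{ep}(\tilde{X}) \cap \mathcal{P}_{e}(\tilde{X})$ solves~\eqref{pdes}. Rewriting~\eqref{pdes} as $\bigl(\tfrac{\sigma^2}{2}\rho^s_x + \rho^s G_{\rho^s}\bigr)_x = 0$ gives
$$\tfrac{\sigma^2}{2}\rho^s_x(x) + \rho^s(x)\, G_{\rho^s}(x) \;=\; c \quad\text{for some } c \in \R.$$
To pin down $c$, I evaluate at $x = 0$: since $\rho^s$ is even and $C^1$, one has $\rho^s_x(0)=0$; and since $w$ is odd while $\rho^s + M\rho_r$ is even, a change of variables shows that $G_{\rho^s}$ is odd, hence $G_{\rho^s}(0) = 0$. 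Therefore $c = 0$, and $\rho^s$ satisfies the linear first-order ODE $\rho^s_x = -\tfrac{2}{\sigma^2}\, G_{\rho^s}\,\rho^s$. Because $\int_0^1 \rho^s = 1$ excludes the trivial solution, ODE uniqueness yields $\rho^s(0) > 0$, and integrating explicitly gives
$$\rho^s(x) \;=\; \rho^s(0)\, \exp\Bigl(-\tfrac{2}{\sigma^2}\!\int_0^x G_{\rho^s}(z)\,\diff z\Bigr).$$
The normalization $\int_0^1 \rho^s = 1$ then fixes $\rho^s(0) = 1/K$, which is exactly the statement $\rho^s = \mathcal{T}\rho^s$.

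\textbf{Reverse direction.} Conversely, if $\rho^s \in C^{2}_{ep}(\tilde{X}) \cap \mathcal{P}_{e}(\tilde{X})$ satisfies $\rho^s = \mathcal{T}\rho^s$, then differentiating the explicit exponential formula yields $\rho^s_x = -\tfrac{2}{\sigma^2}\rho^s G_{\rho^s}$, and one further derivative recovers~\eqref{pdes}. It remains to confirm that $\mathcal{T}$ is a well-defined self-map of $\mathcal{P}_{e}(\tilde{X})$: strict positivity and $\int_0^1 \mathcal{T}\rho = 1$ are built into the exponential form together with the choice of $K$; evenness of $\mathcal{T}\rho$ follows from the oddness of $G_\rho$ via the substitution $z\mapsto -z$ in the exponent (which leaves the integral $\int_0^x G_\rho$ unchanged when $x$ is replaced by $-x$); and $2$-periodicity follows because $\int_0^2 G_\rho(z)\,\diff z = \int_{-1}^1 G_\rho(z)\,\diff z = 0$ by oddness.

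\textbf{Main obstacle.} The argument is essentially symmetry bookkeeping, and the only step requiring genuine care is justifying that the constant of integration vanishes, which hinges crucially on the joint evenness of $\rho^s$ and $\rho_r$ to render $G_{\rho^s}$ odd; without this the reduction to a linear homogeneous ODE, and hence the exponential fixed-point formula, would fail. Strict positivity of $\rho^s(0)$ is handled by appealing to uniqueness of the trivial solution of the linear ODE rather than by any regularity theory.
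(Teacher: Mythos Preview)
Your proof is correct and follows essentially the same approach as the paper: integrate~\eqref{pdes} once, use parity to show the constant of integration vanishes, and solve the resulting linear first-order ODE to obtain the exponential fixed-point formula. The only cosmetic difference is that you kill the constant by evaluating at $x=0$ (using $\rho^s_x(0)=0$ and $G_{\rho^s}(0)=0$), whereas the paper substitutes $x\mapsto -x$ into the first integral to see that the left-hand side is an odd function of $x$, forcing $C=-C$; both arguments hinge on exactly the same parity observation (oddness of $G_{\rho^s}$), and your additional verification that $\mathcal{T}$ preserves evenness and $2$-periodicity is a welcome detail the paper leaves implicit.
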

\begin{proof}
The ``if'' part is clear since any fixed point $\rho^s \in C^{2}_{ep}(\tilde{X})$ satisfies the stationary equation~\eqref{pdes}. For the ``only if'' part, note that integrating~\eqref{pdes} once, we have
\begin{equation} \label{temp1}
\frac{\sigma^2}{2}\rho_x+\rho \ G_{\rho}=C.
\end{equation}
Now notice that we can set $C=0$ since we are interested in \emph{even} solutions to~\eqref{temp1}. 
Indeed, from~\eqref{temp1} we have
\begin{equation*}
\frac{\sigma^2}{2}\rho_x (-x)+\rho (-x) [w(-x) \star (\rho(-x) + M \rho_r(-x))]=C.
\end{equation*}
Hence, for an even solution, we obtain
\begin{equation*}
-\frac{\sigma^2}{2}\rho_x (x)-\rho (x) [w(x) \star (\rho(x) + M \rho_r(x))]=C,
\end{equation*}
where we used the fact that $w$ is an odd function. This implies $C=0$.
Rearranging and integrating~\eqref{temp1} once again, we have
\begin{equation}\label{StaSol}
\rho(x)= \frac{1}{K} \exp \left(-\frac{2}{\sigma^2}\int_{0}^{x} G_{\rho}(z) \ \diff z\right),
\end{equation}
where the normalizing condition gives the constant $K$ as
$$
K = \int_{0}^{1} \exp \left(-\frac{2}{\sigma^2}\int_{0}^{x} G_{\rho}(z) \ \diff z\right)\diff x.
$$
This completes the proof.
\end{proof}

This characterization allows us to use tools from operator theory, in particular, Schauder fixed point theorem to derive existence result for the stationary solution. Before that, we present some preliminary results for the operator $\mathcal{T}$.

\begin{Lem} [Estimates for $\mathcal{T}$] \label{Lem: T}
Let $\mathcal{T}$ be the operator on $\mathcal{P}_{e}(\tilde{X})$ defined by~\eqref{operator}. 
\begin{itemize}
\item If $\rho, \rho_r \in \mathcal{P}_{e}(\tilde{X})$, then 
\begin{equation} \label{T1}
\Vert \mathcal{T} \rho \Vert_{L^{\infty}} \leq \exp \left( \frac{8R(1+M)}{\sigma^2} \right),
\end{equation}
and
\begin{equation} \label{T2}
\Vert \partial_x \mathcal{T} \rho \Vert_{L^{\infty}} \leq \frac{4R(1+M)}{\sigma^2} \exp \left( \frac{8R(1+M)}{\sigma^2} \right).
\end{equation}
\item If $\rho, \rho_r \in L_{ep}^2(\tilde{X}) \cap \mathcal{P}_{e}(\tilde{X})$, then
\begin{equation} \label{T3}
\Vert \mathcal{T} \rho \Vert_{H^{2}} \leq C[\Vert \rho_r \Vert_{L^{2}}] \ \Vert \rho \Vert_{L^{2}}.
\end{equation}
\item If $\rho, \rho_r \in H_{ep}^{k-2}(\tilde{X}) \cap \mathcal{P}_{e}(\tilde{X})$, then for $k \geq 3$
\begin{equation} \label{T4}
\Vert \mathcal{T} \rho \Vert_{H^{k}} \leq \sum_{i=1}^{k-1} C_i[\Vert \rho_r \Vert_{H^{k-2}}] \ \Vert \rho \Vert^i_{H^{k-2}}.
\end{equation}
\end{itemize}
\end{Lem}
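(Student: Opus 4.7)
\emph{Approach.} The strategy is to exploit the explicit exponential form of $\mathcal{T}\rho$ together with the bounds on $G_\rho$ and its derivatives from Lemmas~\ref{Lem: G} and~\ref{Lem: Gx}. The algebraic workhorse is the identity
\[
\partial_x \mathcal{T}\rho \,=\, -\frac{2}{\sigma^2}\,G_\rho\,\mathcal{T}\rho,
\]
obtained by direct differentiation of~\eqref{operator}, from which every derivative of $\mathcal{T}\rho$ can be unwound as a product of $\mathcal{T}\rho$ itself and derivatives of $G_\rho$.

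\emph{The $L^\infty$ estimates.} Since $\rho,\rho_r\in\mathcal{P}_e(\tilde X)$ satisfy $\Vert\rho\Vert_{L^1(\tilde X)}=\Vert\rho_r\Vert_{L^1(\tilde X)}=2$, Lemma~\ref{Lem: G} gives $\Vert G_\rho\Vert_{L^\infty}\leq 2R(1+M)$. Hence $\bigl|\int_0^x G_\rho(z)\,\diff z\bigr|\leq 2R(1+M)$ on $[0,1]$, so the exponential in~\eqref{operator} is pointwise sandwiched between $\exp(\pm 4R(1+M)/\sigma^2)$, and the same sandwich holds for the normalizing constant $K$. Taking ratios yields~\eqref{T1}, and plugging this into the differential identity above gives~\eqref{T2}. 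For the $H^2$ bound, expand
\[
\partial_{xx}\mathcal{T}\rho \,=\, -\tfrac{2}{\sigma^2}(\partial_x G_\rho)\,\mathcal{T}\rho \,+\, \tfrac{4}{\sigma^4}\,G_\rho^{\,2}\,\mathcal{T}\rho,
\]
take $L^2$ norms, and bound each factor by combining~\eqref{T1}, the $L^\infty$ bound on $G_\rho$, and Lemma~\ref{Lem: Gx}(i) with $p=2$. Because $\rho$ is a probability density on $X$, we have the uniform lower bound $\Vert\rho\Vert_{L^2}\geq C_0>0$, which allows the absorption of any purely constant terms into the multiplicative factor $\Vert\rho\Vert_{L^2}$, producing~\eqref{T3}.

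\emph{The $H^k$ estimate for $k\geq 3$.} I would argue by induction on $k$ via the Leibniz expansion
\[
\partial_x^k\mathcal{T}\rho \,=\, -\tfrac{2}{\sigma^2}\sum_{j=0}^{k-1}\binom{k-1}{j}\,(\partial_x^j G_\rho)\,(\partial_x^{k-1-j}\mathcal{T}\rho).
\]
Taking $L^2$ norms, each derivative of $G_\rho$ of order $j\geq 1$ is controlled by $C[\Vert\rho_r\Vert_{H^{k-2}}]\,\Vert\rho\Vert_{H^{k-2}}$ via Lemma~\ref{Lem: Gx} (part~(i) for $j=1$, part~(ii) for $j\geq 2$), while $\Vert G_\rho\Vert_{L^\infty}$ is dominated by a $\rho$-independent constant from Lemma~\ref{Lem: G}. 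The lower-order factors $\partial_x^{k-1-j}\mathcal{T}\rho$ are absorbed into the inductive hypothesis, and Morrey's embedding $H^1_{per}(\tilde X)\hookrightarrow L^\infty(\tilde X)$ (valid in one space dimension) is used to exchange $L^\infty$ and $H^1$ norms whenever a product of factors must be placed in $L^2$.

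\emph{Main obstacle.} The chief technical point is the bookkeeping of the polynomial degree in~\eqref{T4}. The powers $\Vert\rho\Vert^i_{H^{k-2}}$ with $i\in\{1,\ldots,k-1\}$ count the number of $G_\rho$-derivative factors produced after fully unfolding the recursion; because $\mathcal{T}\rho$ itself is bounded in $L^\infty$ by a $\rho$-independent constant from~\eqref{T1}, the maximal power saturates at $k-1$ rather than $k$. A clean way to organize this is to prove, by induction, a stronger inequality of the form
\[
\Vert\mathcal{T}\rho\Vert_{H^k}\,\leq\,\Phi_k\bigl(\Vert\rho\Vert_{H^{k-2}};\Vert\rho_r\Vert_{H^{k-2}}\bigr),
\]
where $\Phi_k$ is a polynomial of degree at most $k-1$ in its first argument, and to verify at each inductive step that the Leibniz decomposition never produces higher-degree contributions.
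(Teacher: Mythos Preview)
Your proposal is correct and follows essentially the same approach as the paper for \eqref{T1}--\eqref{T3}. For \eqref{T4} the paper obtains the recursion more compactly by invoking Lemma~\ref{Lem: pG} to bound $\Vert\partial_x^k\mathcal{T}\rho\Vert_{L^2}\leq C\Vert\mathcal{T}\rho\,G_\rho\Vert_{H^{k-1}}\leq C[\Vert\rho_r\Vert_{H^{k-2}}]\,\Vert\rho\Vert_{H^{k-2}}\,\Vert\mathcal{T}\rho\Vert_{H^{k-1}}$ in one step, then unfolds this recursion; your direct Leibniz expansion with Morrey embedding is exactly how Lemma~\ref{Lem: pG} itself is proved, so you are effectively inlining that lemma rather than citing it.
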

\begin{proof}
From the definition~\eqref{operator} and inequality~\eqref{G1} in Lemma~\ref{Lem: G} we obtain
\begin{align*} 
|\mathcal{T} \rho| &= \frac{\exp \left\{-\frac{2}{\sigma^2}\int_{0}^{x} G_{\rho}(z) \ \diff z\right\}}{\int_{0}^{1}\exp \left\{-\frac{2}{\sigma^2}\int_{0}^{x} G_{\rho}(z) \ \diff z\right\}\diff x}  \leq \frac{\exp \left\{ \frac{4R(1+M)}{\sigma^2} \right\} }{ \exp \left\{ -\frac{4R(1+M)}{\sigma^2} \right\} } =  \exp \left\{ \frac{8R(1+M)}{\sigma^2} \right\},
\end{align*}
which gives the estimate~\eqref{T1}.

Now, observe
\begin{align*} 
\Vert \partial_x \mathcal{T} \rho \Vert_{L^{\infty}} &=  \left\Vert -\frac{2}{\sigma^2} \ G_{\rho} \ \mathcal{T} \rho \right\Vert_{L^{\infty}} 
 \leq \frac{2}{\sigma^2} \ \Vert G_{\rho} \Vert_{L^{\infty}} \ \Vert \mathcal{T} \rho \Vert_{L^{\infty}}.
\end{align*}
Using~\eqref{G1} in Lemma~\ref{Lem: G} and~\eqref{T1}, we obtain the inequality~\eqref{T2}.

For the inequality~\eqref{T3}, first notice
\begin{align} \label{tempT1}
\Vert \mathcal{T} \rho \Vert_{H^{2}} &\leq C \ \left( \Vert \mathcal{T} \rho \Vert_{L^{2}} +\Vert \partial_x^2 \mathcal{T} \rho \Vert_{L^{2}} \right) \leq C_1 + C_2 \ \Vert \partial_x^2 \mathcal{T} \rho \Vert_{L^{2}},
\end{align}
where for the second inequality we used the fact that $\Vert \mathcal{T} \rho \Vert_{L^{2}} \leq C \ \Vert \mathcal{T} \rho \Vert_{L^{\infty}}$ is bounded by~\eqref{T1}. Also, we have
\begin{align*}
\Vert \partial_x^2 \mathcal{T} \rho \Vert_{L^{2}} &= \bigg\Vert -\frac{2}{\sigma^2} \ \left( \mathcal{T} \rho \ \partial_x G_{\rho} +  G_{\rho} \ \partial_x \mathcal{T} \rho \right) \bigg\Vert_{L^{2}}  \nonumber \\
&\leq C \ \left( \Vert \mathcal{T} \rho \Vert_{L^{\infty}} \  \Vert \partial_x G_{\rho} \Vert_{L^{2}} + \Vert G_{\rho} \Vert_{L^{\infty}} \  \Vert \partial_x \mathcal{T} \rho \Vert_{L^{2}}  \right) \nonumber \\
&\leq C[\Vert \rho_r \Vert_{L^{2}}] \ \Vert \rho \Vert_{L^{2}} + C_2 \nonumber \\
&\leq C[\Vert \rho_r \Vert_{L^{2}}] \ \Vert \rho \Vert_{L^{2}},
\end{align*}
where for the second inequality we used~\eqref{Gx1} in Lemma~\ref{Lem: Gx} and the last inequality follows from the fact that  
$\Vert \rho \Vert_{L^{2}} \geq \Vert  \rho \Vert_{L^{1}} > 0$ (see~\eqref{finmeasure}). Inserting this result in~\eqref{tempT1}, we obtain inequality~\eqref{T3}.

Similarly, for $k \geq 3$, we have (see~\eqref{tempT1})
\begin{align} \label{tempT2}
\Vert \mathcal{T} \rho \Vert_{H^{k}} \leq C_1 + C_2 \ \Vert \partial_x^k \mathcal{T} \rho \Vert_{L^{2}}.
\end{align}
Now, notice
\begin{align*} 
\Vert \partial_x^k \mathcal{T} \rho \Vert_{L^{2}} &= \Vert \partial_x^{k-1} \partial_x  \mathcal{T} \rho \Vert_{L^{2}} = \bigg\Vert \partial_x^{k-1} \left( -\frac{2}{\sigma^2} \ G_{\rho} \ \mathcal{T} \rho \right) \bigg\Vert_{L^{2}} = \frac{2}{\sigma^2} \ \Vert \partial_x^{k-1} \left( \mathcal{T} \rho \  G_{\rho}   \right) \Vert_{L^{2}} \\
&\leq C \ \Vert  \mathcal{T} \rho \  G_{\rho} \Vert_{H^{k-1}} \leq C[\Vert \rho_r \Vert_{H^{k-2}}] \ \Vert \rho  \Vert_{H^{k-2}} \ \Vert \mathcal{T} \rho  \Vert_{H^{k-1}},
\end{align*}
where for the last inequality we used Lemma~\ref{Lem: pG}. Combining this result with~\eqref{tempT2}, we derive a recursive inequality. Performing the recursive computations while keeping the highest Sobolev norms, we obtain
\begin{equation*}
\Vert \mathcal{T} \rho \Vert_{H^{k}} \leq C_0 + \sum_{i=1}^{k-1} C_i[\Vert \rho_r \Vert_{H^{k-2}}] \ \Vert \rho \Vert^i_{H^{k-2}}.
\end{equation*}
Then, since $ \Vert \rho \Vert^i_{H^{k-2}} \geq \Vert \rho \Vert_{L^{2}} \geq C \ \Vert  \rho \Vert_{L^{1}} > 0$, we can remove the constant $C_0$ and consider its effect in constants $C_i$. This gives the desired inequality~\eqref{T4}.
\end{proof}

\begin{Prop} [Lipschitz continuity of $\mathcal{T}$] \label{Prop: LipT}
Let $\mathcal{T}$ be the operator on $\mathcal{P}_{e}(\tilde{X})$ defined by~\eqref{operator} with $\rho_r \in \mathcal{P}_{e}(\tilde{X})$. Then $\mathcal{T}$ is Lipschitz continuous in $L^p$ for $1 \leq p < \infty$ with Lipschitz constant
\begin{equation} \label{LipConst}
L_{\mathcal{T}} = \frac{1}{2} \ \exp \left\{ \left( \frac{8R(1+M)}{\sigma^2} \right) \left(1-\frac{1}{p} \right) \right\} \ \left( \exp \left\{ \frac{16R}{\sigma^2} \right\} - 1 \right).
\end{equation}
\end{Prop}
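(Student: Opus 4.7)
The natural strategy is to exploit the multiplicative structure of $\mathcal{T}$ by working with the logarithm of the ratio $\mathcal{T}\rho_1/\mathcal{T}\rho_2$. Setting $F_\rho(x) := -\frac{2}{\sigma^2}\int_0^x G_\rho(z)\,\diff z$ and $K(\rho) := \int_0^1 e^{F_\rho(x)}\,\diff x$, so that $\mathcal{T}\rho = e^{F_\rho}/K(\rho)$, I would begin from the identity
\begin{equation*}
\mathcal{T}\rho_1(x) - \mathcal{T}\rho_2(x) = \mathcal{T}\rho_2(x)\,\bigl(e^{H(x)} - 1\bigr), \qquad H(x) := F_{\rho_1}(x) - F_{\rho_2}(x) - \ln\!\frac{K(\rho_1)}{K(\rho_2)},
\end{equation*}
which reduces the Lipschitz estimate to a uniform bound on $|e^H-1|$ that is linear in $\|\rho_1-\rho_2\|_{L^p}$.

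Two complementary bounds on $\|H\|_{L^\infty}$ form the analytical heart. Since the radical term $M\rho_r$ cancels in $F_{\rho_1}-F_{\rho_2}$, the crude estimate $|w\star f|\leq R\|f\|_{L^1(\tilde X)}$ underlying~\eqref{G1} applied with $f=\rho_1-\rho_2$ yields $\|F_{\rho_1}-F_{\rho_2}\|_{L^\infty}\leq \tfrac{2R}{\sigma^2}\|\rho_1-\rho_2\|_{L^1(\tilde X)}$. Rewriting $K(\rho_1)/K(\rho_2)=\int_0^1 e^{F_{\rho_1}-F_{\rho_2}}\,\mathcal{T}\rho_2\,\diff x$ displays this ratio as a weighted average with weight $\mathcal{T}\rho_2$, so $|\ln(K(\rho_1)/K(\rho_2))|\leq \|F_{\rho_1}-F_{\rho_2}\|_{L^\infty}$. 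Combining these with the evenness identity $\|\cdot\|_{L^1(\tilde X)}=2\|\cdot\|_{L^1(X)}$ and H\"older ($\|\cdot\|_{L^1(X)}\leq \|\cdot\|_{L^p(X)}$ since $\mu(X)=1$), and separately using the universal bound $\|\rho_i\|_{L^1(\tilde X)}=2$, I obtain simultaneously
\begin{equation*}
\|H\|_{L^\infty} \leq \tfrac{8R}{\sigma^2}\,\|\rho_1-\rho_2\|_{L^p}, \qquad \|H\|_{L^\infty} \leq \tfrac{16R}{\sigma^2}.
\end{equation*}

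The decisive step is convexity of $t\mapsto e^t-1$ on $\bigl[0,\tfrac{16R}{\sigma^2}\bigr]$, which gives the chord inequality $e^t-1\leq \tfrac{t}{16R/\sigma^2}\bigl(e^{16R/\sigma^2}-1\bigr)$. Composing with $|e^H-1|\leq e^{|H|}-1$ and inserting the two bounds on $\|H\|_{L^\infty}$ above delivers
\begin{equation*}
\bigl\|e^H-1\bigr\|_{L^\infty} \leq \tfrac{1}{2}\bigl(e^{16R/\sigma^2}-1\bigr)\,\|\rho_1-\rho_2\|_{L^p},
\end{equation*}
which accounts in one stroke for both the $\tfrac{1}{2}$ and the $(e^{16R/\sigma^2}-1)$ factor of $L_{\mathcal{T}}$. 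The exponential prefactor then emerges from the standard interpolation $\int_X\mathcal{T}\rho_2^p\,\diff x\leq \|\mathcal{T}\rho_2\|_{L^\infty}^{p-1}\|\mathcal{T}\rho_2\|_{L^1(X)}=\|\mathcal{T}\rho_2\|_{L^\infty}^{p-1}$ combined with the $L^\infty$ estimate $\|\mathcal{T}\rho_2\|_{L^\infty}\leq e^{8R(1+M)/\sigma^2}$ from~\eqref{T1}; taking $p$-th roots of
\begin{equation*}
\|\mathcal{T}\rho_1-\mathcal{T}\rho_2\|_{L^p}^p = \int_X \mathcal{T}\rho_2^p\,|e^H-1|^p\,\diff x \leq \|\mathcal{T}\rho_2\|_{L^\infty}^{p-1}\,\bigl\|e^H-1\bigr\|_{L^\infty}^p
\end{equation*}
reproduces $L_{\mathcal{T}}$ exactly.

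The only delicate point I anticipate is constant-chasing: arranging the ``small'' Lipschitz bound and the ``universal'' bound on $\|H\|_{L^\infty}$ in precisely a $1{:}2$ ratio so that the chord inequality yields the clean prefactor $\tfrac{1}{2}$. Everything else is a chain of already-established $L^\infty$ estimates on $G_\rho$ and $\mathcal{T}\rho$.
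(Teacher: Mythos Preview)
Your proposal is correct and follows essentially the same route as the paper: the multiplicative decomposition $\mathcal{T}\rho_1-\mathcal{T}\rho_2=\mathcal{T}\rho_2(e^H-1)$, the $L^\infty$ bound on $F_{\rho_1}-F_{\rho_2}$ via~\eqref{G1}, the chord inequality for $e^t-1$ on the interval $[0,16R/\sigma^2]$, and the interpolation $\|\mathcal{T}\rho_2\|_{L^p}\leq\|\mathcal{T}\rho_2\|_{L^\infty}^{1-1/p}$ all match the paper's argument. The only cosmetic difference is that you package the normalizing-constant ratio into $H$ and bound $\ln(K_1/K_2)$ via the weighted-average identity $K_1/K_2=\int_X e^{F_{\rho_1}-F_{\rho_2}}\mathcal{T}\rho_2$, whereas the paper derives two-sided exponential bounds on $K_1/K_2$ directly; the resulting constants are identical.
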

\begin{proof}
We use a similar argument to the one provided by~\cite{Nordio18}. 
Let $\rho_1, \rho_2 \in \mathcal{P}_{e}(\tilde{X})$. Using the estimate~\eqref{T1} in Lemma~\ref{Lem: T}, we have for $1 \leq p < \infty$ 
\begin{align} \label{Lip1}
\Vert \mathcal{T} \rho_2 - \mathcal{T} \rho_1 \Vert_{L^p} 
&= \bigg\Vert \mathcal{T} \rho_1 \left( \frac{\mathcal{T} \rho_2}{\mathcal{T} \rho_1}  - 1 \right)  \bigg\Vert_{L^p} \leq \Vert \mathcal{T} \rho_1 \Vert_{L^p} \ \bigg\Vert \frac{\mathcal{T} \rho_2}{\mathcal{T} \rho_1}  - 1 \bigg\Vert_{L^{\infty}} \nonumber \\
&\leq \ \Vert \mathcal{T} \rho_1 \Vert_{L^{\infty}}^{1-\frac{1}{p}} \  \bigg\Vert \frac{K_1}{K_2} \ \exp \left\{-\frac{2}{\sigma^2}\int_{0}^{x} w \star (\rho_2 - \rho_1) \ \diff z\right\}  - 1 \bigg\Vert_{L^{\infty}},
\end{align}
where for the last inequality we used $\Vert \mathcal{T} \rho \Vert_{L^1(X)} = 1$. Now, define 
$$\Gamma(\rho_1 - \rho_2) := \frac{2}{\sigma^2} \int_{0}^{x} w \star (\rho_1 - \rho_2) \ \diff z,$$
and observe
\begin{align} \label{Lip2}
\left| \Gamma(\rho_2 - \rho_1) \right| &= \frac{2}{\sigma^2} \left| \int_{0}^{x} \int (z-y) \ \textbf{1}_{|z-y|\leq R} \ (\rho_2(y)-\rho_1(y)) \ \diff y \diff z \right| \nonumber \\
&\leq \frac{2}{\sigma^2} \int_{0}^{x} \int |(z-y)| \ \textbf{1}_{|z-y|\leq R} \ |\rho_2(y)-\rho_1(y)| \ \diff y\diff z \nonumber \\
&\leq \frac{2R}{\sigma^2} \int_{0}^{x} \int_{\tilde{X}} |\rho_2(y)-\rho_1(y)| \ \diff y\diff z 
\leq \frac{4R}{\sigma^2} \ \Vert \rho_2 - \rho_1 \Vert_{L^1}.
\end{align}
Similarly, we can write the normalizing constant $K_1$ as
\begin{align*}
K_1 &= \int_{0}^{1} \exp \left(-\frac{2}{\sigma^2}\int_{0}^{x} G_{\rho_1} \ \diff z\right) \diff x \nonumber = \int_{0}^{1} \exp \left(-\frac{2}{\sigma^2}\int_{0}^{x}G_{\rho_2} \ \diff z\right) \exp\left\{-\Gamma(\rho_1 - \rho_2) \right\} \ \diff x \nonumber .
\end{align*}
From~\eqref{Lip2}, it follows
\begin{align*}
K_1 &\leq \int_{0}^{1} \exp\left(-\frac{2}{\sigma^2}\int_{0}^{x} G_{\rho_2} \ \diff z\right) \ \exp \left(\frac{4R}{\sigma^2} \ \Vert \rho_2 - \rho_1 \Vert_{L^1} \right) \ \diff x = K_2 \exp \left(\frac{4R}{\sigma^2} \ \Vert \rho_2 - \rho_1 \Vert_{L^1} \right),
\end{align*}
and
\begin{align*}
K_1 &\geq \int_{0}^{1} \exp\left(-\frac{2}{\sigma^2}\int_{0}^{x} G_{\rho_2} \ \diff z\right) \ \exp \left(-\frac{4R}{\sigma^2} \ \Vert \rho_2 - \rho_1 \Vert_{L^1} \right) \ \diff x = K_2 \exp \left(-\frac{4R}{\sigma^2} \ \Vert \rho_2 - \rho_1 \Vert_{L^1} \right) .
\end{align*}
Hence,
\begin{equation} \label{Lip3}
\exp \left(-\frac{4R}{\sigma^2} \ \Vert \rho_2 - \rho_1 \Vert_{L^1} \right) \leq \frac{K_1}{K_2} \leq \exp \left(\frac{4R}{\sigma^2} \ \Vert \rho_2 - \rho_1 \Vert_{L^1} \right).
\end{equation}
Using~\eqref{Lip2} and~\eqref{Lip3}, we can rewrite~\eqref{Lip1} as
\begin{align*}
\Vert \mathcal{T} \rho_2 - \mathcal{T} &\rho_1 \Vert_{L^p} \leq \Vert \mathcal{T} \rho_1 \Vert_{L^{\infty}}^{1-\frac{1}{p}} \ \max \left\{ \exp \left(\frac{8R}{\sigma^2} \Vert \rho_2 - \rho_1 \Vert_{L^1} \right)  - 1, \ 1 - \exp \left(-\frac{8R}{\sigma^2} \Vert \rho_2 - \rho_1 \Vert_{L^1} \right)  \right\}.
\end{align*}
Hence,
\begin{align} \label{Lip4}
\Vert \mathcal{T} \rho_2 - \mathcal{T} \rho_1 \Vert_{L^p} \leq \Vert \mathcal{T} \rho_1 \Vert_{L^{\infty}}^{1-\frac{1}{p}} \ \left( \exp \left(\frac{8R}{\sigma^2} \Vert \rho_2 - \rho_1 \Vert_{L^1} \right)  - 1 \right).
\end{align}
Now, notice that
$$\Vert \rho_2 - \rho_1 \Vert_{L^1} \leq \Vert \rho_2 \Vert_{L^1} + \Vert \rho_1 \Vert_{L^1} = 2,$$ 
(recall that norms are defined over $X$) and for $a > 0$
$$\e^{ax} - 1 \leq \frac{1}{2} (\e^{2a} - 1)x, \quad \forall x \in [0,2].$$
Thus, we have
\begin{equation} \label{Lip5}
\exp \left( \frac{8R}{\sigma^2} \Vert \rho_2 - \rho_1 \Vert_{L^1} \right)  - 1  \leq \frac{1}{2} \left( \e^{\frac{16R}{\sigma^2}} - 1 \right) \Vert \rho_2 - \rho_1 \Vert_{L^1} .
\end{equation}
Combining~\eqref{Lip4} and~\eqref{Lip5}, we obtain
\begin{align*}
\Vert \mathcal{T} \rho_2 - \mathcal{T} \rho_1 \Vert_{L^p} &\leq \frac{1}{2} \ \Vert \mathcal{T} \rho_1 \Vert_{L^{\infty}}^{1-\frac{1}{p}} \ \left( \e^{\frac{16R}{\sigma^2}} - 1 \right) \Vert \rho_2 - \rho_1 \Vert_{L^1}.
\end{align*}
Finally, using~\eqref{T1} in Lemma~\ref{Lem: T} and the inequality~\eqref{finmeasure} which relates norms over domains of finite measure, we have  
$$\Vert \mathcal{T} \rho_2 - \mathcal{T} \rho_1 \Vert_{L^p} \leq L_{\mathcal{T}} \ \Vert \rho_2 - \rho_1 \Vert_{L^p},$$
where the constant $L_{\mathcal{T}}$ is given by~\eqref{LipConst}.
\end{proof} 

With this preliminary results in hand, we next move on to the proof of existence result in Theorem~\ref{StatinarySolution}.

\begin{proof}[Proof of Theorem~\ref{StatinarySolution} (Existence)]

Following essentially the same argument as in~\cite[Theorem 2.3]{Carrillo18} and using Lemma~\ref{Lem: fixed point}, we can present the existence result for the stationary solution as the fixed point of the operator~$\mathcal{T}$. 
First note that using the estimate~\eqref{T1} in Lemma~\ref{Lem: T}, we have 
$\Vert \mathcal{T} \rho \Vert_{L^2} \leq C \ \Vert \mathcal{T} \rho \Vert_{L^{\infty}} \leq c$
for some positive constant $c$.
Thus, for the purpose of finding the fixed points of $\mathcal{T}$, we can restrict $\mathcal{T}$ to act on the closed and convex set 
$E := \{ \rho \in L^2_{ep}(\tilde{X}) \cap \mathcal{P}_{e}(\tilde{X}) : \Vert \rho \Vert_{L^2} \leq c \}$. 
Now, notice that, using inequalities~\eqref{T1} and~\eqref{T2} in Lemma~\ref{Lem: T}, we have for any $\rho \in E$
\begin{equation} \label{Scha}
\Vert \mathcal{T} \rho \Vert_{H^1}^2 \leq \Vert \mathcal{T} \rho \Vert_{L^{2}}^2 + \Vert \partial_x \mathcal{T} \rho \Vert_{L^{2}}^2 \leq C_1 \Vert \mathcal{T} \rho \Vert_{L^{\infty}}^2 + C_2 \Vert \partial_x \mathcal{T} \rho \Vert_{L^{\infty}}^2 \leq c',
\end{equation}
for some constant $c'> 0$. 
That is, $\mathcal{T} (E) \subset E$ is uniformly bounded in $H^1_{ep}(\tilde{X})$. 
Thus, by the Rellich-Kondrachov compactness theorem~\cite[Section 5.7, Theorem 1]{Evans}, $\mathcal{T} (E)$ is precompact in $L^2_{ep}(\tilde{X})$.
Since $E \subset L^2_{ep}(\tilde{X})$ is closed, this implies $\mathcal{T} (E)$ is also precompact in $E$. 
Also, $\mathcal{T}$ is Lipschitz continuous by Proposition~\ref{Prop: LipT}. 
Hence, by Schauder fixed point theorem~\cite[Section 9.2.2, Theorem 3]{Evans}, it has a fixed point $\rho^s \in E$ which by~\eqref{Scha} belongs to $H^1_{ep}(\tilde{X})$.

\textit{Regularity.} 
The estimate~\eqref{T4} in Lemma~\eqref{Lem: T} implies that if $\rho_r  \in H^{k-2}_{ep} (\tilde{X})$, then the fixed point $\rho^s = \mathcal{T} \rho^s \in H^{k}_{ep} (\tilde{X})$. In particular, if $\rho_r  \in H^{1}_{ep} (\tilde{X})$, then $\rho^s \in H^{3}_{ep} (\tilde{X})$. Hence, by Sobolev embedding theorem~\cite[Section 4.12]{SobolevAdams}, $\rho \in C^{2}_{ep}(\ol{\tilde{X}})$ (after possibly being redefined on a set of measure zero).

\textit{Positivity.} The positivity of the fixed point directly follows from the representation~\eqref{operator}.
\end{proof}

\begin{Rem} [Uniqueness]
By Proposition~\ref{Prop: LipT}, $\mathcal{T}$ is Lipschitz continuous in $L^p$ with Lipschitz constant $L_{\mathcal{T}}$ given by~\eqref{LipConst}, and thus, is a contraction for $L_{\mathcal{T}} < 1$. Hence, by Banach fixed-point theorem~\cite[Section 9.2.1, Theorem 1]{Evans}, $\mathcal{T}$ has a unique fixed point for $L_{\mathcal{T}} < 1$. Setting $p=1$ in~\eqref{LipConst} gives the sufficient condition $\sigma^2 > \frac{16R}{\ln 3}$ for \emph{uniqueness} of stationary solution. This result corresponds to the sufficient condition provided in~\cite[Theorem 2]{Nordio18}.
\end{Rem}

\begin{Rem} [Semi-Gaussian clusters] \label{Rem: approx}
For a highly concentrated radical opinion distribution with average opinion $A = \int_X x \ \rho_r (x) \ \diff x$, we can provide an approximate solution to the stationary equation~\eqref{pdes} as follows
\begin{equation} \label{StaSol4}
\rho^s(x) = \frac{1}{K} \exp \left\{ -\frac{M+1}{\sigma^2} \min \left\{(x - A)^2, \ R^2 \right\} \right\},
\end{equation}
where $K$ is the normalizing constant (see Appendix~\ref{Append:approx} for the details). 
This result is an extension of the approximate solution provided by~\cite[Section 5.2]{Wang17}. In particular, one can reproduce the same result by setting $M=0$ and $A=0$. 
Equation~\eqref{StaSol4} shows that for highly concentrated radicals the possible accumulation of normals around the average radical opinion $A$ in the stationary state is semi-Gaussian with variance $\frac{\sigma^2}{2(M+1)}$.
Note that, as argued in~\cite{Wang17}, other clusters centered at opinion values other than $x=A$ may also exist. 
As long as these clusters are well-separated so that inter-cluster influences can be ignored, one can use the same approximation to derive a semi-Gaussian profile for the shape of these clusters (set $M=0$ and $A = x_0$ in~\eqref{StaSol4} where $x_0$ denotes the center of the corresponding cluster). 
This analysis shows that $M$ affects the shape of the possible cluster formed at the average radical opinion $A$ in stationary state. 
\end{Rem}

\subsection{Global Estimate for Stationary Solution} 
\label{sec:stationary characterization}

This section is devoted to the proof of the \emph{estimate} given in Theorem~\ref{StatinarySolution}. 
In this section, all the norms are w.r.t. the domain $\tilde{X}=[-1,1]$, unless indicated otherwise. 
 
\begin{proof}[Proof of Theorem~\ref{StatinarySolution} (Estimate)] 
Let $\psi = \rho^s - 1$ so that $\int_X \psi (x) \ \diff x = 0$. From the stationary equation~\eqref{pdes} we obtain
\begin{align*}
- \frac{\sigma^2}{2} \psi_{xx} &=  \left[ (\psi+1) \ G_{\psi+1}\right]_x  =  \left[ (\psi+1) \ ( w \star 1 + G_{\psi} ) \right]_x = \left[ (\psi+1) \ G_{\psi} \right]_x =  \left[ \psi \ G_{\psi} \right]_x + [G_{\psi}]_x,
\end{align*}
where we used the fact that $w \star 1 = 0$. Next, we multiply this last equation by $\psi$ and integrate by part over $\tilde{X}$ to derive
\begin{align*} 
\frac{\sigma^2}{2} \Vert \psi_x \Vert_{L^2}^2 = - \int_{\tilde{X}} \psi_x \ \psi \ G_{\psi} \ \diff x  - \int_{\tilde{X}} \psi_x \ G_{\psi} \ \diff x.
\end{align*}
The extra terms are zero due to periodicity. Thus,
\begin{align} \label{boundPsi}
\frac{\sigma^2}{2} \Vert \psi_x \Vert_{L^2}^2 &\leq \left| \int_{\tilde{X}} \psi_x \ \psi \ G_{\psi} \ \diff x  \right| +  \left| \int_{\tilde{X}} \psi_x \ G_{\psi} \ \diff x  \right| \leq \Vert G_{\psi} \Vert_{L^{\infty}} \ \Vert \psi_x \Vert_{L^2} \  \Vert \psi \Vert_{L^2} + \Vert \psi_x \Vert_{L^2} \  \Vert G_{\psi} \Vert_{L^2}.
\end{align}
Now, using inequality~\eqref{G1} in Lemma~\ref{Lem: G}, we obtain
\begin{align} \label{boundG1}
\Vert G_{\psi} \Vert_{L^{\infty}} &\leq 2R \left( \Vert \psi \Vert_{L^1(X)} + M \right)  = 2R \left( \Vert \rho - 1 \Vert_{L^1(X)} + M \right) \nonumber \\
&\leq 2R \left( \Vert \rho \Vert_{L^1(X)} + 1 + M \right) \leq 2R ( M+2 ).
\end{align}
Also, we have
\begin{align} \label{boundG21}
| G_{\psi} (x) |^2 &= \left( \int w (x-y) \ (\psi (y) + M \rho_r (y)) \ \diff y \right)^2 \nonumber \\
&= \left( \int_{x-R}^{x+R} (x-y) \ (\psi (y) + M \rho_r (y)) \ \diff y \right)^2 \nonumber \\
&\leq   \int_{x-R}^{x+R} (x-y)^2 \ \diff y  \ \int_{x-R}^{x+R} (\psi (y) + M \rho_r (y))^2 \ \diff y \nonumber \\
&\leq \frac{2}{3} R^3 \int_{x-R}^{x+R} (\psi (y) + M \rho_r (y))^2 \ \diff y.
\end{align}
Hence,
\begin{align} \label{boundG22}
\Vert G_{\psi} \Vert_{L^2}^2 &\leq  \frac{2}{3} R^3 \int_{\tilde{X}} \int_{x-R}^{x+R} (\psi (y) + M \rho_r (y))^2 \ \diff y \diff x \nonumber \\
&=  \frac{2}{3} R^3 \int_{\tilde{X}} \int_{-R}^{R} (\psi (x+y) + M \rho_r (x+y))^2 \ \diff y \diff x \nonumber \\
&=  \frac{2}{3} R^3  \int_{-R}^{R} \int_{\tilde{X}} (\psi (x+y) + M \rho_r (x+y))^2 \ \diff x \diff y \nonumber \\
&=  \frac{4}{3} R^4  \Vert \psi + M \rho_r \Vert_{L^2}^2.
\end{align}
Using estimates~\eqref{boundG1} and~\eqref{boundG22}, we can obtain form~\eqref{boundPsi} (recall that uniform distribution is not an equilibrium of the system and hence $\Vert \psi_x \Vert_{L^2} \neq 0$)
\begin{align} \label{boundPsi2}
\frac{\sigma^2}{2} \Vert \psi_x \Vert_{L^2} &\leq 2R ( M+2 ) \Vert \psi \Vert_{L^2} + \frac{2R^2}{\sqrt{3}} \ \Vert \psi + M \rho_r \Vert_{L^2} \nonumber \\
&\leq 2R ( M+2 ) \Vert \psi \Vert_{L^2} + \frac{2R^2}{\sqrt{3}} \left( \Vert \psi \Vert_{L^2} + M \Vert \rho_r \Vert_{L^2} \right) \nonumber \\
&= 2R \left( M+\frac{R}{\sqrt{3}}+2 \right) \Vert \psi \Vert_{L^2} + \frac{2R^2M}{\sqrt{3}}\Vert \rho_r \Vert_{L^2}. 
\end{align}
Now, since  $\int_X \psi (x) \ \diff x = 0$, we can employ Poincar\'e inequality~\cite[Section 5.8.1, Theorem 1]{Evans} to obtain $\Vert \psi \Vert_{L^2} \leq C \ \Vert \psi_x \Vert_{L^2}$. 
The optimal value for the Poincar\'e constant for $\tilde{X} = [-1,1]$ is $C = \frac{1}{\pi}$.
Combining this result with inequality~\eqref{boundPsi2}, we have
\begin{equation}
\left( \sigma^2 - \frac{4R}{\pi} \left( M+\frac{R}{\sqrt{3}}+2 \right) \right) \Vert \psi \Vert_{L^2} \leq \frac{4R^2M}{\pi \sqrt{3}} \Vert \rho_r \Vert_{L^2}.
\end{equation}
Defining $\sigma_b$ and $c_b$ as in~\eqref{sigma_b and c_b} gives the desired inequality
$\Vert \psi \Vert_{L^2} \leq \frac{1}{\eta} \Vert \rho_r \Vert_{L^2}$,
where $\eta = (\sigma^2 - \sigma_b^2)/c_b$.
\end{proof}

\subsection{Stability of Stationary State} 
\label{sec:longterm}

This section is devoted to the proof of Theorem~\ref{StabilityStatinarySolution} concerning stability of stationary state. 
All the norms in this subsection are w.r.t. the domain $\tilde{X}=[-1,1]$ (as opposed to $X = [0,1]$), unless indicated otherwise.

\begin{proof}[Proof of Theorem~\ref{StabilityStatinarySolution}]
We follow similar arguments as the ones in~\cite{Chazelle15}, except we consider a general stationary state $\rho^s$ (instead of the uniform distribution considered in~\cite{Chazelle15}). 
Let $\psi = \rho - \rho^s$ so that $\int_X \psi (x) \ \diff x = 0$. From the dynamic equation~\eqref{pde2}, we obtain
\begin{align} \label{TempS1}
\psi_t &=  \left[ (\psi+\rho^s) \ G_{\psi+\rho^s}\right]_x + \frac{\sigma^2}{2} \ [\psi+\rho^s]_{xx} \nonumber \\
&=  \left[ (\psi+\rho^s) \ ( w \star \psi + G_{\rho^s} ) \right]_x + \frac{\sigma^2}{2} \ [\psi+\rho^s]_{xx} \nonumber \\
&=  \left[ \psi \ ( w \star \psi + G_{\rho^s} ) \right]_x + \left[ \rho^s \ ( w \star \psi ) \right]_x + \left[ \rho^s \ G_{\rho^s} \right]_x +  \frac{\sigma^2}{2} \ \psi_{xx} + \frac{\sigma^2}{2} \ \rho_{s_{xx}} \nonumber \\
&=  \left[ \psi \ ( w \star \psi + G_{\rho^s} ) \right]_x + \left[ \rho^s \ ( w \star \psi ) \right]_x + \frac{\sigma^2}{2} \ \psi_{xx},
\end{align}
where for the last equality we used the fact that $\rho^s$ is a solution to the stationary equation~\eqref{pdes}, that is, 
$$\left[ \rho^s \ G_{\rho^s} \right]_x +  \frac{\sigma^2}{2} \ \rho^s_{xx} = 0.$$
Multiplying~\eqref{TempS1} by $\psi$ and integrating by part over $\tilde{X}$ we obtain (the extra terms are zero due to periodicity)
\begin{align} \label{TempS2}
\frac{1}{2}\frac{\diff}{\diff t} \Vert \psi \Vert^2_{L^2} + \frac{\sigma^2}{2} \Vert \psi_x \Vert_{L^2}^2 &\leq \left| \int_{\tilde{X}} \psi_x \ \psi \ ( w \star \psi + G_{\rho^s} ) \ \diff x  \right| +  \left| \int_{\tilde{X}} \psi_x \ \rho^s \ ( w \star \psi ) \ \diff x  \right| \nonumber \\
&\leq \left( \Vert w \star \psi \Vert_{L^{\infty}} + \Vert G_{\rho^s} \Vert_{L^{\infty}} \right) \ \Vert \psi_x \Vert_{L^2} \  \Vert \psi \Vert_{L^2}  +  \Vert \rho^s \Vert_{L^{\infty}} \ \Vert \psi_x \Vert_{L^2} \ \Vert w \star \psi \Vert_{L^2},
\end{align}
Now, from inequality~\eqref{G1} in Lemma~\ref{Lem: G}, we have
\begin{align*} 
\Vert w \star \psi \Vert_{L^{\infty}} &\leq 2R \ \Vert \psi \Vert_{L^1(X)} = 2R \ \Vert \rho - \rho^s \Vert_{L^1(X)} \leq 2R \ \left( \Vert \rho \Vert_{L^1(X)} + \Vert \rho^s \Vert_{L^1(X)} \right) = 4R, 
\end{align*}
and
$$ \Vert G_{\rho^s} \Vert_{L^{\infty}} \leq  2R \left( \Vert \rho^s \Vert_{L^1(X)} + M \right) = 2R(1+M).$$
Also, following a similar procedure as in~\eqref{boundG21} and~\eqref{boundG22} with $M=0$, we obtain $ \Vert w \star \psi \Vert_{L^2} \leq 2R^2 \Vert \psi \Vert_{L^2}/\sqrt{3}$. 
Finally, from~\eqref{T1} in Lemma~\ref{Lem: T}, we have $ \Vert \rho^s \Vert_{L^{\infty}} \leq  \exp \left(8R(1+M)/\sigma^2\right)$. 
Using these estimates and the Young's inequality we can rewrite~\eqref{TempS2} as
\begin{align*}
\frac{1}{2}\frac{\diff}{\diff t} \Vert \psi \Vert^2_{L^2} + \frac{\sigma^2}{2} \Vert \psi_x \Vert_{L^2}^2 &\leq \left( 2R(3+M)+ \frac{2R^2}{\sqrt{3}} \ \exp \left( \frac{8R(1+M)}{\sigma^2} \right) \right) \ \Vert \psi_x \Vert_{L^2} \  \Vert \psi \Vert_{L^2} \nonumber \\
&\leq \frac{1}{\sigma^2} \left( 2R(3+M)+ \frac{2R^2}{\sqrt{3}} \ \exp \left( \frac{8R(1+M)}{\sigma^2} \right) \right)^2 \Vert \psi \Vert^2_{L^2} + \frac{\sigma^2}{4} \ \Vert \psi_x \Vert^2_{L^2}.
\end{align*}
Hence,
\begin{align*}
\frac{1}{2}\frac{\diff}{\diff t} \Vert \psi \Vert^2_{L^2} \leq \frac{1}{\sigma^2} \left( 2R(3+M)+ \frac{2R^2}{\sqrt{3}} \ \exp \left( \frac{8R(1+M)}{\sigma^2} \right) \right)^2 \Vert \psi \Vert^2_{L^2} -  \frac{\sigma^2}{4} \ \Vert \psi_x \Vert^2_{L^2}.
\end{align*}
Once again, since  $\int_X \psi (x) \ \diff x = 0$, we can employ the Poincar\'e inequality~\cite{Evans} (Section 5.8.1, Theorem 1) $\Vert \psi \Vert_{L^2} \leq C \ \Vert \psi_x \Vert_{L^2}$ with optimal Poincar\'e constant $C = \frac{1}{\pi}$ to obtain
\begin{align*}
\frac{\diff}{\diff t} \Vert \psi \Vert^2_{L^2} \leq \left\{ \frac{2}{\sigma^2} \left( 2R(3+M)+ \frac{2R^2}{\sqrt{3}} \ \exp \left( \frac{8R(1+M)}{\sigma^2} \right) \right)^2 - \frac{\pi^2 \sigma^2}{2} \right\} \Vert \psi \Vert^2_{L^2}.
\end{align*}
Then, by Gr\"{o}nwall's inequality, we have
\begin{align*}
\Vert \psi(t) \Vert^2_{L^2} \leq \Vert \psi(0) \Vert^2_{L^2} \exp \left[ \left\{ \frac{2}{\sigma^2}  \left( 2R(3+M)+ \frac{2R^2}{\sqrt{3}} \ \exp \left( \frac{8R(1+M)}{\sigma^2} \right) \right)^2 - \frac{\pi^2 \sigma^2}{2} \right\} t \right].
\end{align*}
Now, notice that $ \Vert \psi(0) \Vert_{L^2} \leq \Vert \rho_0 \Vert_{L^2} + \Vert \rho^s \Vert_{L^2}$ is finite. 
Thus, if the constant factor in the exponential is negative, then $\Vert \psi(t) \Vert^2_{L^2} \rightarrow 0$ exponentially fast as $t \rightarrow \infty$. Negativity of the this constant factor corresponds to the condition $\sigma > \sigma_s$, where $\sigma_s$ solves~\eqref{sigma_s}.
\end{proof}

\section{Characterization of Solution: Fourier Analysis} 
\label{sec: Fourier}

In this section, we exploit the periodic nature of the system and use Fourier analysis to study the behavior of the solution to the PDE~\eqref{pde2} with \emph{uniform initial condition} $\rho_0 = 1$. 
To this end, we derive a system of ordinary differential equations (ODEs) describing the evolution of Fourier coefficients of the normal opinion density~$\rho$. 
Then, these ODEs are used for identification of the so-called \emph{order-disorder transition}. 
In particular, a numerical scheme is presented for approximating the critical noise level at which this transition occurs. 
Moreover, we use these ODEs to provide another approximation scheme for characterizing the initial clustering behavior of the system including the number and the timing of possible clusters. 
These numerical schemes are in essence similar to the linear stability analysis previously employed by~\cite{Pineda09,Pineda11,Pineda13,Garnier17,Wang17} for analysis of noisy bounded confidence models without radicals. 

\subsection{Fourier ODEs for Macroscopic Model}

Notice that the set $\{ \cos\left(\pi nx\right) \}^{\infty}_{n=0}$ is an orthogonal basis for the space $L^2_{ep}(\tilde{X})$ containing even $2$-periodic functions on $\tilde{X}=[-1,1]$. 
Then, the even $2$-periodic extension of the probability densities in the model allows us to consider the Fourier expansions of $\rho$ and $\rho_r$ in the form of
\begin{eqnarray} \label{F}
\rho(x,t) = \sum_{n=0}^{\infty} \ p_n(t) \ \cos\left(\pi nx\right) \ \ \text{and} \ \ \rho_r(x) = \sum_{n=0}^{\infty} \ q_n \ \cos\left(\pi nx\right).
\end{eqnarray}

By inserting the expansions~\eqref{F} into~\eqref{pde2} and setting the inner product of the residual with elements of the basis to zero (in other words, taking inverse Fourier transform), we can obtain a system of quadratic ODEs describing the evolution of Fourier coefficients $p_n(t)$. Considering the first frequency components $n=1,\ldots, N_f$ , these ODEs are expressed as 
\begin{equation} \label{ODE}
\dot{p}_n = c_n + b_n^T p + p^T Q_n p,
\end{equation}
where  $p = (p_1, p_2, \ldots, p_{N_f})^T$. 
Note that for $n=0$, i.e., the constant term in the Fourier expansion, we obtain $\dot{p_0} =\ 0$. 
This is due to the periodic nature of the system that preserves the zeroth moment. 
The coefficients in~\eqref{ODE} are given by

\begin{eqnarray} \label{ODE coeff}
\begin{array}{rcl}
c_n & = &2MR \ f_n \ q_n, \\ \\

(b_n)_k & = &\left\{ \begin{array}{lc}
2R \ f_n+ \frac{MR}{2} \ f_{2n} \ q_{2n} -\frac{\pi^2 \sigma^2 n^2}{2}, & k=n \\
nMR \left\{ \frac{q_{n+k} \ f_{n+k}}{n+k} + \frac{q_{|n-k|} \ f_{n-k}}{n-k} \right\}, & k \neq n,
\end{array}
\right. \\ \\

(Q_n)_{k,l} & = &\left\{ \begin{array}{lc}
nR\frac{f_k}{k}, & l=n-k>1 \\
nR\left\{ \frac{f_k}{k} + \frac{f_{n-k}}{n-k} \right\}, & l=k-n>1\\
0, & \text{otherwise,}
\end{array}
\right.
\end{array}
\end{eqnarray}
where 
\begin{align} \label{fn}
f_n := -\cos \left( \pi n  R\right)+\sinc\left( \pi n  R \right),
\end{align}
with $\sinc x = \frac{\sin x}{x}$. 
Also, recall that $q_n, n \in \N$ are the Fourier coefficients of $\rho_r$.

Interestingly, one notices that the interaction between different frequency components in the quadratic terms is limited to those that are in a sense complements of each other. 
That is, each frequency $n$ of $\rho$ is affected by the frequency pairs $(n_1, n_2)$ such that either $n_1+n_2 = n$ or $|n_1-n_2| = n$. 
This, in turn, leads to a particular structure for the matrix $Q_n$ in the quadratic terms.
As expected, a similar behavior is seen in the linear terms: the effect of each frequency $k$ of $\rho$ on a given frequency $n$ of $\rho$ is modulated by the frequency components $n+k$ and $|n-k|$ of $\rho_r$.

\subsection{Order-disorder Transition} \label{sec: order-disorder approx}

A common behavior in noisy interactive particle systems is the order-disorder transition.
For large values of $\sigma$, the effect of diffusion process can overcome the attracting forces among agents preventing the system from forming any cluster. 
This behavior has been analyzed and observed in several noisy bounded confidence models for opinion dynamics.
Pineda et. al. used linear stability analysis in~\cite{Pineda09, Pineda11} to compute the critical noise level above which the clustering behavior diappears for a modified version of Defuant model~\cite{Deffuant00}. 
The same behavior was also reported in~\cite{Grauwin12} for Defuant model.
The same technique of linear stability analysis was used in~\cite{Wang17,Garnier17} to compute the critical noise level for noisy HK system similar to our model, except without radicals.

Here, we provide a method for approximating the critical noise level $\sigma_c$ at which the transition occurs. 
To this end, we linearize the systems at $t=0$ to obtain a system of linear ODEs expressed as
\begin{equation} \label{ODE linear}
\dot{p} = c + B p.
\end{equation}
The vector $c \in \R^{N_f}$ and the matrix $B \in \R^{N_f\times N_f}$ are defined accordingly using the objects $c_n$ and $b_n$ in~\eqref{ODE coeff}. 
We emphasize that the linearization~\eqref{ODE linear} is for a uniform initial condition, i.e., $p_n(0) = 0$ for $n = 1, \ldots, N_f$. 

Looking at coefficients $c_n$ and $b_n$ in~\eqref{ODE coeff}, we notice that the noise level $\sigma$ only appears in the diagonal entries of $B$ such that by increasing $\sigma$, these diagonal entries decrease. 
That is, for a large enough $\sigma$, all eigenvalues of $B$ are negative and the linearized system~\eqref{ODE linear} is stable. 
This will be our first criterion for determining the critical noise level $\sigma_c$: the noise level above which all eigenvalues of $B$ are negative.

In order to consider the effect of the constant linear growth rates $c$ in~\eqref{ODE linear}, we further require the stationary values $\bar{p}_n, n = 1, \ldots, N_f$ of the linearized system~\eqref{ODE linear} (i.e., the solution to the equation $c + B \bar{p} =0$) to be relatively small. 
In other words, taking the equilibrium of the linearized system $1+ \sum_{n=0}^{N_f} \bar{p}_n \cos\left(\pi nx\right)$ as an approximation of the stationary state $\rho^s$, we require $\rho^s$ to be close to uniform distribution $\rho = 1$ (representing disorder). 
Similar to the theoretical estimate of Theorem~\ref{StatinarySolution}, we quantify this criterion by using Parseval's identity and setting 
\begin{eqnarray}\label{Pareseval}
\Vert \rho^s - 1 \Vert^2_{L^2} \approx \Vert \bar{p} \Vert^2_2 < \gamma,
\end{eqnarray} 
where the constant $\gamma > 0$ determines the level of similarity between $\rho^s$ and uniform distribution. 
To sum up, we solve numerically for the minimum level of noise for which $B$ is Hurwitz and the inequality~\eqref{Pareseval} holds.

\subsection{Initial Clustering Behavior} \label{sec: initial cluster approx}

For noises smaller than the critical noise level $\sigma_c$, we expect to see a clustering bahvior. In order to characterize the initial clustering behavior, we make use of the \emph{exponential growth rate} $\gamma_n \Let (b_n)_n$ and \emph{linear growth rate} $c_n$ given in~\eqref{ODE coeff}. 
The proposed numerical method is as follows. 
We ignore the interactions between different frequencies in~\eqref{ODE}, that is, for each frequency $n =1,\ldots, N_f$, we consider the equation $\dot{p}_n = c_n + \gamma_n p_n$ with $p_n(0) = 0$ (corresponding to uniform initial distribution) for initial evolution of the Fourier coefficient $p_n$. 
Then, for a given set of model parameters $(\sigma,R, M)$ and radical opinions density $\rho_r$, we numerically compute the \emph{dominant wave-number} $n^*:= \argmax_{n\in \N} \gamma_n$ with $\gamma_{n^*} > 0$, that is, the unstable mode with the largest exponential growth rate. 
We speculate that the corresponding trigonometric term $p_{n^*} \cos (\pi n^*  x)$ is the dominant component of the initial clustering behavior.
The sign of $p_{n^*}$ depends on the linear growth rate $c_{n^*}$: $p_{n^*} > 0$ if $c_{n^*} > 0$, and $p_{n^*} < 0$ otherwise. 

Considering the even $2$-periodic extension of the model, the dominant wave-form must be interpreted on the interval $\tilde{X}=[-1,1]$. 
Then, the \emph{number of initial clusters} $n_{\text{clu}}$ in the interval $X=[0,1]$ resulting from the wave-form $1+p_{n^*} \cos (\pi n^*  x)$ is given by
\begin{eqnarray} \label{n_clu}
n_{\text{clu}} := \left\{ \begin{array}{ll}
\lfloor \frac{n^*}{2} \rfloor +1, & c_{n^*} > 0 \\
\lceil \frac{n^*}{2} \rceil, & c_{n^*} < 0.
\end{array}
\right.
\end{eqnarray}
We also expect that the timing of this initial clustering behavior to be inversely related to the corresponding exponential growth rate $\gamma_{n^*}$. 
Indeed, by solving for the time for which the solution to the equation $\dot{p}_n = c_n + \gamma_n p_n$ is equal to $\pm 1$, we can approximate the \emph{time to initial clustering} $t_{\text{clu}}$ as
\begin{equation} \label{t_clu}
t_{\text{clu}} := \frac{1}{\gamma_{n^*}} \ln \left( 1+\frac{\gamma_{n^*}}{|c_{n^*}|} \right).
\end{equation} 
A similar approximation has been used in~\cite{Garnier17} in order to derive the time to the initial clustering using fluctuation theory.

\section{Numerical Study} 
\label{sec:NumSim}

In this section, we provide a numerical study of the model at hand for a particular distribution of radical agents/opinions through simulations of the corresponding discrete- and continuum-agent models. 
Furthermore, we validate the result of Fourier analysis for identification of order-disorder transition (Section~\ref{sec: order-disorder approx}) and  characterization of initial clustering behavior (Section~\ref{sec: initial cluster approx}). 

The particular radical distribution considered in this section is a triangular distribution with average $A$ and width $2S$
\begin{eqnarray} \label{TriangStub}
\rho_r(x) = 
\left\{
\begin{array}{ll}
\frac{1}{S^2} (S - |x-A|),  &|x-A| \leq S \\
0, &\text{otherwise.}
\end{array}
\right.
\end{eqnarray}
Although this choice may seem specific, it is rich enough for our purposes. 
In particular, with this choice, the zeroth, first and second moments of the radical opinions density are simply captured by the parameters $M$, $A$ and $S$, respectively. Moreover, we assume that the radicals are concentrated around their average opinion, that is, we consider small values of $S$ (w.r.t. the confidence range $R$).  

For the discrete-agent model, the SDEs~\eqref{eq.classical-sde1} are solved numerically using the Euler-Maruyama method for $N=500$ normal agents with time step $\Delta t = 0.01$. 
In particular, for the radical agents, we produce a random sample of size $N_r = MN$ from the triangular distribution~\eqref{TriangStub}. 
The initial distribution of normal agents is taken to be uniform, that is, the initial opinions are randomly sampled from a uniform distribution on the interval $X=[0,1]$.
For complete correspondence between the discrete- and continuum- agent models, we also consider the effect of even $2$-periodic extension in the simulations of the discrete-agent model. See Appendix~\ref{Append: Euler-Maruyama} for the details of the numerical scheme. 
The details of numerical scheme for simulation of the continuum-agent model will be discussed in Section~\ref{sec: sim continuum}.

In the sequel, we make use of the \emph{order parameter}  
\begin{align*}
Q_d(t) &=\frac{1}{N^2} \sum_{i,j=1}^{N} \textbf{1}_{|x_i(t) - x_j(t)| \leq R},
\end{align*}
introduced by~\cite{Wang17} and its continuum counterpart 
\begin{align*}
Q_c(t)&=\int_{X^2} \rho(x,t) \ \rho(y,t) \ \textbf{1}_{|x-y|\leq R} \ \diff x\diff y,
\end{align*}
to quantify orderedness in the clustering behavior of the model. 
In words, the order parameter $Q$ is the (normalized) number/mass of agents that are in $R$-neighborhood of and hence interacting with each other. 
In particular, in the continuum case, $Q_c = 2R$ for a uniform distribution of opinions (absolute disorder), while $Q_c = 1$ for a single-cluster distribution with all agents residing in an interval of width $R$ or less (complete order).
In case of a clustered behavior, roughly speaking, the inverse of the order parameter is equal to the \emph{number of clusters}. 
We also use the evolution of order parameter to characterize the timing of the clustering behavior.

In all the simulation results reported in this section the width of radicals distribution and the confidence range are fixed at $S = 0.1$ and $R = 0.1$, respectively.

\subsection{Simulation of the Continuum-agent Model} \label{sec: sim continuum}

In order to solve the continumm-agent model described by PDF~\eqref{pde2} numerically, we use Fourier ODEs~\eqref{ODE} to compute the coefficients of Fourier expansion of normal opinion density $\rho$ using the first $N_f$ terms of the expansion.
However, regarding the radical opinion density, one notices that the considered triangular distribution does not satisfy the conditions of Theorem~\ref{wellposed} for well-posedness of PDE~\eqref{pde2}, that is, $\rho_r \notin H^2_{ep}(\tilde{X})$. 
This will not be an issue since we will be working with the projection of the proposed $\rho_r$ in the Hilbert space $L^2_{ep}(\tilde{X})$. 
That is, we use the Fourier coefficients of $\rho_r$ in~\eqref{ODE} which for the triangular distribution~\eqref{TriangStub} are given by
\begin{equation} \label{Fst}
q_n=2 \cos \left(n\pi A\right) \sinc^2 (n\pi S/2).
\end{equation}
To be precise, we need the Fourier coefficients $q_n$ of $\rho_r$ for $1 \leq n \leq 2N_f$, that is, twice the length of Fourier expansion of $\rho$; see the linear terms of~\eqref{ODE}. 
For the initial condition, we again consider uniform distribution $\rho_0=1$, which corresponds to $p_0 = 1$ and $p_n (0) = 0$ for the Fourier coefficients.
 
It is also possible to employ a semi-explicit pseudo-spectral method, similar to the one provided by~\cite{Wang17}, for numerically solving~\eqref{pde2}; see Appendix~\ref{Append: Psuedo-Spectral} for details of this method for our model. 
The main difference is that the pseudo-spectral method solves the PDE for a set of discrete points in the space ($x \in X$) while solving the Fourier ODEs gives an approximation of the solution in terms of a finite basis for the corresponding Hilbert space.

These two methods (if both converge) result in the same solution. 
Fig.~\ref{PSvsFC} compares the result of numerical simulations of the model using these two methods for a particular combination of system data. 
Note that, in these simulations, the number of points for the spacial discretization in pseudo-spectral method is twice the $N_f$ for Fourier ODEs so that the methods are compatible, i.e., both include the same set of frequency components.
The left panel of Fig.~\ref{PSvsFC} shows a similar result using these two methods for $N_f = 32$ frequencies. However, as the number of frequencies considered in the simulations are decreased, we see that that the pseudo-spectral method starts to diverge while Fourier ODEs are still stable. 

In the remainder of this section, we use the Fourier ODEs~\eqref{ODE} with $N_f = 128$ for numerical simulation of the continuum-agent model since they are computationally more efficient.
\begin{figure}
\centering
\includegraphics[clip, trim=2cm 0cm 2cm 0cm,width=1\linewidth]{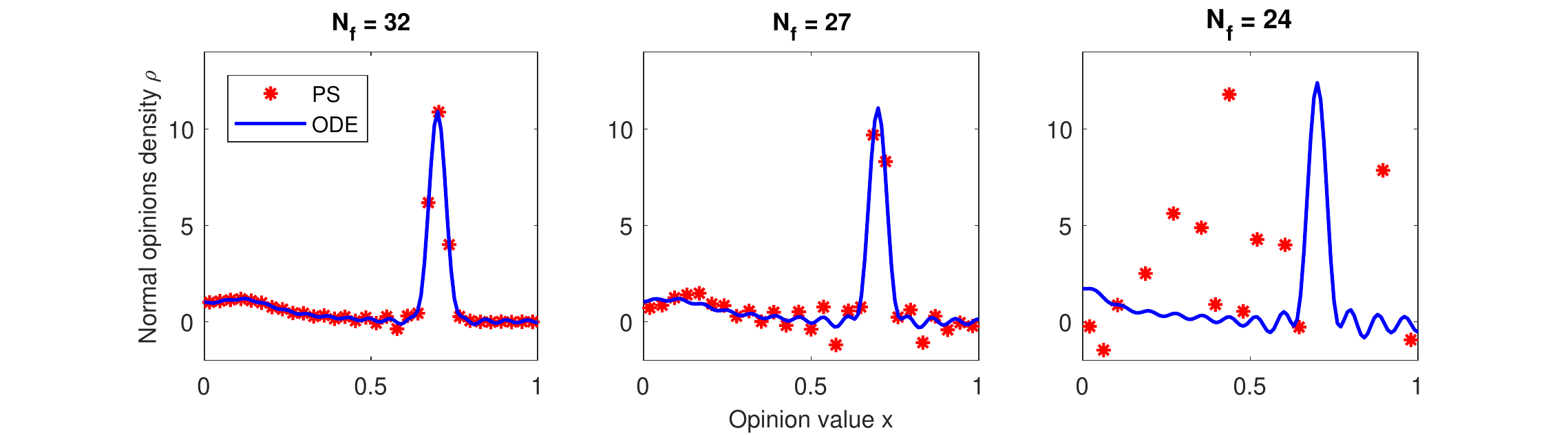} 
\caption{Comparison of the pseudo-spectral method (\emph{PS}) described in Appendix~\ref{Append: Psuedo-Spectral} with $\Delta t = 0.01$ and the Fourier ODEs (\emph{ODE}) given in~\eqref{ODE} for numerical simulation of the continuum-agent model~\eqref{pde2}. The results are for $t = 400$ with system data $(\sigma, M, A) = (0.03, 0.1, 0.7)$. In the right panel, some of the points in the solution of the pseudo-spectral method are outside the limits of the vertical axis.}
\label{PSvsFC}
\end{figure}

\subsection{Order-disorder Transition} \label{sec: sim transition}

In this section, we numerically study the order-disorder transition in the model. 
In particular, we consider the effect of the relative mass of radicals $M$ on the critical noise level $\sigma_c$ at which this transition occurs. 
Furthermore, we use our simulation results to examine the approximation scheme presented in Section~\ref{sec: order-disorder approx}.
In this regard, we note that the interplay between the confidence range $R$ and the critical noise level $\sigma_c$ have been studied in~\cite{Wang17}.
There, the authors showed that as $R$ increases, the critical noise level $\sigma_c$ also increases in such way that for small values of $R$, we observe a first-order transition.

\subsubsection{An Illustrative Example}
Our model exhibits the same order-disorder transition previously reported for similar noisy HK systems~\cite{Pineda13, Garnier17, Wang17}. 
Fig.~\ref{noise_level} shows this effect for a particular combination of system data in the discrete- and continuum-agent models.
Notice that for $\sigma$ larger than a critical level the clustering behavior almost disappears (see the lower panel corresponding to $\sigma = 0.05$ in Fig.~\ref{noise_distr}).
To be more precise, a higher level of noise decreases the life-time of clustering behaviors with larger number of clusters.

This effect can be particularly seen in the evolution of order parameter in Fig.~\ref{noise_ordpar}.
In this regard, notice that for noises smaller than the critical noise level (here $\sigma < 0.05$) the flat areas in the order parameter in Fig.~\ref{noise_ordpar} correspond to a clustered behavior, where the number of clusters is equal to the inverse of the order parameter. 
To illustrate, observe that for $\sigma = 0.03$ and $\sigma = 0.04$, the system reaches a single-cluster profile around the average radical opinion $A = 0.7$.
Notice, however, for $\sigma = 0.03$ the system first goes through a 2-cluster profile corresponding to the flat area in the blue solid line at height $0.5$ in Fig.~\ref{noise_ordpar}.
On the other hand, for $\sigma = 0.02$, we observe a 2-cluster profile at $t=10^4$ in Fig.~\ref{noise_distr}. Notice, however, how the system goes through 4-cluster and 3-cluster profiles as depicted in Fig.~\ref{noise_ordpar} (the flat areas in the order parameter). 
Finally, for $\sigma = 0.01$, we observe a very fast emergence of a 4-cluster profile (Fig.~\ref{noise_ordpar}) that has survived until $t=10^4$ as shown in Fig.~\ref{noise_distr}.
Here, we also notice that exact position of clusters in the discrete- and continuum-agent models differ. 
This particular difference between mean-field and agent-based models has been also mentioned by~\cite{Pineda09, Pineda11}. 
Indeed, our numerical simulations show that even the number of clusters resulting from mean-field and agent-based models may differ; this also has been reported and explained previously in~\cite{Wang17}. 
Finally, we note that for $M=0.1$, the approximation scheme explained in Section~\ref{sec: order-disorder approx} results in $\sigma_c = 0.043$ for $\gamma = 1$ and $\sigma_c = 0.051$ for $\gamma = 0.1$ (see~\eqref{Pareseval} for influence of $\gamma$). 

\begin{figure}
\begin{subfigure}{.5\textwidth}
  \centering
  \includegraphics[clip, trim=0cm 0.7cm 0cm 1cm,width=.9\linewidth]{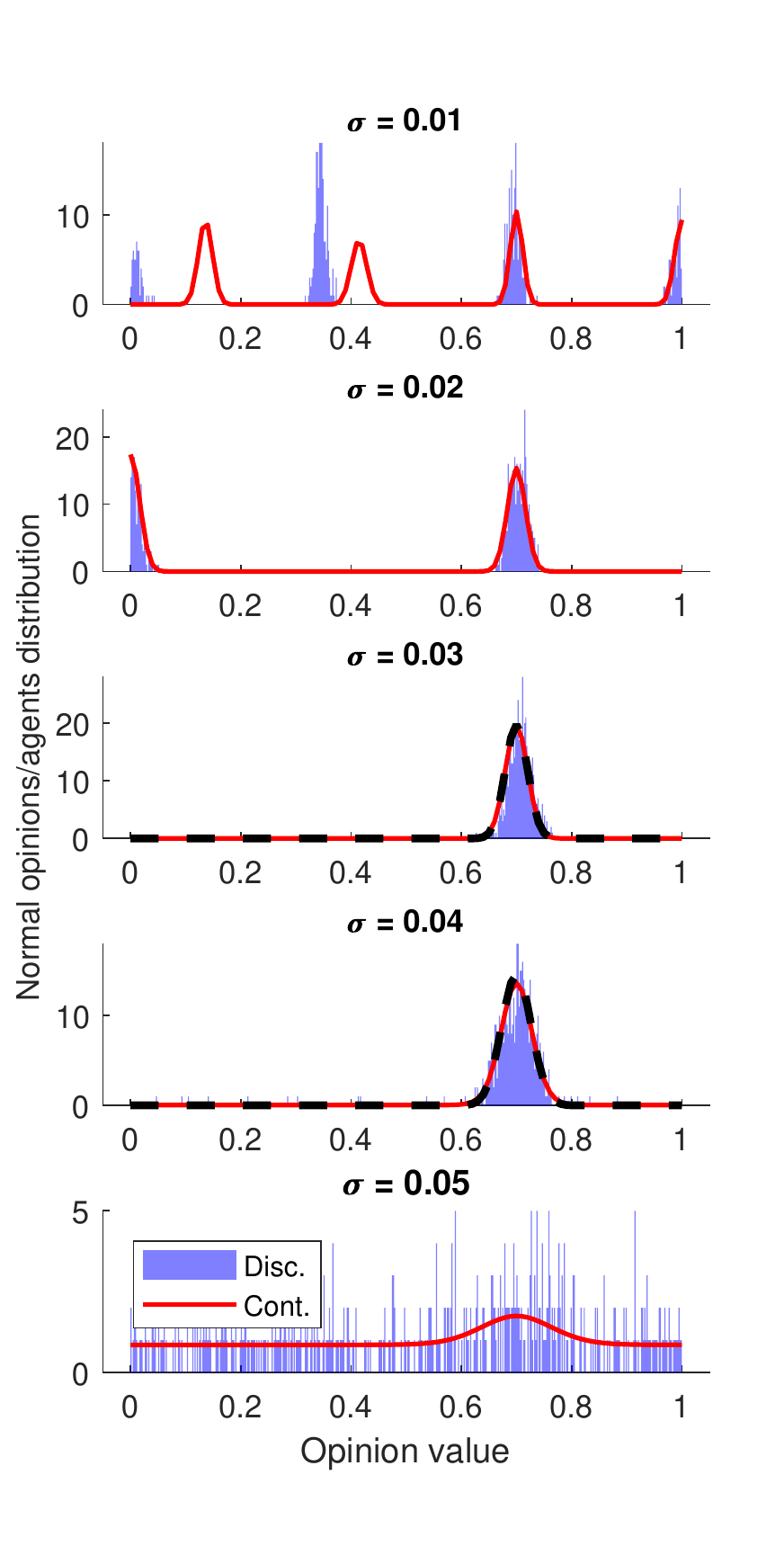}
  \caption{Distribution of opinions/agents at $t = 10^4$}
  \label{noise_distr}
\end{subfigure}%
\begin{subfigure}{.5\textwidth}
  \centering
  \includegraphics[clip, trim=0cm 0.7cm 0cm 1cm,width=.9\linewidth]{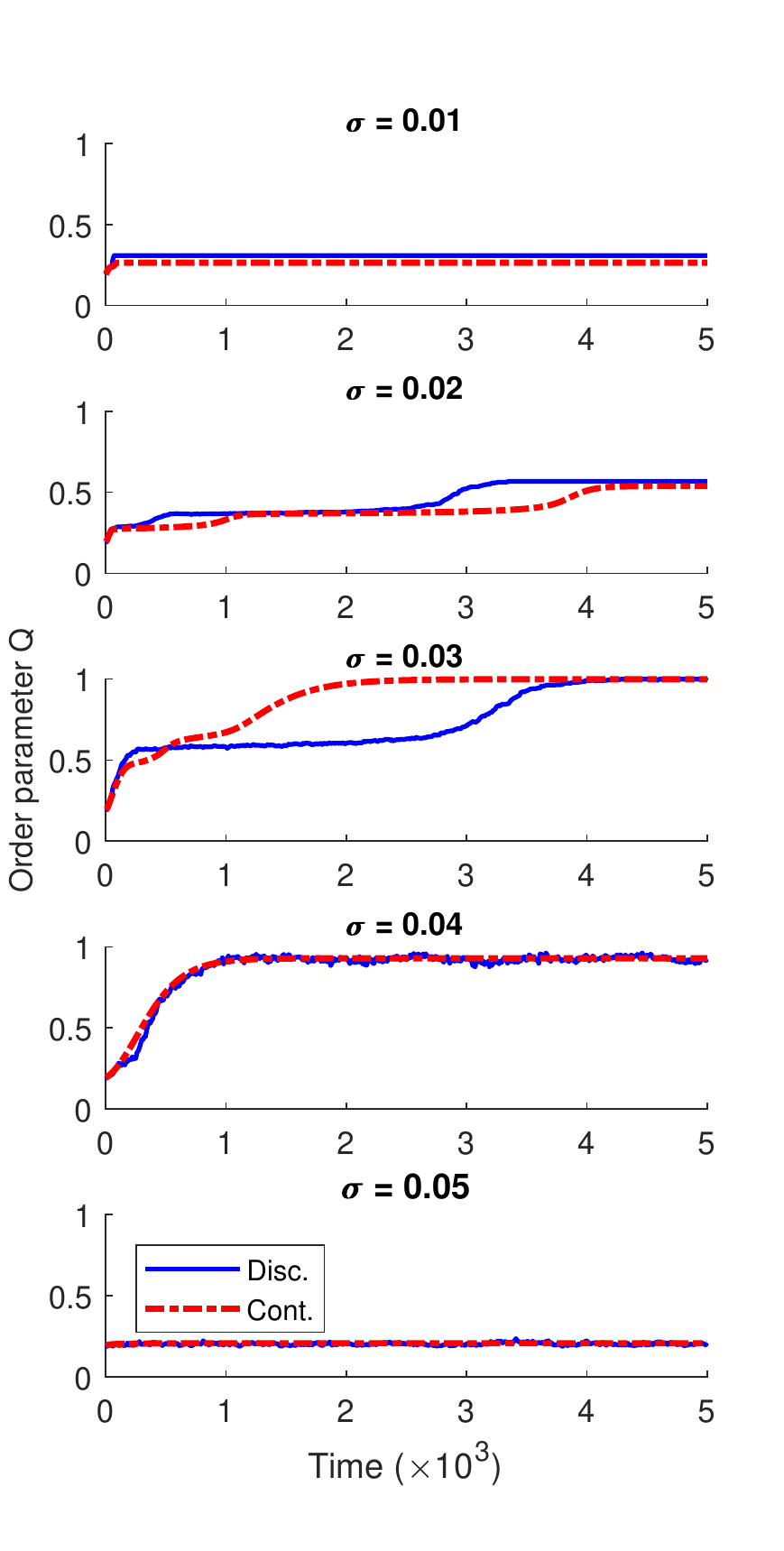}
  \caption{Evolution of order parameter}
  \label{noise_ordpar}
\end{subfigure}
\caption{Numerical simulation of the discrete-agent model (\emph{Disc.}) and continuum-agent model (\emph{Cont.}) for different values of noise $\sigma$ with system data $(M, A) = (0.1, 0.7)$. As noise increases the number of clusters decreases so that for a large enough noise the clustering behavior disappears (see Section~\ref{sec: sim transition}). The black dashed lines in left panels for $\sigma = 0.03, 0.04$ are the approximate stationary solutions~\eqref{StaSol4} (see Remark~\ref{Rem: approx}).}
\label{noise_level}
\end{figure}

\subsubsection{Effect of $M$ on $\sigma_c$}
Fig.~\ref{mass_noise} shows the order parameter derived numerically by simulating  the continuum- and discrete-agent models. 
Notice how for each $M$, as noise increases, the system experiences a transition form order (clustered phase with $Q \approx 1$ in the yellow strip) to disorder (with $Q \approx 0.2$ in the dark blue area in the upper part of the plots). 
Also, we note that the blue strip in the lower part of plots in Fig.~\ref{mass_noise} represents clustering behaviors with larger number of clusters (similar to the behavior seen for $\sigma = 0.01$ in Fig.~\ref{noise_level}).  

This result shows that as the relative mass of radicals $M$ increases, the corresponding critical noise level $\sigma_c$, above which the system is in disordered state, also increases. 
The dependence of $\sigma_c$ on $M$ is in the form of a concave function. 
Furthermore, for small values of $M$, the transition seems to be discrete, signaling a first-order transition. However, for large values of $M$ the transition becomes blurry. 
This phenomenon was also reported in~\cite{Wang17} for the dependence of the critical noise level on the confidence range $R$. 
Notice that as $M$ increases, the required noise level for disordered behavior also increases. 
This increase in the noise level leads to wider clusters, which, in turn, makes it difficult to differentiate order from disorder; see, e.g., the panels corresponding to $\sigma = 0.04, 0.05$ in Fig.~\ref{noise_level}.      

Also shown in Fig.~\ref{mass_noise} (red lines) is the result of scheme provided in Section~\ref{sec: order-disorder approx} for approximating the critical noise level. As can be seen, the scheme indeed provides a good approximation of the critical noise level. 
In particular, the dashed red line (for $\gamma = 1$) almost perfectly separates the two phases of order and disorder.  

\begin{figure}
\begin{subfigure}{.5\textwidth}
  \centering
  \includegraphics[clip, trim=.1cm .1cm .9cm .2cm,width=0.9\linewidth]{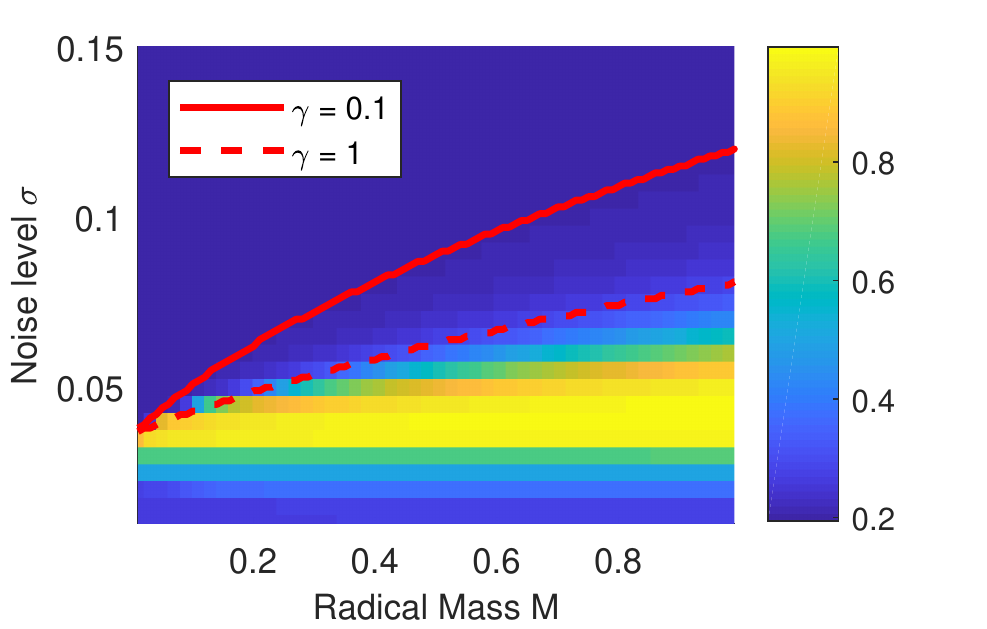}
  \caption{Continuum-agent model}
  \label{mass_noise_PDE}
\end{subfigure}%
\begin{subfigure}{.5\textwidth}
  \centering
  \includegraphics[clip, trim=.1cm .1cm .9cm .2cm,width=0.9\linewidth]{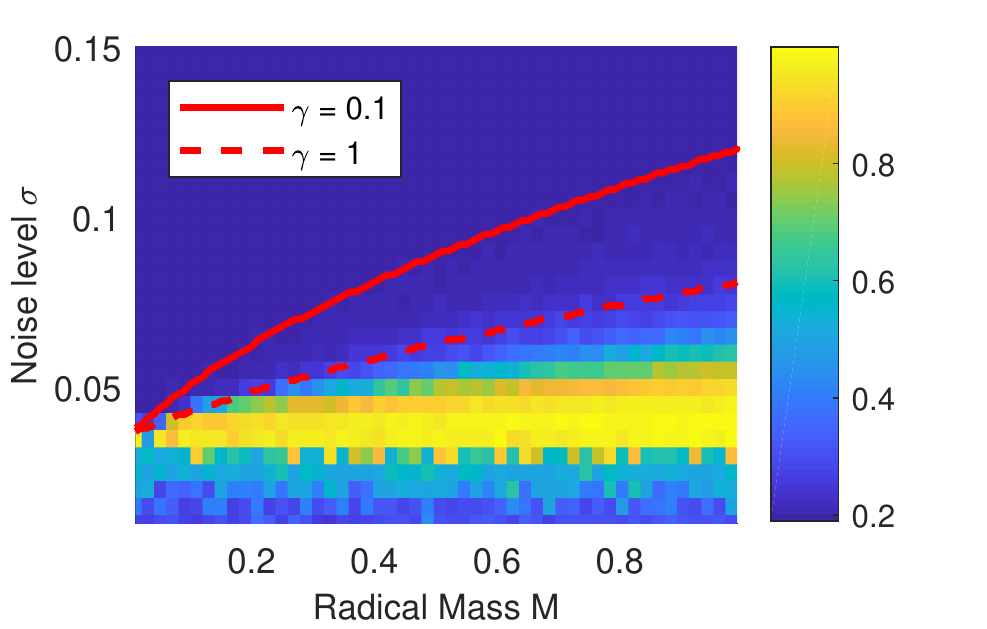}
  \caption{Discrete-agent model}
  \label{mass_noise_SDE}
\end{subfigure}
\caption{The order parameter at $t = 10^3$ from numerical simulation of the continuum- and discrete-agent models starting form uniform initial distribution. For the discrete-agent model, the average of order parameter over  the time  window $[900, 1000]$ is reported. The plot covers the region $\sigma \times M \in [0.01, 0.15] \times [0.01, 1]$ with step sizes $\Delta \sigma = 0.005$ and $\Delta M = 0.02$. The red lines show the result of the numerical scheme described in Section~\ref{sec: order-disorder approx} for approximating the critical noise level for different values of $\gamma$ w.r.t. the second criterion~\eqref{Pareseval}. See Section~\ref{sec: sim transition} for details.}
\label{mass_noise}
\end{figure} 

\subsection{Initial Clustering Behavior} \label{sec: sim initial}

For noises smaller than the critical noise level, agents start to form clusters; see Fig.~\ref{noise_level}. 
In particular, we observe a cluster of normal agents around the average radical opinion $A$ due to the force field generated by the radicals.
Generally, three types of clusters may form: (1) the cluster at the average radical opinion $A$, (2) the cluster(s) at the extreme opinions $x = 0$ and/or $x = 1$, and (3) the cluster(s) around opinion values other than $x = 0, 1, A$.  
The third type of clusters are expected to perform a random walk with their center of mass moving like a Brownian motion (assuming clusters do not interact). The effective diffusivity of these Brownian motions is inversely related to the size of the cluster, i.e., the number of agents in the cluster.
This will result in a process of consecutive merging between these clusters until complete disappearance of them. 
Detailed descriptions of this process are provided in~\cite{Garnier17,Wang17}.
Notice however that this description does not apply to cluster(s) formed at $x = A$ and $x = 0,1$. 
These clusters are affected by forces other than the normal attractions among the agents within the cluster.
The cluster formed at $x = A$ is under influence of radicals and the possible clusters at the extreme opinions $x = 0,1$ are reinforced due to the even $2$-periodic extension considered in our model.
The behavior of these clusters (survival or dissolution) depends on their size, the exogenous force acting on them, and the effect of other clusters in their neighborhood.

In this section, we use the analysis scheme provided in Section~\ref{sec: initial cluster approx} to investigate the effect of the zeroth and first moment of radicals ($M$ and $A$, respectively) on the initial clustering behavior of the model for noises smaller than the critical level. 
In particular, we investigate the effect of $M$ and $A$ on the number, position and timing of initial clusters for different values of $\sigma$.
We again emphasize that we are considering a concentrated triangular distribution for radical agents and a uniform initial distribution for normal agents. 
Let us begin with illustrating how the objects introduced in Section~\ref{sec: initial cluster approx}, namely, exponential and linear growth rates and the dominant wave-number, can be used to characterize the initial clustering behavior. 

\subsubsection{An Illustrative Example} 
Consider the system data $(\sigma, M, A) = (0.01, 0.1, 0.7)$. 
Fig.~\ref{initial_case} depicts the values of the exponential growth rate $\gamma_n$ and the linear growth rate $c_n$ for different frequencies. 
In Fig.~\ref{initial_case_Exp_terms} we observe that the unstable mode with the maximum exponential growth rate is $n^* = 8$ with $\gamma_{n^*} = 0.177$. 
Fig.~\ref{initial_case_Lin_terms} shows that the linear coefficient corresponding to this frequency is $c_{n^*} = 0.007 > 0$.  
Then,~\eqref{n_clu} implies that the initial clustering behavior is expected to have $n_{\text{clu}} = 5$  clusters. 
Also, using~\eqref{t_clu}, we obtain $t_{\text{clu}} = 18.16$ for the time to initial clustering. 

\begin{figure}
\begin{subfigure}{.5\textwidth}
  \centering
  \includegraphics[width=1\linewidth]{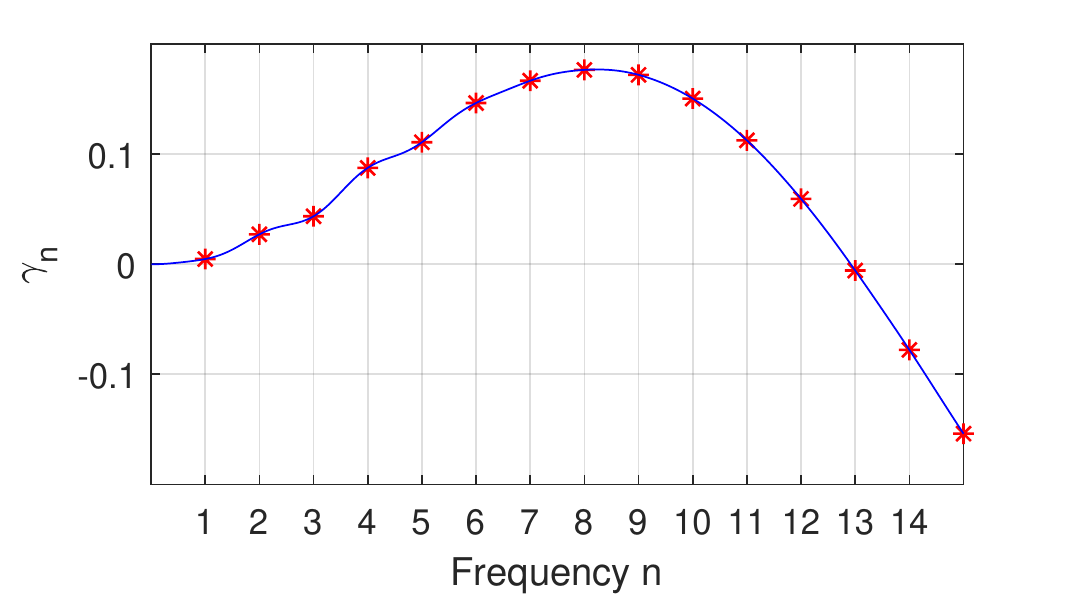}
  \caption{Exponential growth rate}
  \label{initial_case_Exp_terms}
\end{subfigure}%
\begin{subfigure}{.5\textwidth}
  \centering
  \includegraphics[width=1\linewidth]{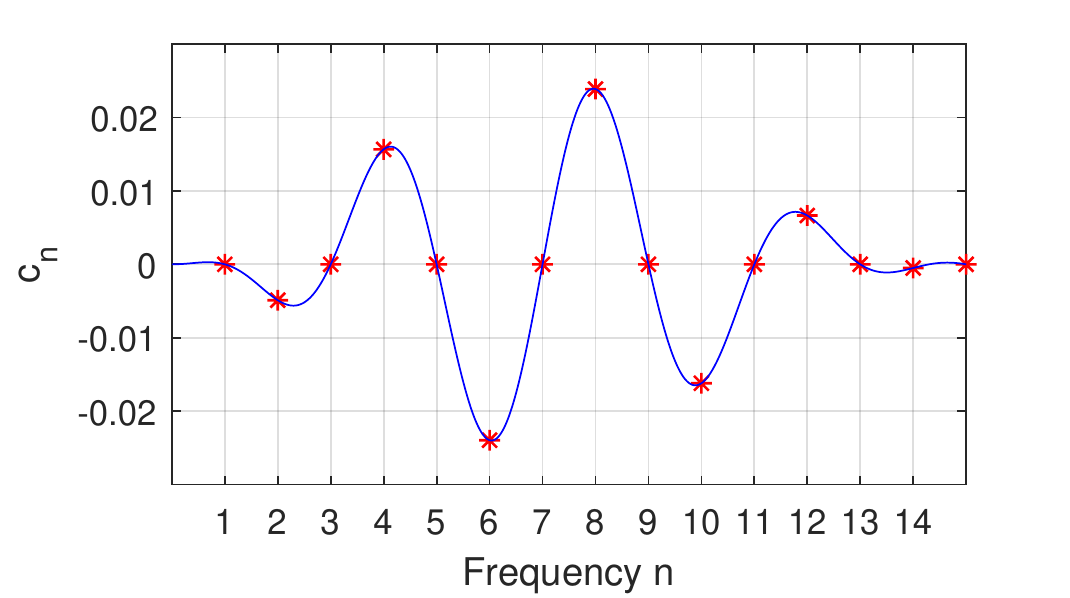}
  \caption{Linear growth rate}
  \label{initial_case_Lin_terms}
\end{subfigure}
\caption{Exponential and linear growth rates for system data $(\sigma, M, A) = (0.01, 0.1, 0.7)$ for different frequencies. On the left panel we see the maximum exponential growth corresponds to $n^* = 8$ with $\gamma_{n^*} = 0.177$. On the right panel we see $c_{n^*} = 0.007 > 0$. This implies that the waveform $p_8 \cos (8 \pi x)$ with $p_8 > 0$ is the dominant component of the initial clustering behavior.}
\label{initial_case}
\end{figure}

Fig.~\ref{initial_case_1} shows the time evolution of distribution of normal opinions/agents for the system data corresponding to Fig.~\ref{initial_case}. 
For the continuum-agent model, we can see a 5-cluster profile corresponding to the speculated waveform as depicted in Fig.~\ref{case_PDE_dist_1}. 
A similar clustering behavior is observed in the Monte Carlo simulation of the discrete-agent model in Fig.~\ref{case_SDE_dist_1}. 
Here, we observe three clear clusters: the cluster at average radical opinion $A = 0.7$ and the two clusters at extreme opinions $x = 0,1$. 
However, we observe an almost uniform distribution of normal agents in the opinion range $[0.1, 0.5]$. 
This is due to the fact that the exact position of the corresponding clusters formed in the discrete-agent model varies within this range. 
Individual realizations of the discrete model show one, two or three clusters in this range with two clusters being the most frequent behavior as expected.
This effect has been also reported by~\cite{Pineda09} in Monte Carlo simulations of a noisy Defuant model.
Furthermore, we notice that the timing object $t^* = 18.16$ also gives a good approximation for the onset of the corresponding clustering behavior for both continuum- and discrete-agent systems.  

\begin{figure}
\begin{subfigure}{.5\textwidth}
  \centering
  \includegraphics[clip, trim=0cm 0.6cm 0cm 1cm,width=0.9\linewidth]{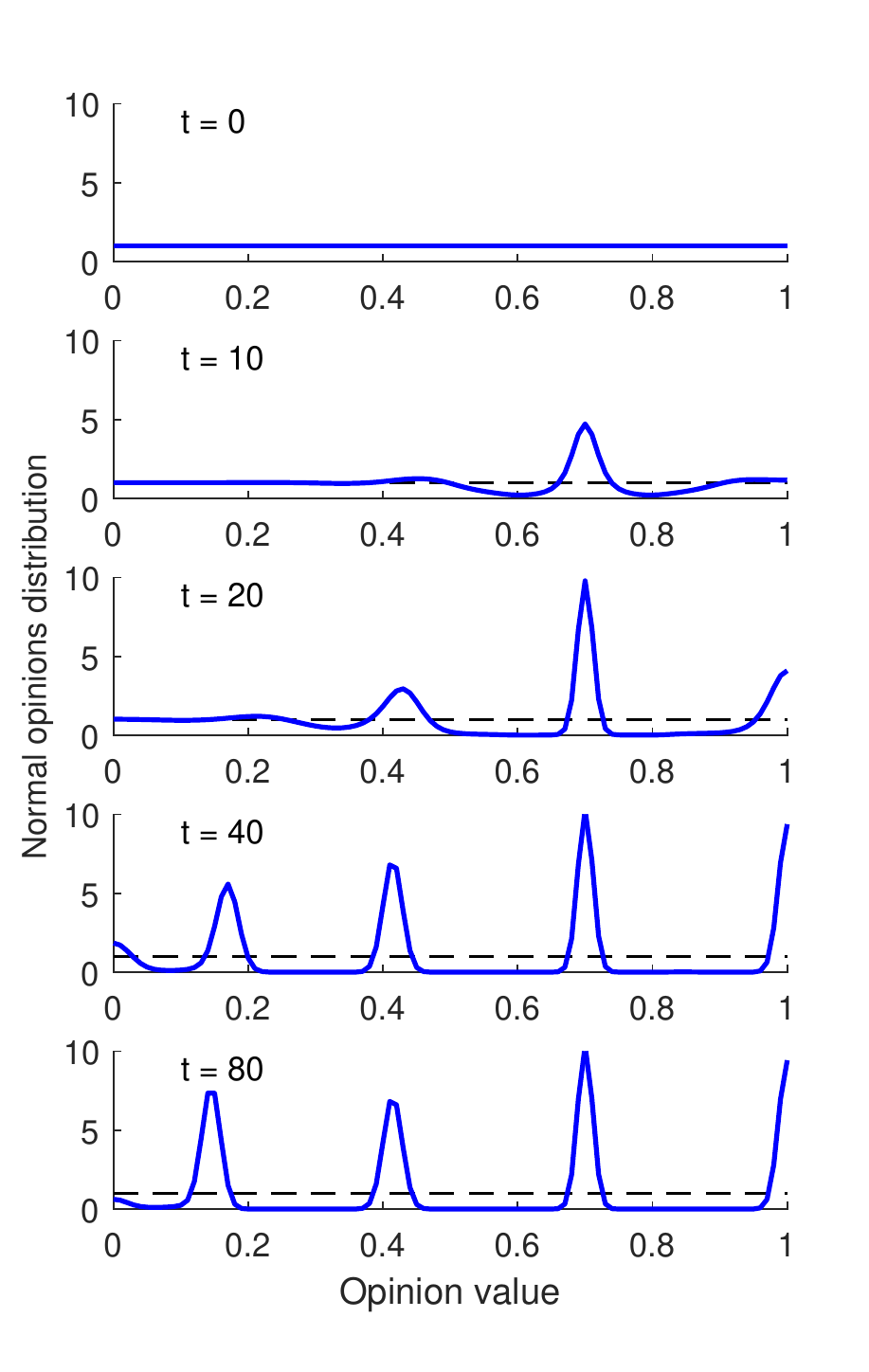}
  \caption{Continuum-agent model}
  \label{case_PDE_dist_1}
\end{subfigure}%
\begin{subfigure}{.5\textwidth}
  \centering
  \includegraphics[clip, trim=0cm 0.6cm 0cm 1cm,width=0.9\linewidth]{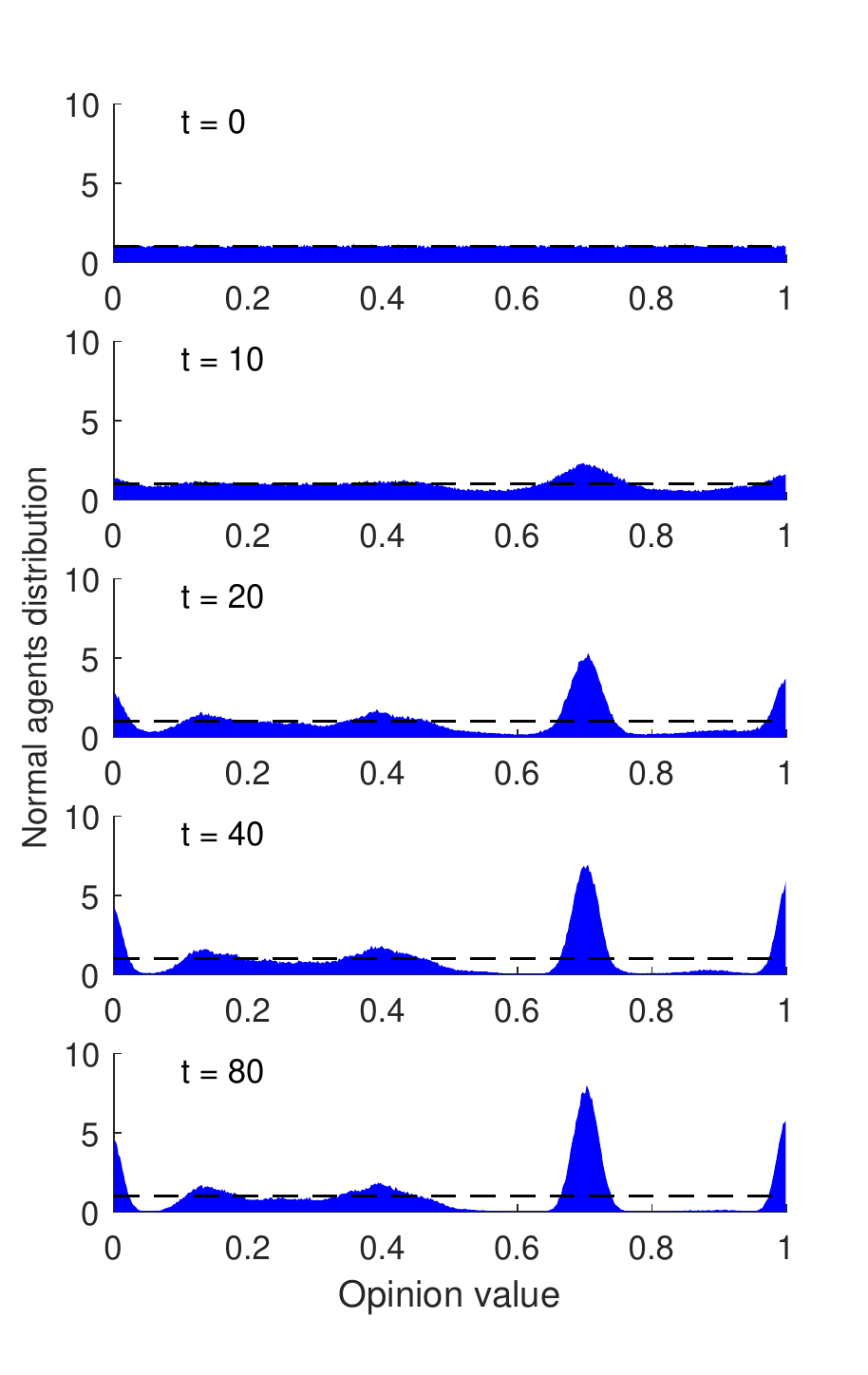}
  \caption{Discrete-agent model}
  \label{case_SDE_dist_1}
\end{subfigure}
\caption{Evolution of distribution of normal opinions/agents during the initial clustering behavior for system data $(\sigma, M, A) = (0.01, 0.1, 0.7)$ corresponding to Fig.~\ref{initial_case}. The distributions shown for the discrete-agent model are the average profiles of 300 realizations. The onset of a 5-cluster behavior is observed from approximately $t = 20$ corresponding to the waveform $1+\cos (8 \pi x)$ speculated for the initial clustering behavior with $t_{\text{clu}} = 18.16$.}
\label{initial_case_1}
\end{figure}

\subsubsection{Effect of $M$ and $A$ on Initial Clustering} 
Performing a similar analysis to the one provided in the example above, we can compute the dominant wave-number ($n^*$), number of initial clusters ($n_{\text{clu}}$) and time to initial clustering ($t_{\text{clu}}$) for a general combination of system data.
Fig.~\ref{initial_clustering} shows the result of this analysis for different values of $M$ and $A$ at three different noise levels $\sigma$.
Here, we only considered the values $A < 1-R = 0.9$ since for $1-R < A < 1$ the boundary effect due to even $2$-periodic extension comes into play.

\begin{figure}
\begin{subfigure}{1\textwidth}
  \centering
  \includegraphics[clip, trim=2cm 0cm 1cm 0cm,width=1\linewidth]{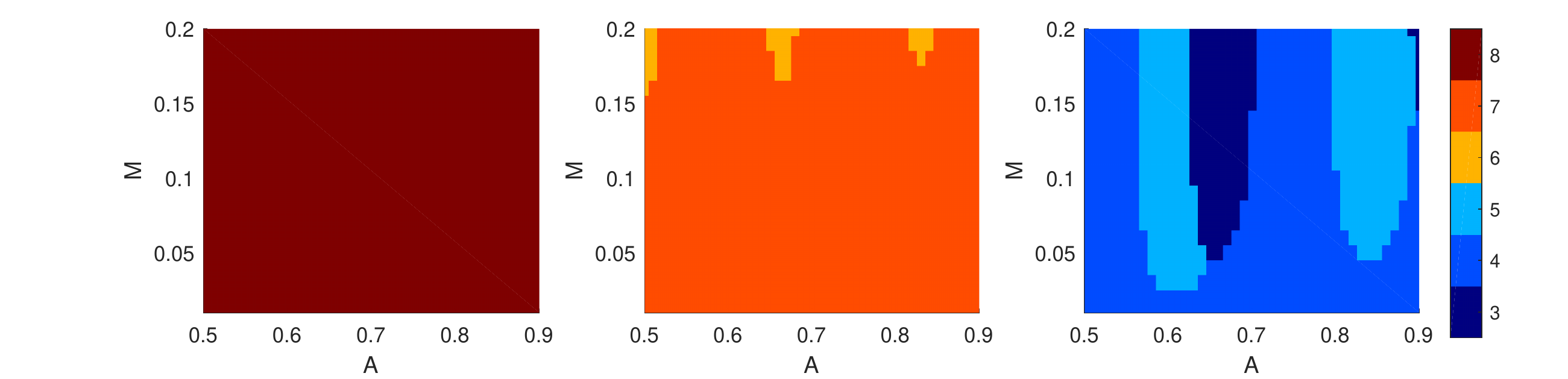}
  \caption{Dominant wave-number: $n^*$}
  \label{initial_n_star}
\end{subfigure}
\begin{subfigure}{1\textwidth}
  \centering
  \includegraphics[clip, trim=2cm 0cm 1cm 0cm,width=1\linewidth]{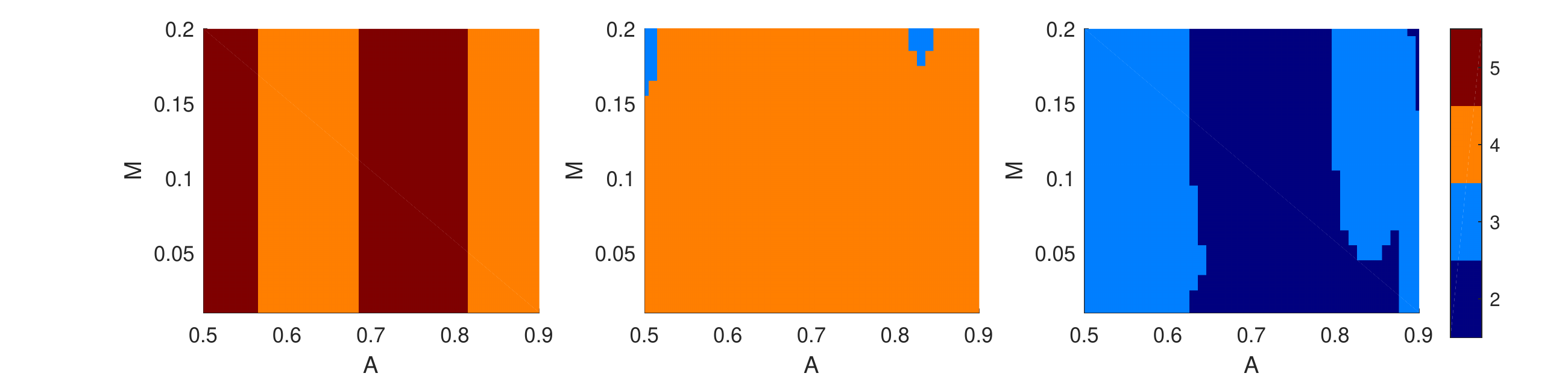}
  \caption{Number of initial clusters: $n_{\text{clu}}$}
  \label{initial_n_cluster}
\end{subfigure}
\begin{subfigure}{1\textwidth}
  \centering
  \includegraphics[clip, trim=2cm 0cm 1cm 0cm,width=1\linewidth]{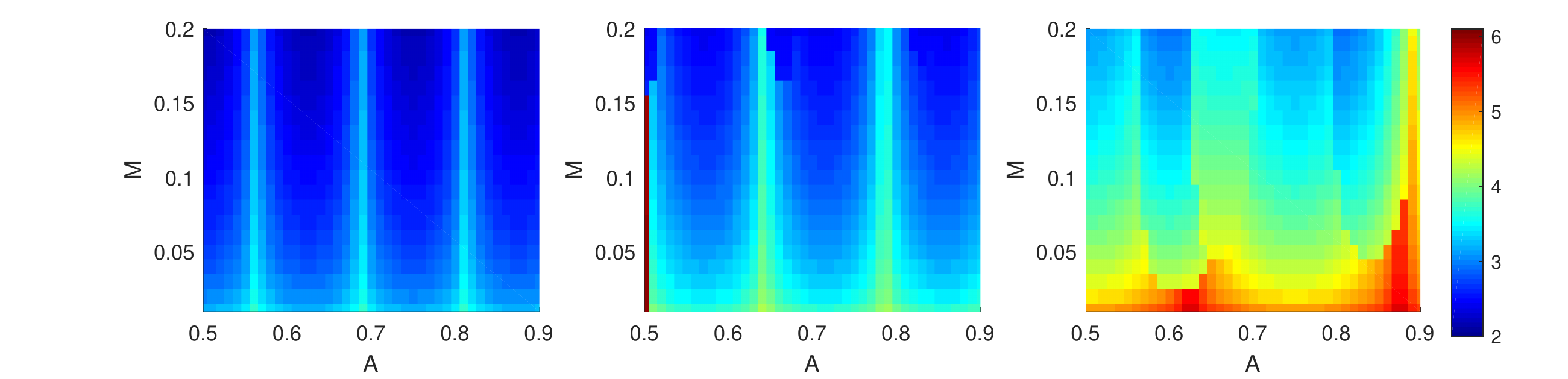}
  \caption{Time to initial clustering: $\ln (t_{\text{clu}})$}
  \label{initial_t_cluster}
\end{subfigure}
\caption{Characterization of the initial clustering behavior based on the dominant wave-number in the Fourier expansion of the continuum-agent model for different values of $M$ and $A$ with noise levels $\sigma = 0.01$ (\textit{left}), $\sigma = 0.02$ (\textit{middle}), and $\sigma = 0.03$ (\textit{right}).}
\label{initial_clustering}
\end{figure}

Comparing the left, middle and right panels of Fig.~\ref{initial_clustering} corresponding to different levels of noise, we observe that as the level of noise increases, the number of clusters in the possible clustering behavior of the system decreases (see Fig.~\ref{initial_n_cluster}), while the timing experiences a general increase (see Fig.~\ref{initial_t_cluster}). 
This effect has been already shown in Fig.~\ref{noise_level}. 
In particular, with respect to the timing, we notice that as the level of noise decreases, the \emph{initial} clustered profile emerges faster; see Fig.~\ref{noise_ordpar}.

For low levels of noise, e.g., $\sigma = 0.01$ (see the left panels in Fig.~\ref{initial_clustering}), the dominant wave-number does not depend on the $M$ or $A$. 
In this case, the most important effect of the first moment of radical opinions density $A$ is on the position of clusters. 
That is, the clustered profile emerges in a way that we observe a particular cluster formed at the average radical opinion $A$. 
The parameter $A$ also affects the timing of the clustering behavior in a periodic fashion. 
On the other hand, the zeroth moment of radical opinions density $M$ only affects the timing of the clustering behavior: as $M$ increases, $t_{\text{clu}}$ decreases.
Fig.~\ref{initial_comp1} shows the simulation results for $\sigma = 0.01$ and compares the evolution of opinions for different values of $M$ and $A$. 
For the continuum model in the the top panels of Fig.~\ref{initial_comp1} we observe that indeed a 4-cluster profile has emerged in all systems. 
Comparing Figs.~\ref{initial_compare_noise1_PDE1} and~\ref{initial_compare_noise1_PDE2} shows that $M$ only affects the timing of clustering behavior. This effect is better seen in Fig.~\ref{initial_comp1_ordpar_cont} where we observe a faster convergence of order parameter for $\mathcal{S}_2$ with larger $M$. On the other hand, comparing Figs.~\ref{initial_compare_noise1_PDE2} and~\ref{initial_compare_noise1_PDE3} corresponding to $A=0.85$ and $A=0.7$, respectively, we observe a change in the positioning of the clusters.
Monte Carlo simulations of the discrete-agent model reveals that the same general description also holds for this system. 
This is particularly seen in the time evolution of the order parameter in the discrete-agent model as depicted in Fig.~\ref{initial_comp1_ordpar_disc}. 
However, we once again note that there are differences between the behavior of the continuum- and discrete-agent models. 
In particular, the evolution of order parameter in Fig.~\ref{initial_comp1_ordpar_cont} shows that the continuum-agent model has seemingly converged to steady-state with four clusters, while this is clearly not the case for the discrete-agent model as can be seen in Fig.~\ref{initial_comp1_ordpar_disc}. 
Indeed, in the discrete-agent model, as described in the beginning of this section, all the possible clusters formed around opinion values other than $x = 0, 1, A$ will necessarily disappear in the steady state profile, where the time required for their disappearance depends on the noise level and particularly the size of these clusters. 
Hence, unlike the discrete-agent model, for the continuum-agent model (in the limit $N \rightarrow \infty$), the system may require \emph{infinite time} for this merging of the clusters to occure. 
This, in turn, can lead to differet behaviors in the discrete- and continuum-agent models over exponentially large times scales~\cite{Wang17}; see also the evolution of order parameter in Fig.~\ref{noise_ordpar}. 

\begin{figure}

\begin{subfigure}{.33\textwidth}
  \centering
  \includegraphics[width=1\linewidth]{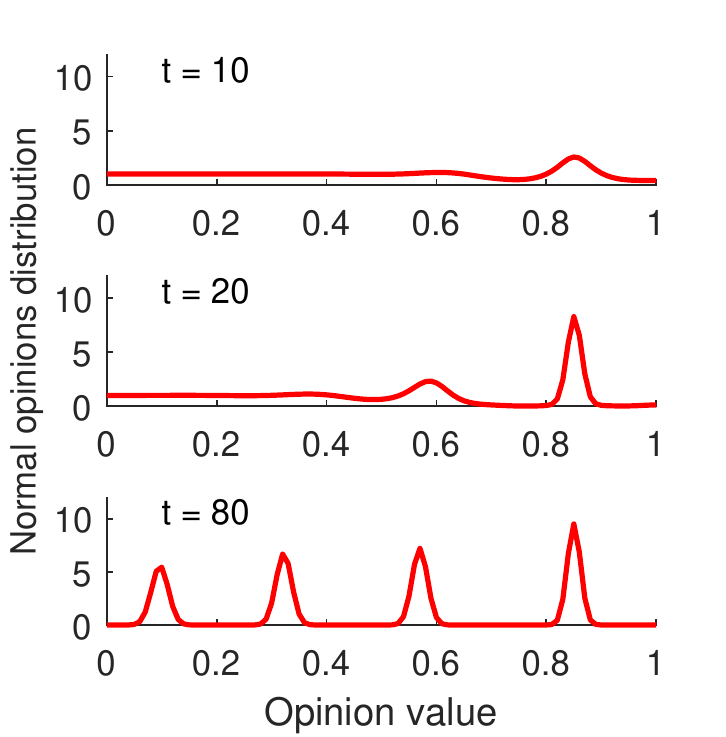}
  \caption{Continuum - $\mathcal{S}_1:(0.05,0.85)$}
  \label{initial_compare_noise1_PDE1}
\end{subfigure}%
\begin{subfigure}{.33\textwidth}
  \centering
  \includegraphics[width=1\linewidth]{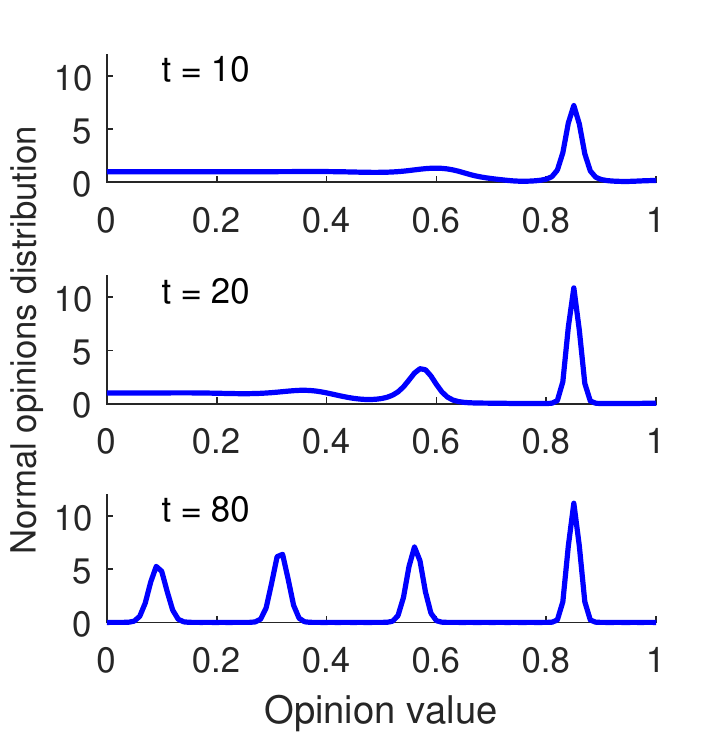}
  \caption{Continuum - $\mathcal{S}_2:(0.15,0.85)$}
  \label{initial_compare_noise1_PDE2}
\end{subfigure}%
\begin{subfigure}{.33\textwidth}
  \centering
  \includegraphics[width=1\linewidth]{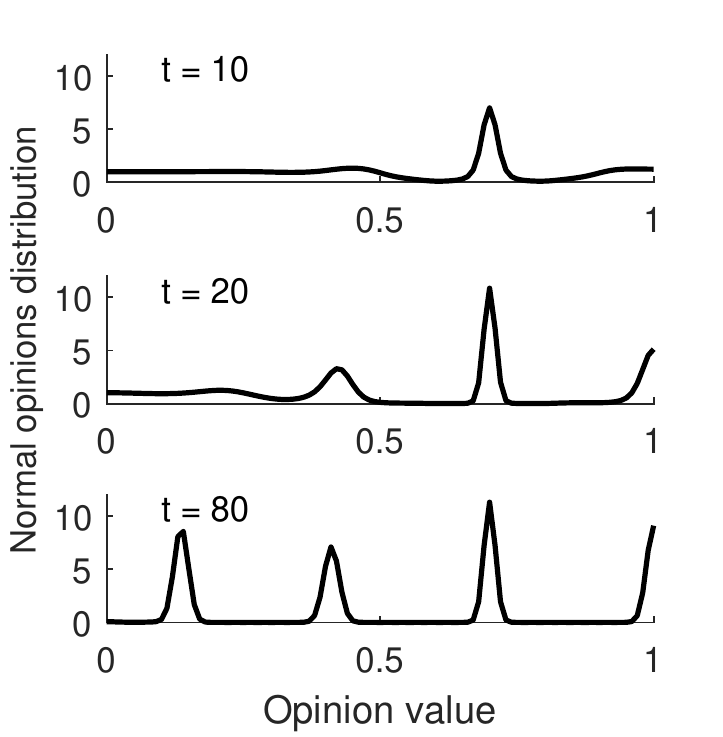}
  \caption{Continuum - $\mathcal{S}_3:(0.15,0.7)$}
  \label{initial_compare_noise1_PDE3}
\end{subfigure}

\begin{subfigure}{.33\textwidth}
  \centering
  \includegraphics[width=1\linewidth]{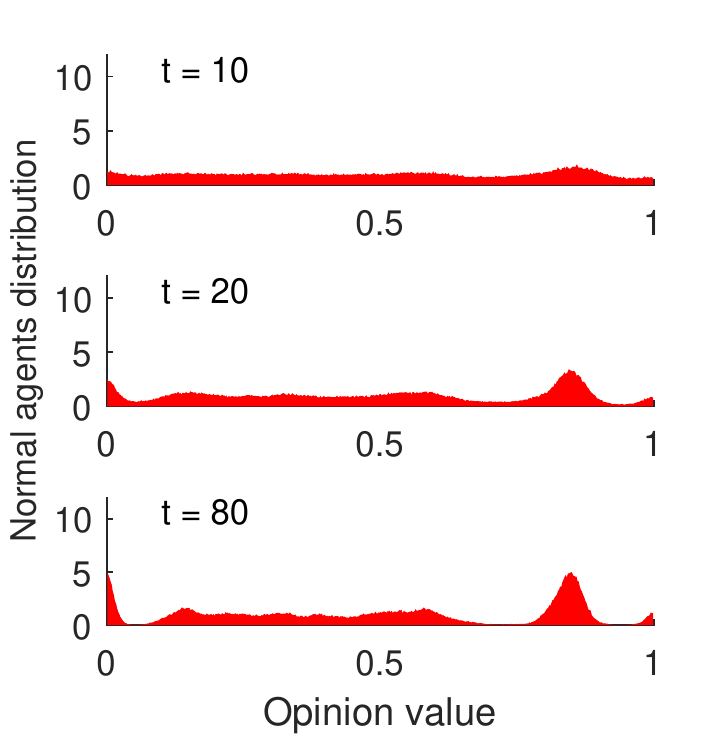}
  \caption{Discrete - $\mathcal{S}_1:(0.05,0.85)$}
  \label{initial_compare_noise1_SDE1}
\end{subfigure}%
\begin{subfigure}{.33\textwidth}
  \centering
  \includegraphics[width=1\linewidth]{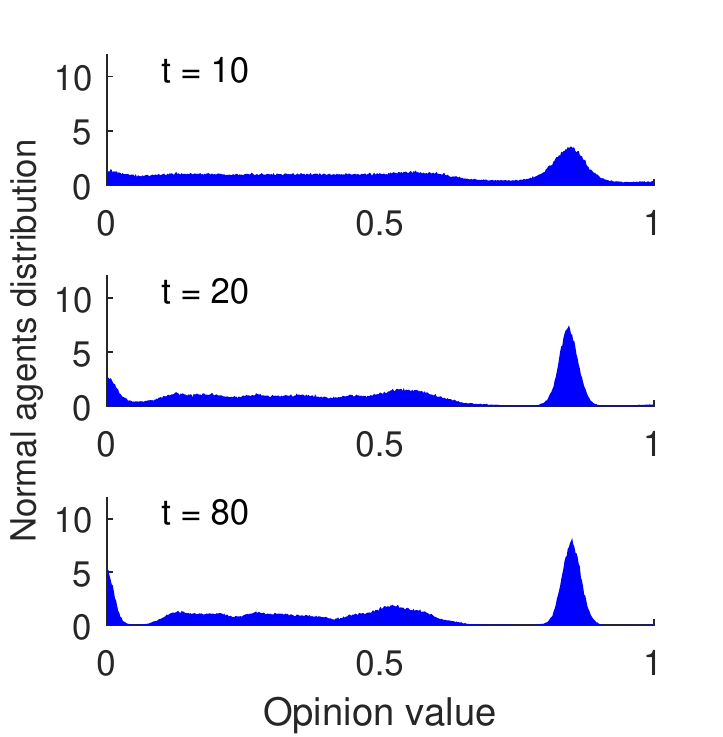}
  \caption{Discrete - $\mathcal{S}_2:(0.15,0.85)$}
  \label{initial_compare_noise1_SDE2}
\end{subfigure}%
\begin{subfigure}{.33\textwidth}
  \centering
  \includegraphics[width=1\linewidth]{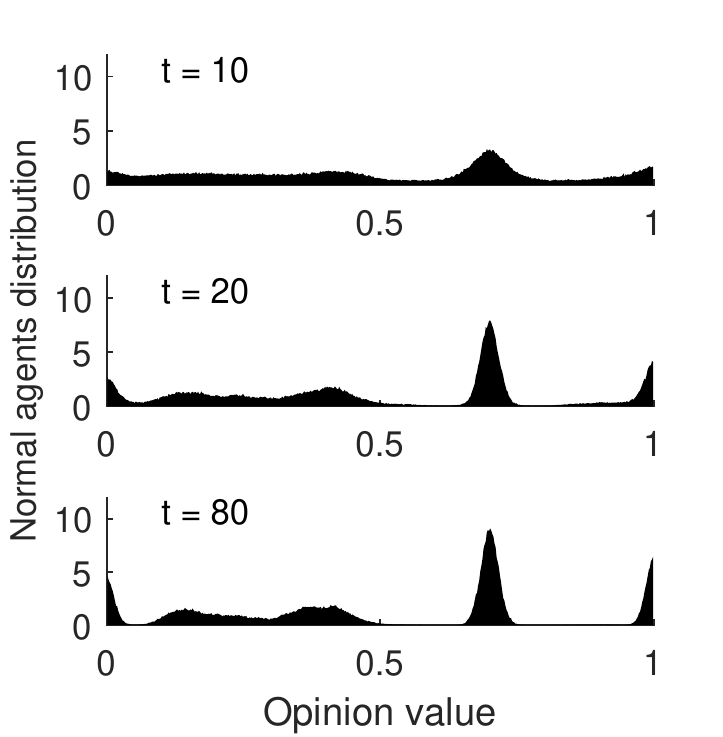}
  \caption{Discrete - $\mathcal{S}_3:(0.15,0.7)$}
  \label{initial_compare_noise1_SDE3}
\end{subfigure}

\begin{subfigure}{.5\textwidth}
  \centering
  \includegraphics[width=.9\linewidth]{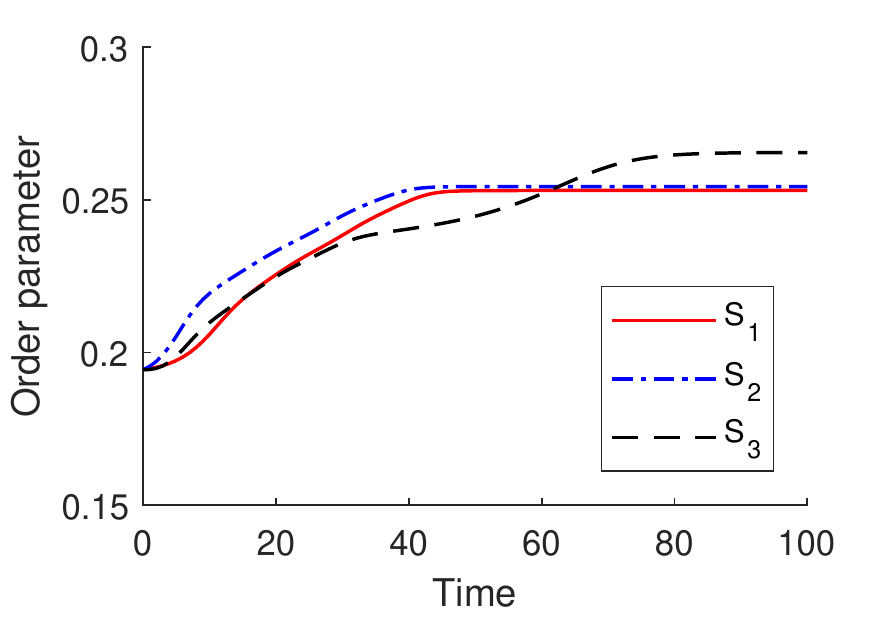}
  \caption{Continuum-agent model}
  \label{initial_comp1_ordpar_cont}
\end{subfigure}%
\begin{subfigure}{.5\textwidth}
  \centering
  \includegraphics[width=.9\linewidth]{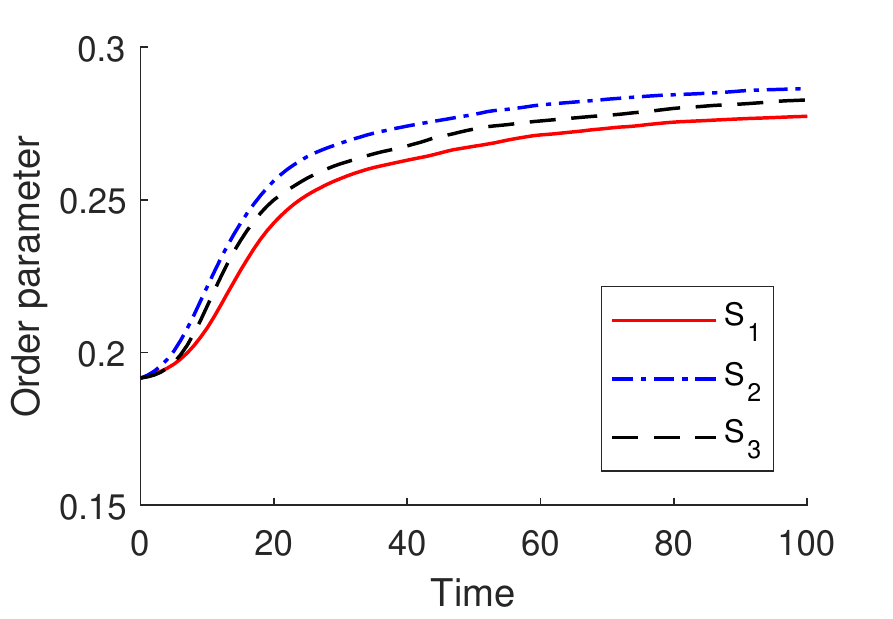}
  \caption{Discrete-agent model}
  \label{initial_comp1_ordpar_disc}
\end{subfigure}

\caption{Numerical simulation of the model with $\sigma=0.01$ for different values of $(M,A)$, namely, $\mathcal{S}_1:(0.05,0.85)$,  $\mathcal{S}_2:(0.15,0.85)$, and  $\mathcal{S}_3:(0.15,0.7)$. The upper panels (A, B, and C) show the opinion distribution for continuum-agent model. The middle panels (D, E, and F) show the the result of Monte Carlo simulation (average of 300 realizations) of discrete-agent model. The lower panels (G and H) show the evolution of order parameter for these systems.}
\label{initial_comp1}
\end{figure}

As shown in Fig.~\ref{initial_clustering}, for higher levels of noise, e.g., $\sigma = 0.03$, we observe nonlinear effects. 
That is, $M$ and $A$ start to affect the dominant wave-number (see the middle and right panels of Fig.~\ref{initial_n_star}). Nevertheless, these effects are limited as the number of clusters is still 3 or 4 for $\sigma = 0.02$, and 2 or 3 for $\sigma = 0.03$. 
Besides, we still observe a general increase in the timing of the clustering behavior as $M$ decreases.
Fig.~\ref{initial_comp3} shows the evolution of normal opinions/agents distribution and the corresponding order parameter for three different combinations of $M$ and $A$ at the noise level $\sigma = 0.03$.
Once again, in the continuum-agent model we observe a 2-cluster profile for all combinations as shown in the top panels of Fig.~\ref{initial_comp3}. 
For the discrete-agent model, we observe a 3-cluster behavior in which the cluster formed between the two clusters at $x=0$ and $x=A$ has already disappeared for $\mathcal{S}_3$ in Fig.~\ref{initial_compare_noise3_SDE3} at $t=400$. 
Indeed, our simulations for $\sigma = 0.03$ reveals a single-cluster profile around the average radical opinion $x=A$ after a large enough time; see Fig.~\ref{noise_level}. 

\begin{figure}

\begin{subfigure}{.33\textwidth}
  \centering
  \includegraphics[width=1\linewidth]{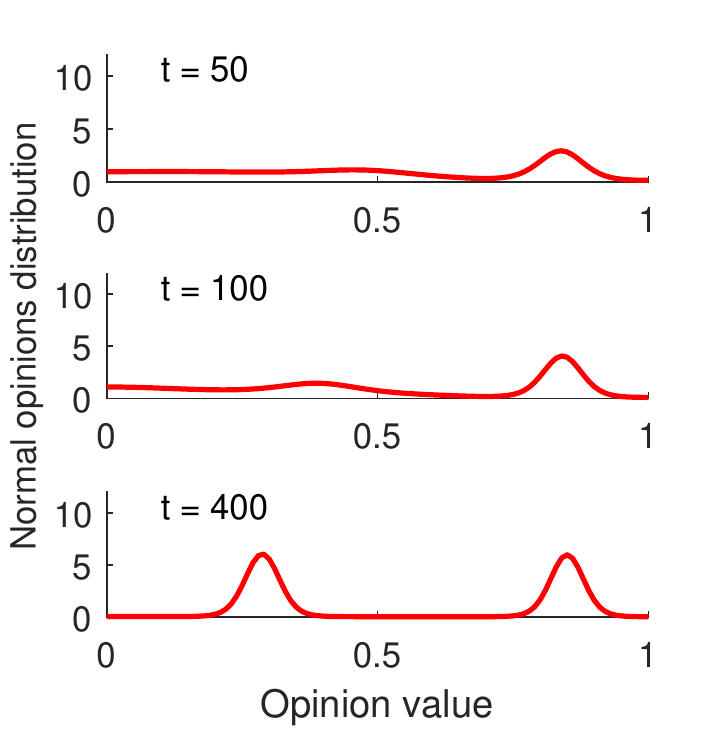}
  \caption{Continuum - $\mathcal{S}_1:(0.05,0.85)$}
  \label{initial_compare_noise3_PDE1}
\end{subfigure}%
\begin{subfigure}{.33\textwidth}
  \centering
  \includegraphics[width=1\linewidth]{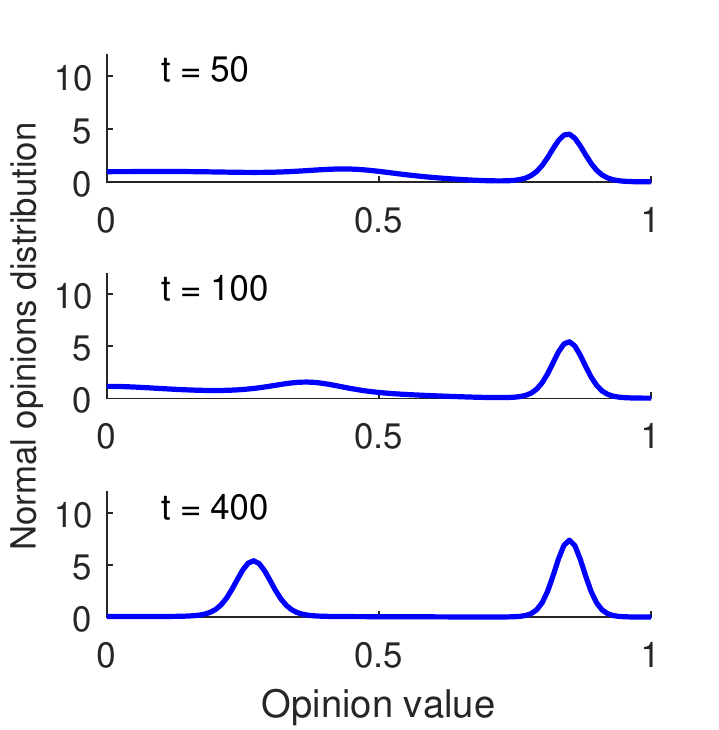}
  \caption{Continuum - $\mathcal{S}_2:(0.15,0.85)$}
  \label{initial_compare_noise3_PDE2}
\end{subfigure}%
\begin{subfigure}{.33\textwidth}
  \centering
  \includegraphics[width=1\linewidth]{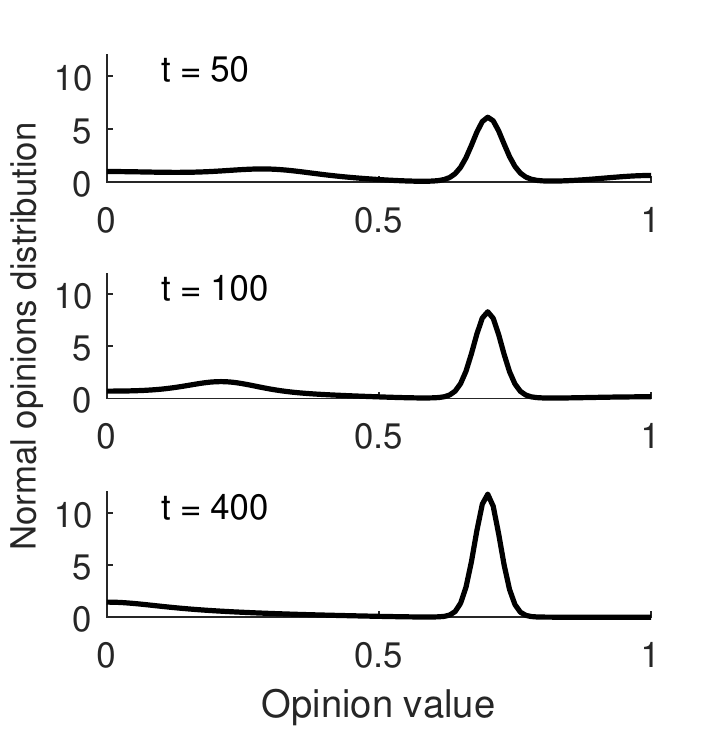}
  \caption{Continuum - $\mathcal{S}_3:(0.15,0.7)$}
  \label{initial_compare_noise3_PDE3}
\end{subfigure}

\begin{subfigure}{.33\textwidth}
  \centering
  \includegraphics[width=1\linewidth]{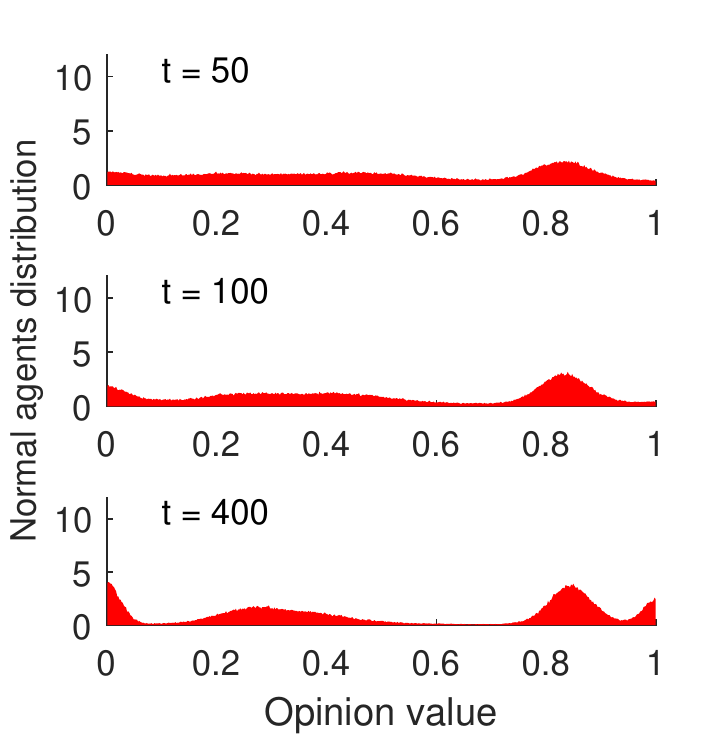}
  \caption{Discrete - $\mathcal{S}_1:(0.05,0.85)$}
  \label{initial_compare_noise3_SDE1}
\end{subfigure}%
\begin{subfigure}{.33\textwidth}
  \centering
  \includegraphics[width=1\linewidth]{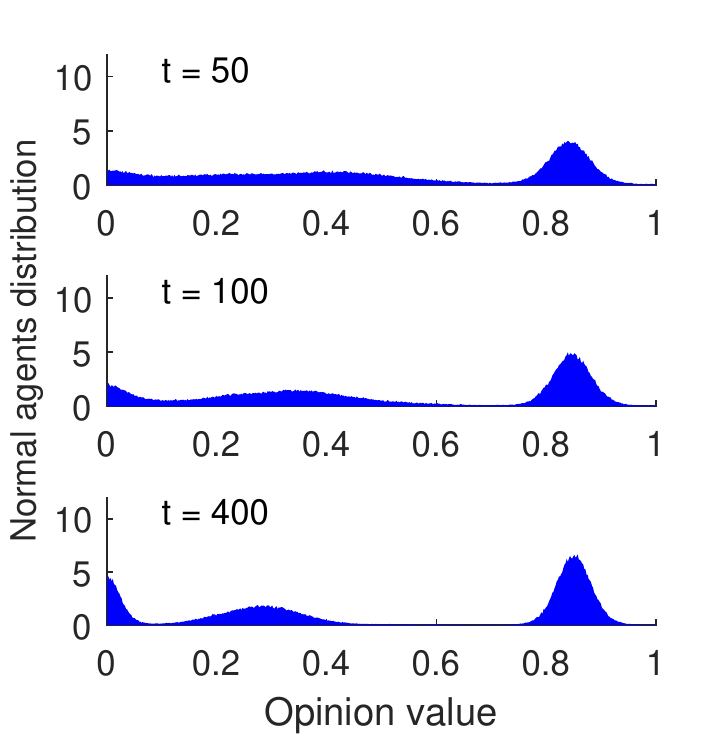}
  \caption{Discrete - $\mathcal{S}_2:(0.15,0.85)$}
  \label{initial_compare_noise3_SDE2}
\end{subfigure}%
\begin{subfigure}{.33\textwidth}
  \centering
  \includegraphics[width=1\linewidth]{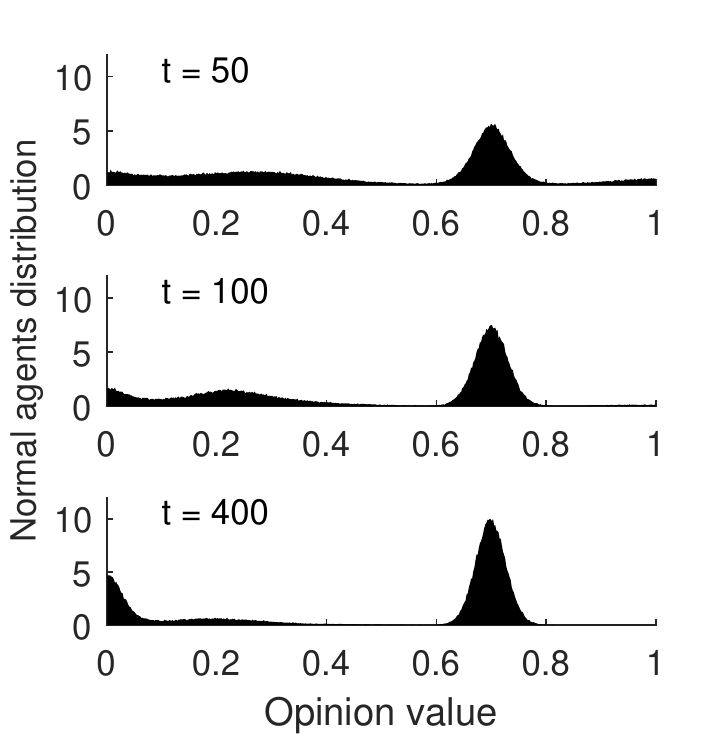}
  \caption{Discrete - $\mathcal{S}_3:(0.15,0.7)$}
  \label{initial_compare_noise3_SDE3}
\end{subfigure}

\begin{subfigure}{.5\textwidth}
  \centering
  \includegraphics[width=.9\linewidth]{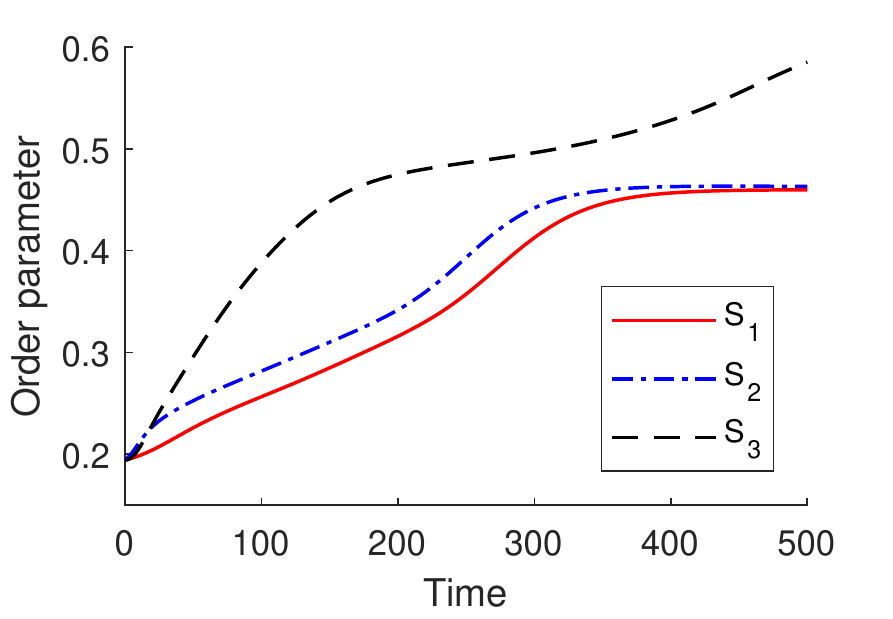}
  \caption{Continuum-agent model}
  \label{initial_comp3_ordpar_cont}
\end{subfigure}%
\begin{subfigure}{.5\textwidth}
  \centering
  \includegraphics[width=.9\linewidth]{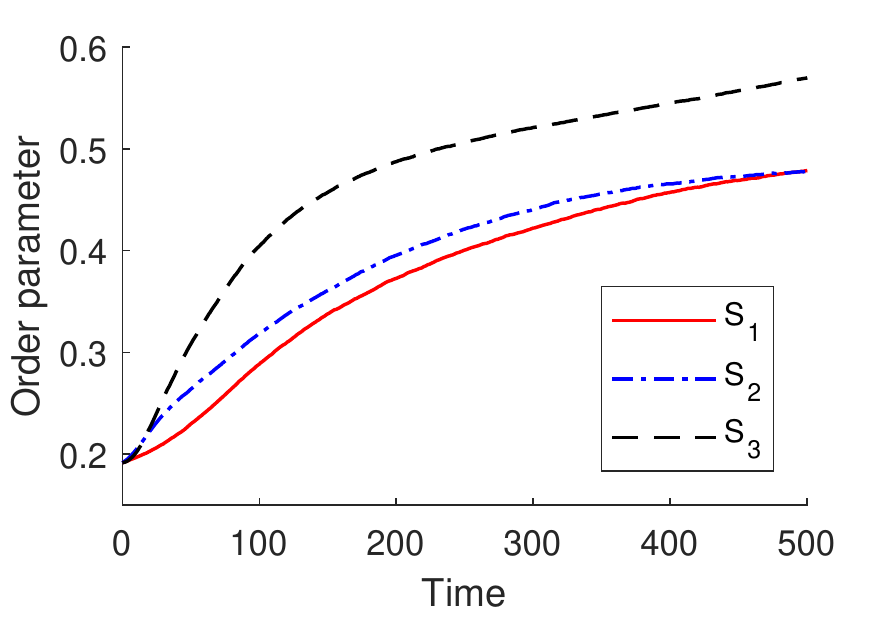}
  \caption{Discrete-agent model}
  \label{initial_comp3_ordpar_disc}
\end{subfigure}

\caption{Numerical simulation of the model with $\sigma = 0.03$ for different values of $(M,A)$, namely, $\mathcal{S}_1:(0.05,0.85)$,  $\mathcal{S}_2:(0.15,0.85)$, and  $\mathcal{S}_3:(0.15,0.7)$. The upper panels (A, B, and C) show the opinion distribution for continuum-agent model. The middle panels (D, E, and F) show the the result of Monte Carlo simulation (average of 300 realizations) of discrete-agent model. The lower panels (G and H) show the evolution of order parameter for these systems.}
\label{initial_comp3}
\end{figure}

To summarize the discussions above, for concentrated distribution of radicals, the main effect of the zeroth and first moments of radical distribution is on the timing and positioning of the possible clustering behavior, respectively. The number of clusters (to be precise, the life-time of possible transient clustered profiles) is mainly determined by the noise level of the system. This is particularly the case for lower levels of noise.  

\section{Final Remarks} 
\label{sec:conclusion}

In this paper, we considered a macroscopic model for bounded confidence opinion dynamics with environmental noise. 
In particular, we studied the effect of exogenous influence by adding a mass of radical (continuum) agents to the original population of the normal agents. 
The well-posedness of the continuum dynamics expressed as a nonlinear Fokker-Planck equation was established under some assumptions on the initial density of the normal opinions and the density of radical opinions. 
The long-term behavior of the model was also discussed by considering the corresponding stationary equation. 
In this regard, we provided a sufficient condition based on the noise level that guarantees exponential convergence of the dynamics towards the stationary state that can be made arbitrarily close to uniform distribution. 
In the context of opinion dynamics, we derived a theoretical bound on the minimum noise level required to counteract the effect of radical agents and keep the system in a somewhat uniform state. 

Exploiting the periodicity of the considered continuum-agent model, we used Fourier analysis to provide a general framework for characterization of the clustering behavior of the system with uniform initial distribution.
We then applied this framework for a particular distribution of radical opinions, namely, a relatively concentrated triangular distribution. 
In particular, we studied the effect of the relative mass of the radicals on the critical noise level for order-disorder transition. 
As expected, the analysis showed that for a larger number of radical agents, the critical noise level  increases. 
We note that this result corresponds to the theoretical result on the global estimate for stationary state. 
However, comparing the theoretical lower bound on the noise level for the global estimate with its counterpart derived numerically, we find that the theoretical bound is quite conservative, which was expected considering its theoretical nature. 
We also considered the effect of relative mass and average opinion of radicals on the number, timing and positioning of the clusters for noises smaller than the critical noise level. Here, the noise level was shown to be the main factor in determining the number of clusters. 
Meanwhile, the relative mass of the radicals mainly affects the timing of the clustering behavior, that is, for larger masses of radicals, the clustering behavior is expected to emerge faster. 
On the other hand, the main effect of the average opinion of the radicals is on the positioning of the clusters; the clusters are positioned in a way that we see a cluster formed around the average opinions of radicals. 
The numerical simulations of the continuum-agent model and the corresponding discrete-agent model were in agreement with these results.

\appendix 

\section{Preliminaries on Function Spaces} 
\label{Append: prelimin}

The definitions provided here are mostly borrowed from~\cite{Evans}. 
Let $\{f_k)\}_{k=1}^{\infty}$ be a sequence in a Banach space $B$ with norm $\Vert \cdot \Vert_B$. 
The \emph{strong} convergence $f_k \rightarrow f$ implies $\Vert f_k - f \Vert_B \rightarrow 0$, while the \emph{weak} convergence $f_k \rightharpoonup f$ implies $ g (f_k) \rightarrow g(f)$ for all bounded linear functionals $g: B \rightarrow \R$.

Let $f:\tilde{X} \rightarrow \R$ be a measurable function on $\tilde{X} = (-1,1)$. The $L^p$-norm of $f$ is defined as follows
\begin{eqnarray*}
\Vert f \Vert_{L^p (\tilde{X})} = \left\{\begin{array}{ll}
\left( \int_{\tilde{X}} |f(x)|^p \right)^{\frac{1}{p}}, & 1\leq p < \infty \\
\esup_{\tilde{X}} |f(x)|, &  p = \infty.
\end{array}
\right.
\end{eqnarray*}
Then, $L^p (\tilde{X})$ denotes the Banach space of all measurable functions $f:\tilde{X} \rightarrow \R$ for which $\Vert f \Vert_{L^p (\tilde{X})} < \infty$.

Let $f,g \in L^1_{loc} (\tilde{X})$ be locally summable functions (i.e., $f,g$ have a finite integral over every compact subset of $\tilde{X}$). We say that $g$ is the $k$-th \emph{weak} (partial) derivative of $f$, if 
$$ \int_{\tilde{X}} f \ \partial^k_x \phi \ \diff x = (-1)^k \int_{\tilde{X}} g \ \phi \ \diff x, $$
for all test functions $\phi \in C^{\infty}_{c}(\tilde{X})$ (infinitely differentiable functions $\phi : \tilde{X} \rightarrow \R$ with compact support in $\tilde{X}$).

$H^k(\tilde{X})$ for $k \in \N$ is used to denote the Sobolev space $W^{k,2}(\tilde{X})$ consisting of functions $f \in L^2 (\tilde{X})$ whose \emph{weak} derivatives up to order $k$ exist and belong to $L^2 (\tilde{X})$. Note that $H^k(\tilde{X})$ is a Hilbert space. 

We use the subscript \emph{per} to denote the closed subspace of \textit{periodic} functions in the corresponding function space, e.g.,
\begin{align*}
L^p_{per}(\tilde{X}) = \{ f \in L^p(\tilde{X}): f(-1) = f(1) \} \text{ and } H^k_{per}(\tilde{X}) = \{ f \in H^k(\tilde{X}): f(-1) = f(1) \}.
\end{align*}
Similarly, we use the subscript \emph{ep} to denote the closed subspace of \textit{even periodic} functions in the corresponding function space, e.g.,
\begin{align*}
&L^p_{ep}(\tilde{X}) = \{ f \in L^p_{per}(\tilde{X}): f(-x) = f(x), \ \forall x \in \tilde{X} \}, \\
&H^k_{ep}(\tilde{X}) = \{ f \in H^k_{per}(\tilde{X}): f(-x) = f(x), \ \forall x \in \tilde{X} \}.
\end{align*}

We denote the dual space of $H^1_{per}(\tilde{X})$ by $H^{-1}_{per}(\tilde{X})$, that is, the space of bounded linear functionals on $H^1_{per}(\tilde{X})$. 
Moreover, we use $\langle\cdot,\cdot\rangle$ to denote the corresponding paring of $H^1_{per}(\tilde{X})$ and $H^{-1}_{per}(\tilde{X})$. 
That is, for $f \in H^1_{per}(\tilde{X})$ and $g \in H^{-1}_{per}(\tilde{X})$, we use $\langle g, f\rangle $ to denote the real number $g(f)$. 
Since periodic boundary condition allows for integration by parts without extra terms, $H^{-1}_{per}(\tilde{X})$ has most of the properties of the space $H^{-1}(\tilde{X})$, the dual space of $H^{-1}_0(\tilde{X})$; see~\cite[Section 5.9.1]{Evans} for a detailed description of the space $H^{-1}(\tilde{X})$. 
In particular, one can extend the result in~\cite[Section 5.9, Theorem 3]{Evans} to derive ~\cite[Theorem 3.8]{Chazelle15}. For reader's convenience, the corresponding theorem is presented below.

\begin{TThm}
\cite[Theorem 3.8]{Chazelle15} Let the function $f: \tilde{X}\times [0,T] \rightarrow \R$ be such that 
\begin{align*}
f \in L^2(0,T;H^{1}_{per}(\tilde{X})) \text{ and } f_t \in L^2(0,T;H^{-1}_{per}(\tilde{X})).
\end{align*} 
Then, $f \in C(0,T;L^{2}_{per}(\tilde{X}))$ after possibly being redefined on a set of measure zero. Moreover, the mapping $t\mapsto \Vert f(t) \Vert_{L^2 (\tilde{X})}^2$ is absolutely continuous, with
\begin{equation*}
\frac{\diff}{\diff t} \Vert f(t) \Vert_{L^2 (\tilde{X})}^2 = 2 \langle f_t,f \rangle,
\end{equation*}
for almost every $t \in [0,T]$.
\end{TThm}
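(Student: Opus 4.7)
The plan is to proceed by mollification in the time variable, which is the standard route (Evans, Section 5.9, Theorem 3) adapted to periodic boundary conditions on $\tilde{X}$. First I would extend $f$ slightly beyond $[0,T]$—for example by reflection around the endpoints composed with a smooth temporal cutoff—producing $\wt f$ such that $\wt f \in L^2(\R; H^1_{per}(\tilde{X}))$ and $\wt f_t \in L^2(\R; H^{-1}_{per}(\tilde{X}))$. Then I would mollify in time, $f_\epsilon(t) := (\eta_\epsilon \star_t \wt f)(t)$, with $\eta_\epsilon$ a standard non-negative mollifier supported in $[-\epsilon,\epsilon]$. This produces functions that are $C^\infty$ in $t$ with values in $H^1_{per}(\tilde{X})$, and satisfy $f_\epsilon \to f$ in $L^2(0,T;H^1_{per})$ together with $(f_\epsilon)_t \to f_t$ in $L^2(0,T;H^{-1}_{per})$ as $\epsilon \to 0$.

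For the smooth approximants the identity $\tfrac{\diff}{\diff t} \Vert f_\epsilon(t)\Vert_{L^2}^2 = 2 \langle (f_\epsilon)_t(t), f_\epsilon(t)\rangle$ holds pointwise in $t$ by elementary calculus for Banach-valued smooth functions, and the pairing reduces to the genuine $L^2$ inner product since $f_\epsilon(t) \in H^1_{per} \hookrightarrow L^2_{per}$. Integrating from $s$ to $t$ the corresponding identity for $\Vert f_\epsilon - f_\delta\Vert_{L^2}^2$ and applying Cauchy--Schwarz in the $\langle H^{-1}_{per}, H^1_{per}\rangle$ duality give
\[
\bigl| \Vert f_\epsilon(t) - f_\delta(t)\Vert_{L^2}^2 - \Vert f_\epsilon(s) - f_\delta(s)\Vert_{L^2}^2 \bigr| \leq 2 \Vert (f_\epsilon - f_\delta)_t\Vert_{L^2(0,T;H^{-1}_{per})} \Vert f_\epsilon - f_\delta\Vert_{L^2(0,T;H^1_{per})},
\]
whose right-hand side vanishes as $\epsilon,\delta \to 0$. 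Averaging in $s$ over $[0,T]$ and using $f_\epsilon \to f$ in $L^2(0,T;L^2_{per})$, I would deduce that $\{f_\epsilon\}$ is Cauchy in $C([0,T];L^2_{per}(\tilde{X}))$. Its limit is then the continuous-in-time representative of $f$.

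For the second conclusion I would pass to the limit in
\[
\Vert f_\epsilon(t)\Vert_{L^2}^2 - \Vert f_\epsilon(s)\Vert_{L^2}^2 = 2 \int_s^t \langle (f_\epsilon)_\tau(\tau), f_\epsilon(\tau)\rangle \, \diff\tau.
\]
The left-hand side converges by the $C([0,T];L^2)$ convergence just established; the right-hand side converges to $2\int_s^t \langle f_\tau, f\rangle \diff\tau$ because one factor converges strongly in $L^2(s,t;H^1_{per})$ and the other in $L^2(s,t;H^{-1}_{per})$. Since $\tau \mapsto \langle f_\tau(\tau), f(\tau)\rangle$ is integrable on $[0,T]$, the identity $\Vert f(t)\Vert_{L^2}^2 - \Vert f(s)\Vert_{L^2}^2 = 2\int_s^t \langle f_\tau,f\rangle \diff\tau$ yields both absolute continuity of $t\mapsto \Vert f(t)\Vert_{L^2}^2$ and, by Lebesgue differentiation, the claimed formula a.e.

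The main obstacle I expect is constructing the temporal extension so that \emph{both} regularities $f \in L^2(H^1_{per})$ and $f_t \in L^2(H^{-1}_{per})$ survive simultaneously, without inadvertently creating distributional Dirac masses at the endpoints $t=0$ and $t=T$ through an incompatible reflection. Once this is handled cleanly—either via the standard even reflection followed by a smooth cutoff, or via first solving an auxiliary problem on a larger interval—the remainder becomes a routine duality bookkeeping exercise, with periodicity of $\tilde{X}$ ensuring no spatial boundary terms appear in the integration by parts underlying the pairing $\langle \cdot,\cdot\rangle$.
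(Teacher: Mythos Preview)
Your proposal is correct and follows precisely the route the paper indicates: the paper does not prove this statement itself but cites it from \cite{Chazelle15} and remarks that it is obtained by extending \cite[Section~5.9, Theorem~3]{Evans} to the periodic setting, which is exactly the mollification-in-time argument you outline. Your anticipated obstacle about the temporal extension at the endpoints is real but standard, and is handled in Evans by the same extension-plus-cutoff device you describe.
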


\section{Approximate Solution to Stationary Equation} 
\label{Append:approx}

In order to provide an approximate solution to the stationary equation~\eqref{pdes}, we assume radicals are highly concentrated around a particular opinion value $x=A$. 
To be precise, we assume that the average opinion of radicals is 
$
A = \int_X x \ \rho_r (x) \ \diff x 
$ 
and the variance of radicals
$
\sigma_r^2 = \int_X (x-A)^2 \ \rho_r (x) \ \diff x 
$
is much smaller than the confidence range $R$.
It helps to think of the limit being a point mass of radicals located at opinion value $x=A$. 
We further assume that the noise level $\sigma$ is also much smaller than $R$ so that the inter-cluster influences (from other possible clusters) can be ignored.
Using these assumptions, we can expect this particular cluster of normal agents to be concentrated around $A$. 
This implies that in order to evaluate the integral in~\eqref{StaSol}, we only need to consider values of $y$ near $A$. 
Under these assumptions, for $R < A < L-R$, we can write 
\begin{align*}
\int_{0}^{x} w \star (\rho + M \rho_r) \ \diff z &= \int_{0}^{x} \int (z-y) \ \textbf{1}_{|y-z|\leq R} \ (\rho(y) + M \rho_r(y)) \ \diff y \diff z \nonumber \\
&\approx \int_{0}^{x} \int_{A-R}^{A+R} (z - A) \ \textbf{1}_{|z - A|\leq R} \ (\rho(y) + M \rho_r(y)) \ \diff y \diff z \nonumber \\
&=  \int_{0}^{x} (z - A)  \textbf{1}_{|z - A|\leq R} \ \diff z  \int_{A-R}^{A+R} (\rho(y) + M \rho_r(y)) \ \diff y \nonumber \\
&= \frac{1}{2} \left( (x - A)^2 - R^2 \right) \textbf{1}_{|x - A|\leq R} \int_{A-R}^{A+R}  (\rho(y) + M \rho_r(y)) \ \diff y \nonumber \\
&\approx \frac{M+1}{2} \ \left((x - A)^2 - R^2 \right) \ \textbf{1}_{|x - A|\leq R} .
\end{align*}
Inserting this result in~\eqref{StaSol}, we have
\begin{equation*}
\rho^s(x) = \frac{1}{K} \exp \left\{ - \frac{M+1}{\sigma^2} \ \left((x - A)^2 - R^2 \right) \ \textbf{1}_{|x - A|\leq R} \right\},
\end{equation*}
which can also be expressed as (by modefyying the normalizing constant $K$)
\begin{equation*}
\rho^s(x) = \frac{1}{K} \exp \left\{ -\frac{M+1}{\sigma^2} \min \left\{(x - A)^2, \ R^2 \right\} \right\}.
\end{equation*}
In Fig.~\ref{noise_distr}, this approximate solution is shown for $\sigma=0.03$ and $\sigma=0.04$, where the system has converged to a single cluster profile in both continuum- and discrete-agent models corresponding to the assumptions for derivation of the approximate solution. 
This result shows that the approximate solution is indeed a good approximation as it almost perfectly matches the numerical solution of the continuum-agent model.

\section{Euler-Maruyama Method for Discrete-agent Model} 
\label{Append: Euler-Maruyama}

The interacting SDEs considered for the simulation of the discrete-agent model in this study is
\begin{eqnarray} \label{sde2a}
\left\{
\begin{array}{l}
\diff x_i = -\frac{1}{N} \left( \sum_{j \in \mathcal{N}_i}(x_i - x_j^{ext}) + \sum_{j \in \mathcal{N}_i}(x_i - x_{r_j}^{ext})  \right) \diff t+ \sigma \  \diff W^{i}_t, \\
x_i(0) = x_{i_0}.
\end{array}
\right.
\end{eqnarray}
where $x_i^{ext}, \ i=1,\ldots N$ are the opinions of normal agents and $x_{r_i}^{ext}, \ i=1,\ldots N_r$ are the opinions of radical agents with $N_r = MN$. 
The superscript $ext$ corresponds to the even $2$-periodic extension as explained below.  
In order to solve~\eqref{sde2a} numerically, we employ the Euler-Maruyama method. 
Algorithm~\ref{EMAL} summarizes the numerical scheme.
As described in Section~\ref{sec:NumSim}, we assume that the radicals have a triangular distribution centered at $A$ with width $2S$. 
That is, we produce a random sample of radicals with size $N_r$ from the triangular distribution~\eqref{TriangStub} (Step 0).
In particular, for complete correspondence between the discrete- and continuum- agent models, we also consider the effect of even $2$-periodic extension in our simulations. 
To this end, we use even $2$-periodic extensions of $\textbf{x}$ and $\textbf{x}_r$ for calculating the sum on the r.h.s. of~\eqref{sde2a} (vectors denoted by $\textbf{x}^{ext}$ and $\textbf{x}_r^{ext}$ in Steps 0, 1 and 2). 
Also, due to periodicity, in each iteration, the opinion values outside the support $X = [0,1]$ are \emph{reflected} back to $X$ (Step 5).

\begin{algorithm}
   \caption{Euler-Maruyama method for even $2$-periodic extension of~\eqref{sde2a}}
   \label{EMAL}
\begin{algorithmic}
	\STATE {\bfseries Step 0.} $\textbf{x}_r = (x_{r_1},x_{r_2}, \cdots, x_{r_{N_r}})^T \sim \rho_r(x)$;
	
	\ \ \ \ \ \ \ \ \ \ \ \ $\textbf{x}_r^{ext} = [\textbf{x}_r;\  -\textbf{x}_r;\  2-\textbf{x}_r]$;
	\FOR{$t=0$ {\bfseries to} $t = \frac{T}{\Delta t}-1$:}   
   \STATE {\bfseries Step 1.} $\textbf{x}^{ext}(t) = [\textbf{x}(t);\  -\textbf{x}(t);\  2-\textbf{x}(t)]$, where $\textbf{x}(t) = (x_1(t),x_2(t), \cdots, x_N(t))^T$;
   \STATE {\bfseries Step 2.} $\dot{x}_i(t)  = -\frac{1}{N} \left( \sum_{j \in \mathcal{N}_i}(x_i - x^{ext}_j) + \sum_{j \in \mathcal{N}_i}(x_i - x_{r_j}^{ext})  \right)$;
   \STATE {\bfseries Step 3.} $dW^{i}_t = z_i \sqrt{\Delta t}$, where $z_i \sim \mathrm{N}(0,1)$;
   \STATE {\bfseries Step 4.} $x_{i}(t+1) = x_i(t) + \dot{x}_i(t) \cdot \Delta t + \sigma \ dW^{i}_t$;
   \STATE {\bfseries Step 5.} $x_{i}(t+1) = x_{i}(t+1) \bmod (2L)$;
   
   \ \ \ \ \ \ \ \ \ \ \ \ if $x_{i}(t+1)> L$, then $x_{i}(t+1) = 2-x_{i}(t+1)$.
   \ENDFOR
\end{algorithmic}
\end{algorithm}

\section{Pseudo-spectral Method for Continuum-agent Model} 
\label{Append: Psuedo-Spectral}

This is a modification of the algorithm given in~\cite{Wang17} for our model. 
Let us first recall the PDE~\eqref{pde2} for the continuum-agent model
\begin{eqnarray} \label{PS}
\rho_t = (\rho \ G)_x + \frac{\sigma^2}{2}\rho_{xx}  
\end{eqnarray} 
\noindent where
\begin{equation} \label{g}
G(x,t)= w \star (\rho + M \rho_r) = \int_{\tilde{X}} (x-y) \ \textbf{1}_{|x-y|\leq R} \ (\rho (y,t) + M \rho_r (y)) \ \diff y,
\end{equation}
Using the first $N_f$ terms of Fourier expansions of $\rho$ and $\rho_r$, we can write
\begin{equation*}
\rho(x,t) + M \rho_r(x) = \sum_{k=-N_f}^{N_f} ( \hat{\rho}_{k}(t)+ M \hat{\rho}_{r_{k}} ) \ e^{i\pi kx}.
\end{equation*}
Inserting this into~\eqref{g}, we obtain
\begin{eqnarray*}
G(x,t)&=&\sum_{-N_f \leq k \leq N_f, k \neq 0} - \frac{2iR}{\pi k} \ f_k \ ( \hat{\rho}_{k}(t)+ M \hat{\rho}_{r_{k}} ) \ e^{i\pi kx},
\end{eqnarray*}
where $f_k$ is given by~\eqref{fn}. 
Hence,
\begin{eqnarray*}
\hat{G}_{k}(t) = 
\left\{
\begin{array}{ll}
- \frac{2iR}{\pi k} \ f_k \ ( \hat{\rho}_{k}(t)+ M \hat{\rho}_{r_{k}} ),  &k\neq 0 \\
0, &k=0.
\end{array}
\right.
\end{eqnarray*}
With Fourier coefficients of $G$ in terms of Fourier coefficients of $\rho$ in hand, we can apply pseudo-spectral method for solving~\eqref{PS} as described in Algorithm~\ref{PSAl}. As shown, the multiplication $h = \rho \ G$ on the r.h.s. of~\eqref{PS} is performed in the time domain (Step 4), while the differentiations w.r.t. $x$ are performed in frequency domain (Step 5). Note that the symmetric nature of solution is preserved in the algorithm (Step 1). Also, preservation of mass is satisfied by setting $\hat{\rho}_{0}(t+1) = \hat{\rho}_{0}(t)$ (Step 5). 
We also note that the algorithm is semi-explicit (see the first equation in Step 5). 

\begin{algorithm}
   \caption{Pseudo-spectral method for~\eqref{PS}}
   \label{PSAl}
\begin{algorithmic}
	\STATE {\bfseries Step 0.} for $x \in [-1,0]$ set $\rho_r(x) = \rho_r(-x)$; \\
	\ \ \ \ \ \ \ \ \ \ \ \  $\hat{\rho}_{r_{k}} = \text{FFT} \left[ \rho_r(x) \right]$;
	\FOR{$t=0$ {\bfseries to} $t = \frac{T}{\Delta t}-1$:}   
   \STATE {\bfseries Step 1.} for $x \in [-1,0]$ set $\rho(x,t) = \rho(-x,t)$;
   \STATE {\bfseries Step 2.} $\hat{\rho}_k(t) = \text{FFT} \left[ \rho(x,t) \right]$;
   \STATE {\bfseries Step 3.} $\hat{G}_k(t) = - \frac{2iR}{\pi k} \ f_k \ ( \hat{\rho}_{k}(t)+ M \hat{\rho}_{r_{k}} ), \ \hat{G}_0(t) = 0$; \\
\ \ \ \ \ \ \ \ \ \ \ \  $G(x,t) = \text{iFFT}\left[\hat{G}_k(t) \right]$;
   \STATE {\bfseries Step 4.} $h(x,t) = \rho(x,t) \ G(x,t) $; \\
\ \ \ \ \ \ \ \ \ \ \ \  $\hat{h}_k(t) = \text{FFT} \left[ h(x,t) \right]$;
   \STATE {\bfseries Step 5.} $\hat{\rho}_{k}(t+1) = \left(i\pi k \hat{h}_k(t) - \frac{\pi^2 \sigma^2 k^2}{2}\hat{\rho}_{k}(t+1) \right) \cdot \Delta t + \hat{\rho}_{k}(t) $; \\
\ \ \ \ \ \ \ \ \ \ \ \  $\hat{\rho}_{0}(t+1) = \hat{\rho}_{0}(t)$; \\
\ \ \ \ \ \ \ \ \ \ \ \  $\rho(x,t) = \text{iFFT}\left[\hat{\rho}_k(t) \right]$;
   \ENDFOR
\end{algorithmic}
\end{algorithm}

	\bibliographystyle{IEEEtr} 
	\bibliography{ref}

\end{document}